\documentclass[11pt,oneside]{article}  
\usepackage{subcaption}
\usepackage{amsmath,amsthm,graphicx,amssymb,enumerate,color,bbm,rotating,tikz,authblk,romannum,array,enumitem,dsfont,booktabs,multirow,mathtools,float}
\usepackage{mathtools}
\usepackage{algorithm}
\usepackage{algpseudocode}
\usepackage{caption} 
\usepackage{pgfplots}
\usepackage{diagbox}
\usepackage[pagewise]{lineno}
\usepackage[mathscr]{euscript}
\usepackage[top=1in, bottom=1in, left=1in, right=1in]{geometry}
\geometry{letterpaper} 
\usetikzlibrary{chains,shapes.multipart,shapes,calc,fit}
\usepackage[percent]{overpic}
\usetikzlibrary{automata,positioning}
\setlength{\jot}{10pt}
\usepackage[bottom]{footmisc}
\usepackage[hidelinks=true]{hyperref}
\usepackage{setspace}\doublespacing
\usepackage[T1]{fontenc}
\usepackage[utf8]{inputenc}
\usepackage[para,online,flushleft]{threeparttable}
\usepackage[all]{xy}
\urlstyle{same}
\setlength{\parskip}{0.5em}
\addtolength{\skip\footins}{0.3pc plus 0pt}

\usepackage{natbib}
 \bibpunct[, ]{(}{)}{,}{a}{}{,}%

\DeclareMathAlphabet{\mathpzc}{OT1}{pzc}{m}{it}
\DeclarePairedDelimiter{\norm}{\lVert}{\rVert}

\newtheoremstyle{ModifiedStyle}
	{\topsep} 
	{3pt} 
	{} 
	{} 
	{\bfseries} 
	{.} 
	{.5em} 
	{} 
\theoremstyle{ModifiedStyle}

\newtheorem{proposition}{Proposition}

\newtheorem{definition}{Definition}
\newtheorem{remark}{Remark}

\newtheorem{to-do}{To-Do}

\newcommand{\E}{\mathbb{E}}
\newcommand{\Var}{{\rm Var}}

\newcommand{\BSmall}[1]{\mkern-1.7mu\raisebox{-1.2pt}{\scalebox{0.9}{$\scriptscriptstyle B$}}}
\newcommand{\NSmall}[1]{\mkern-1.7mu\raisebox{-1.2pt}{\scalebox{0.9}{$\scriptscriptstyle N$}}}

\newcommand{\nquad}{\kern-1em}


\makeatletter
\newcommand{\Rom}[1]{\expandafter\@slowromancap\romannumeral #1@}
\newcommand{\Biggg}{\bBigg@{2.5}}
\newcommand{\vast}{\bBigg@{3}}
\newcommand{\Vast}{\bBigg@{3.5}}
\newcommand{\massive}{\bBigg@{4.5}}
\newcommand{\Massive}{\bBigg@{6}}
\newcommand{\thickhline}{%
	\noalign {\ifnum 0=`}\fi \hrule height 1pt
	\futurelet \reserved@a \@xhline}
\newcolumntype{"}{@{\hskip\tabcolsep\vrule width 1pt\hskip\tabcolsep}}

\newcommand{\PreserveBackslash}[1]{\let\temp=\\#1\let\\=\temp}
\newcolumntype{C}[1]{>{\PreserveBackslash\centering}p{#1}}
\newcolumntype{R}[1]{>{\PreserveBackslash\raggedleft}p{#1}}
\newcolumntype{L}[1]{>{\PreserveBackslash\raggedright}p{#1}}
\makeatother

\newcommand{\Subroutine}[1]{\textproc{#1}}


\title{{\bf Analysis and Improvement of Eviction Enforcement}}
\author[1]{Barış Ata}
\author[2]{Yuwei Zhou}
\affil[1]{Booth School of Business, The University of Chicago}
\affil[2]{Kelley School of Business, Indiana University}


\date{\vspace{-7mm} \today}


\makeatletter
\let\LN@align\align
\let\LN@endalign\endalign
\renewcommand{\align}{\linenomath\LN@align}
\renewcommand{\endalign}{\LN@endalign\endlinenomath}
\let\LN@gather\gather
\let\LN@endgather\endgather
\renewcommand{\gather}{\linenomath\LN@gather}
\renewcommand{\endgather}{\LN@endgather\endlinenomath}
\makeatother

\begin{document}

\maketitle

\pagenumbering{arabic}

\allowdisplaybreaks

\singlespacing
\begin{abstract}
 Each year, nearly 13,000 eviction orders are issued in Cook County, Illinois. While most of these orders have an enforcement deadline, a portion does not. The Cook County Sheriff’s Office (CCSO) is responsible for enforcing these orders, which involves selecting the orders to prioritize and planning daily enforcement routes. This task presents a challenge: balancing ``equity'' (i.e., prioritizing orders that have been waiting longer) with ``efficiency'' (i.e., maximizing the number of orders served). Although the current CCSO policy is highly efficient, a significant fraction of eviction orders miss their deadline.

Motivated by the CCSO’s operations, we study a model of eviction enforcement planning and propose a policy that dynamically prioritizes orders based on their type (deadline or no deadline), location, and waiting time. Our approach employs a budgeted prize-collecting vehicle routing problem (VRP) for daily planning, where the ``prizes'' are determined by solving a stochastic control problem. This stochastic control problem, which relies on the VRP for determining feasible actions at each decision point, is high-dimensional due to its spatial nature, leading to the curse of dimensionality. We overcome this challenge by building on recent advances in high-dimensional stochastic control using deep neural networks.

We compare the performance of our proposed policy with two practical benchmark policies, including one that mimics the current CCSO policy, using data from CCSO. Similar to the CCSO policy, our proposed policy leads to efficient resource utilization, but it also reduces the percentage of orders that miss their deadline by 72.38\% without degrading the overall service effort for either type of orders. In a counterfactual study, we show that increasing the service capacity or extending the enforcement deadline further reduces the fraction of orders missing their deadline. 

  \vspace{12pt}

    \noindent {\em Keywords:} High-dimensional stochastic control, Queueing, Vehicle routing problem, Eviction enforcement, Deep learning
\end{abstract}

\doublespacing

\section{Introduction}
\label{sec:Introduction}

In the United states, there are about $3.6$ million eviction filings every year (\citet{Gromis2022}). Eviction orders are issued after a legal process by which a landlord removes his tenant from the rental property usually due to a breach of rental contract such as not paying rent or violating other terms of the lease. Once the presiding judge orders an eviction, the eviction order is forwarded for enforcement, typically to the sheriff's department of the county. In Cook County, Illinois, the second largest county in the United States, Cook County Sheriff's Office (CCSO) is responsible for enforcing eviction orders. CCSO receives about $13,000$ eviction orders per year, resulting in a large and complex operation. CCSO's eviction enforcement operations motivates our work, and it is the focus of our computational study in Section \ref{sec:Simulation}.

The eviction enforcement operations primarily involve two key operational questions: Given the outstanding eviction orders, on each day \textit{i}) which orders should each eviction team of sheriff's deputies enforce? and \textit{ii}) what is the best route for each eviction team? Eviction orders arrive to the system randomly over time. In Cook County, about 83\% of them have a deadline by which they must be enforced, whereas the rest have no deadline. Roughly speaking, eviction orders against individuals usually have a deadline whereas eviction orders against property do not have a deadline \citep{PeterPon2023}. The deadline for serving the orders is typically 120 calendar days. Although there are provisions to extend this deadline, such exceptions require additional processing time and resources. Without such exceptions, it is illegal to serve an eviction order once its deadline passes. In our analysis, we focus attention on the initial 120-day period and refer to eviction orders that are not executed or canceled during that period as missing their deadline. In what follows, we calibrate our model using the CCSO data to reflect this. As such, the performance metrics we report below should be interpreted with our focus on the 120-day period in mind. On each day, a system manager reviews the outstanding eviction orders and their deadlines on each day to decide which eviction orders each team should enforce as well as which routes they should follow. The eviction orders that are not enforced remain in the system, moving closer to their deadline. In addition, some eviction orders may be canceled as they wait in the system to be enforced.

The spatial and uncertain nature of eviction orders' arrivals, and the dynamic nature of the system makes the scheduling problem challenging.  On the one hand, it may seem reasonable to follow the first-come-first-served (FCFS) rule to schedule the enforcement of eviction orders for the purpose of equity, that is, prioritizing the eviction orders that have been waiting the longest. However, this may not be efficient if those orders are far away from each other, because that would increase the travel time for eviction teams. On the other hand, the system manager can focus on efficiency by prioritizing nearby orders or serves a region once the number of orders exceeds a certain threshold. 
Currently, the CCSO is highly efficient in resource utilization and generally treats orders with and without deadlines similarly. However, 18.83\% of the eviction orders miss their deadlines. 

Designing the routes of the eviction teams on a given day can be viewed as a vehicle routing problem (VRP), but our problem has additional features. First, each team can only work for a certain number of hours, implying an upper bound on the total travel time of each eviction team per day. More importantly, our formulation involves a sequence of VRPs, one for each day; and these VRPs interact with each other through the evolution of the system state, that is, the outstanding eviction orders. To be specific, a certain ``shadow price'' of the system state should be incorporated into each VRP to account for the impact of daily decisions on the future evolution of the system state in order to avoid those decisions from being myopic. To address these concerns formally, we assign a prize for each eviction order that are derived from a stochastic control problem. Given these prizes, the daily decisions can be viewed as solving a Team Orienteering Problem (TOP) or a Budgeted Prize-Collecting VRP whose objective is to maximize the total prize collected. A discussion of the literature related to VRP and its variants is provided below. As mentioned earlier, these prizes (or the shadow prices) are derived from a stochastic control problem, i.e., the Brownian control problem formulated in Section \ref{sec:BackgroundAndModel}, using the gradient of its value function. In turn, the Brownian control problem incorporates the budgeted prize-collecting VRPs for determining the set of feasible actions. 

To be more specific, the Brownian control problem we formulate is one of drift rate control for Reflected Brownian Motion (RBM). However, due to the spatial nature of our application, its state descriptor is high dimensional. For example, the computational study based on CCSO's eviction enforcement operations leads to a 24 dimensional problem; see Section \ref{sec:Simulation}. Due to the curse of dimensionality, such problems are considered intractable computationally at least by historical standards. Nonetheless, we develop a simulation-based computational method that relies on the deep neural network technology to solve our problem in high dimensions. To do so, we first express the problem analytically by considering the associated Hamilton–Jacobi–Bellman (HJB) equation; see \citet{FlemingSoner2006}. The HJB equation is a semilinear elliptic partial differential equation (PDE); see \citet{GilbargTrudinger2001}. To solve the HJB equation, we build on the seminal work of \citet{Han2018}, who develop a method for solving semilinear parabolic PDEs.

Using our computational method, we propose a policy in Section \ref{sec:ComputationalMethod} and test its performance against two natural benchmarks; see Section \ref{sec:Simulation}. One of these policies is designed to replicate the CCSO policy in effect during the data collection period from 2014 to 2019. Our proposed policy outperforms the benchmarks by reducing the percentage of the eviction orders that miss their deadline by 72.38\%. It also achieves a significant reduction in terms of the number of pending eviction orders compared with the benchmarks we consider. The average number of eviction orders served by the proposed policy is close to that observed in the CCSO dataset and to that served by a benchmark policy mimics the current CCSO practice, whereas the number of eviction orders served daily by the FCFS benchmark is significantly lower.

\paragraph{The remainder of this paper.} Section \ref{sec:Literature} includes the relevant literature. Section \ref{sec:BackgroundAndModel} presents our model. In Section \ref{sec:HJB}, we present the associated HJB equation and identify an equivalent stochastic differential equation (SDE) that is used to identify the proposed policy. Section \ref{sec:ComputationalMethod} presents the computational method that we use the solve the control problem. Section \ref{sec:BenchmarkPolicies} presents our proposed policy and the benchmark policies. Section \ref{sec:Simulation} demonstrates the effectiveness of our proposed policy through a simulation study that is calibrated using the CCSO data as well as the counterfactual analysis with increased service capacity and the extended enforcement deadline.
Appendices present the auxiliary algorithms we use, a detailed discussion of our proposed policy and benchmark policies, the further details of the dataset, a discussion on the calibration for one of the benchmark policies, details on the computation of performance measures through simulation, and implementation details of our computational study. 

\section{Literature Review} 
\label{sec:Literature}

Our work is related to four different streams of literature: \textit{i}) study of evictions in the sociology the literature, \textit{ii}) vehicle routing problems in combinatorial optimization, \textit{iii}) drift rate control problems in applied probability; and \textit{iv}) computational methods for solving high-dimensional PDEs in applied mathematics and physics. 

\paragraph{Evictions in the sociology literature.}
Researchers have explored different aspects of evictions in the sociology literature. For example, \citet{Desmond2022} studies consequences of eviction, especially for the low-income communities and propose policy interventions to remedy its negative consequences. \citet{VasquezVera2017} documents effect of threat of eviction on health through a comprehensive survey of the literature and further studies them. The authors conclude that the affected population have worse physical and mental health and the threat of eviction need to be addressed urgently. Several authors explore issues related to evictions through quantitative analysis. For example, \citet{Desmond2012} studies prevalence of and ramifications of evictions of low-income urban neighborhoods. Similarly, \citet{Gromis2022} estimate eviction prevalence in the United States. Our work focuses on making eviction enforcement more efficient. As such, its implementation can make the negative societal consequences of evictions studied in the literature worse. This points to the need for further policy interventions and research studying them.

\paragraph{Vehicle Routing Problem (VRP).}

As a generalization of the TSP \citep{Flood1956}, the VRP was first proposed by \citet{Dantzig1959}. The classical form of the VRP is both static and deterministic, meaning that all the relevant information is known before planning begins; see \citet{TothVigo2002} for a review of VRP and its many variants, and see  \citet{Pessoa2020} and \citet{Yang2024} for some recent state-of-the-art methods to solve such problems. In practice, the static and deterministic assumptions rarely hold. With uncertain parameters, such as stochastic demands (\citet{Bertsimas1992}, \citet{Secomandi2009}, and \citet{Goodson2013}) and stochastic travel times (\citet{Laporte1992}, \citet{Toriello2014}, \citet{Tas2013}, and \citet{Yang2023}), the VRP can be stochastic; see \citet{Gendreau1996} for a review of stochastic VRP. Additionally, when information is gradually revealed through the execution of the routing plan, the VRP becomes dynamic; see \citet{Pillac2013} for a review of dynamic VRP and \citet{Flatberg2007} for examples of dynamic and stochastic VRPs in practice. 
Our focus is on daily decision making with solutions to a sequence of VRPs, determining vehicle routes to serve a subset of current pending eviction orders while also planning for ``future information'', such as new arrivals and cancellations. Each VRP is static in the sense that the system manager has complete information for daily planning, while the sequence of VRPs is interconnects through the dynamic evolution of the system state with daily updates. 

Within the static VRP literature, the most relevant problem is the Team Orienteering Problem, where the objective is to maximize the total prize collected subject to a maximum duration constraint for each route. This problem is also referred to as the budgeted prize-collecting VRP, which we use because it emphasizes the connection to the classical VRP and related logistics applications. Solution methods for such problems are widely studied in the literature, see for example \citet{ElHajj2016}, \citet{Feillet2005b}, and \citet{Baldacci2023}. When only a single vehicle is available, our problem reduces to a variant of the TSP, known as the orienteering problem or the budgeted prize-collecting TSP. Solving the TSP, VRP, or their variants exactly is computationally challenging, especially when the number of customers is large. To the best of our knowledge, among the heuristics and approximation algorithms, \citet{Hegde2015} is the first to present a nearly-linear time algorithm for solving the prize-collecting Steiner tree (PCST) problem. We build on their algorithm, using the approach of \citet{GoemansWilliamson1995}, to propose an efficient solution to the budgeted prize-collecting VRP we consider; see Section \ref{subsec:ApproximateAuxiliaryFunction} and Appendix \ref{app:TrainingFFunction_Appendix} for details.

Researchers have worked on different aspects of dispatch and routing decisions in stochastic and dynamic VRP; see for example, \citet{Azi2012}, \citet{Liu2021}, \citet{Carlsson2024}. Within that literature, a stream of work that is related to ours is the dynamic multiperiod problem. In that problem formulation, dispatch decisions are made for fixed intervals and the information needed for route planning, such as customer arrivals and service time windows, is revealed sequentially over time during the planning horizon (\citet{Angelelli2007a}, \citet{Angelelli2007b}, \citet{Wen2010}, \citet{Keskin2023}). Due to the complexity of the problem, many authors resort to approximate methods, such as approximate dynamic programming, in order to propose effective dynamic dispatch and routing policies. For example, \citet{Klapp2018} focuses on a single-vehicle problem and proposes two approaches for obtaining dynamic policies: a rollout of the apriori policy and an approximate linear program to estimate the cost-to-go function. \citet{Ulmer2019} also considers a single-vehicle routing problem, but combines offline value approximation with online rollout algorithms, resulting in an effective policy and computationally tractable solution approach.
\citet{Voccia2019} and \citet{LiuLuo2023} study dynamic dispatch and routing problems for on-demand delivery with random demand that arrives over a fixed number of periods.
\citet{Voccia2019} presents a Markov decision process model and applies neighborhood search heuristics as a solution subroutine. 
\citet{LiuLuo2023} proposes a structured approximation framework to estimate the value function with a decomposed dispatch and routing policy. The estimation is then embedded into the dynamic program to derive solutions in a rollout fashion.
\citet{AlbaredaSambola2014} considers multiple vehicles, where customers arrive over a discrete time horizon and must be fulfilled within a time window that comprises several time periods. They assume that the probabilities of customer requests in the future and their associated time windows are known. In that setting, they propose a solution to the problem using a prize-collecting VRP model, where prize of each pending request is based on its urgency (measured by the remaining time within its time window) and the probability that neighboring customers will request service within the same time window. 
Both our modeling assumptions and the solution method differ from the aforementioned studies. More specifically, prizes to pending eviction orders based on the cost-to-go function derived from a Brownian control problem formulation (see detailed review in next paragraph) and determine the subset of orders to serve, as well as the route, by solving the budgeted prize-collecting VRP.

\paragraph{Drift rate control problems.}
Brownian control problems are first introduced in \citet{Harrison1988} and later extended in \citet{Harrison2000, Harrison2003}. Starting with \citet{HarrisonWein1989, HarrisonWein1990} and \citet{Wein1990a, Wein1990b}, many authors used them to model manufacturing and service systems. For example, \citet{Reiman1999} considers the inventory-routing problem and model it as a queueing control problem. For shipping to the retailers, the authors consider either a pre-determined TSP tour or direct shipping. By analyzing the problem under heavy traffic assumption, they propose dynamic delivery allocation policy as well as the policy on whether the vehicle should be busy or idle. \citet{Markowitz2000} considers a queueing system scheduling problem that is a stochastic version of the economic lot scheduling problem and proposes dynamic cyclic policy that minimizes the long-run average cost. One class of Brownian models focus on drift rate control problems for RBM. \citet{Ata2005b} considers a drift rate control problem on a bounded interval under a general cost of control but no state costs. The authors solve the problem in closed form and discuss its application to power control problem in wireless communication. \citet{GhoshWeerasinghe2007, GhoshWeerasinghe2010} extend \citet{Ata2005b} in several directions. The authors incorporate state costs, abandonment and optimally choose the interval where the process lives.

Drift rate control problems have been used to study a variety of applications. \citet{Ata2019} studies a dynamic staffing problem for choosing the number of volunteer gleaners, who sign up but may not show up, for harvesting leftover crops donated by farmers for the purpose of feeding food-insecure individuals. Formulating the problem as one of drift rate control, the authors derive an effective policy for dynamic decision making in closed form. \citet{Ata2024a} use a drift rate control model to study volunteer engagement; see also \citet{Ata2024b} for a similar model.
\citet{CelikMaglaras2008} studies a problem of dynamic pricing, lead time quotation, outsourcing and scheduling for a make-to-order production system. They approximate the problem by a drift-rate control problem and solve for the optimal pricing policy in closed form, building on \citet{Ata2005b}; also see \citet{AtaBarjesteh2023} for a related study. Similarly, \citet{Alwan2024} consider dynamic pricing and dispatch control decisions for a ride hailing system using a queueing network model. Under the complete resource pooling assumption, the authors approximate their pricing problem by a one-dimensional drift rate control problem. A common feature of these papers is that the underlying state process is one dimensional. To the best of our knowledge, there has not been any papers studying drift control of RBM in high dimensions except for \citet{Ata2023}. Our paper contributes to this literature by solving the drift rate control of RBM that we use to model the eviction enforcement operations.

\paragraph{Solving high-dimensional PDEs.}
As mentioned above, our method builds on the seminal work of \citet{Han2018} who developed a method for solving semilinear parabolic PDEs. Recent work by \citet{Ata2023} builds on \citet{Han2018} and develops a computational method for solving problems of drift rate control for RBM in the orthant, also see \citet{Ata2023b} for solving singular control problems building on \citet{Ata2023}. 
Those authors considered several test problems of dimension at least up to $30$ and reported near optimal performance. From a methodological perspective, we build on \citet{Ata2023} but modify their method to accommodate the following differences of our setting: \textit{i}) the set of feasible drift rates are defined implicitly through the aforementioned budgeted prize-collecting VRP; see Section \ref{sec:BackgroundAndModel} and \ref{subsec:ApproximateAuxiliaryFunction} and \textit{ii}) our state space is different. Another related paper is the recent working paper \citet{AtaKasikaralar2023} that studies dynamic scheduling of a multiclass queue motivated by call center applications. Focusing on the Halfin-Whitt asymptotic regime, the authors approximate their scheduling problem by a drift rate control problem whose state space is $\mathbb{R}^d$, where $d$ is the number of buffers in their queueing model. Similar to us, those authors build on \citet{Han2018} to solve their (high-dimensional) drift-rate control problem. However, our control problem differs from theirs significantly, because theirs has no state space constraints. There have been many papers on solving PDEs using deep neural networks in the last five years; see \citet{E2021}, \citet{Beck2023} and \citet{Chessari2023} for surveys of that literature.

\section{Model}
\label{sec:BackgroundAndModel}

Two key features of eviction orders affect scheduling and routing decisions of the system manager: their location and whether they have a deadline. According to CCSO data, 83\% of eviction orders have a deadline, typically set at 120 calendar days after the presiding judge approves an enforcement for the eviction order. The remaining 17\% of orders have no deadlines; see Section \ref{sec:Simulation} for further details. 
The location of an eviction order affects the travel time required to enforce (or serve) it, whereas its deadline affects the urgency of the order. We divide the service area into $K$ regions and each eviction order falls into one of those regions. We choose $K$ sufficiently large to ensure the region provides a good proxy for location (see Sections \ref{subsec:ApproximateAuxiliaryFunction} and \ref{subsec:Data} for further details). Within each region, eviction orders are further classified into two types: those with a deadline, or a delay constraint, and those without a deadline. This leads to a $2K$-dimensional state descriptor, denoted by $Z(t)=(Z_1(t), \ldots, Z_K(t), Z_{K+1}(t), \ldots, Z_{2K}(t))'$, where $Z_{k}(t)$ and $Z_{K+k}(t)$ represent the number of outstanding eviction orders in region $k$ with and without a deadline, respectively, for $k=1,\ldots, K$.

We let $A_k(t)$ denote the cumulative number of class $k$ eviction orders that have arrived in the system by time $t$, and assume $A_1, \ldots, A_{2K}$ are mutually independent Gaussian processes. Specially, we assume $A_k(t)$ is a Gaussian random variable with mean $\lambda_k t$ and variance $\sigma_k^2 t$ for $t\geq 0$. In what follows, we consider the evolution of the system in continuous time and allow fractional values for the state descriptor for tractability. Note that the process $\{A_k(t)-\lambda_k t: t\geq 0\}$ is equal in distribution to $\{B_k(t): t\geq 0\}$, where $B_k$ is a driftless Brownian motion with variance parameter $\sigma_k^2$ for $k=1,\ldots, 2K$. It is important to note that our approach does not rely on a heavy traffic analysis, which often leads to similar Brownian control problems. Instead, in our setting, the assumption of Gaussian arrival processes directly leads to the Brownian control problem. Crucially, we relax this assumption in our simulation study. Specifically, while our proposed policy is derived under the Gaussian arrival assumption, we evaluate its performance by bootstrapping daily arrival data from the CCSO dataset. This allows us to test the policy's effectiveness without relying on the Gaussian assumption.

At each time $t\geq 0$, the system manager chooses a vector $\mu(t)\in {\cal A}(Z(t))$, where $\mu_k(t)$ denotes the rate at which class $k$ jobs are served for $k=1,\ldots, 2K$ and ${\cal A}(z)\subset \mathbb{R}_+^{2K}$ denotes the set of feasible service rates as a function of the system state $z$. Vector $\mu$ of service rates can be determined based on the chosen route and the number of eviction orders of different types enforced along that route. The vector $\mu$ is considered feasible if the total travel time combined with the total service time remains within the allotted time budget. The set ${\cal A}(z)$ represents the collection of all such feasible vectors $\mu$. In other words, for each state vector $z$, the set ${\cal A}(z)$ of feasible service rates corresponds to the set of feasible solution for the aforementioned budgeted prize-collecting VRP; see Section \ref{subsec:ApproximateAuxiliaryFunction} for further details and for a discussion of related computations. 

As mentioned earlier, eviction orders in classes 1 through $K$ have a deadline or a delay constraint. Our model assumes a common deadline $d$ for classes $k=1,\ldots, K$. 
Following \citet{Plambeck2001}, we replace the delay constraint with state-space constraints for tractability (see also \citet{Ata2006} and \citet{RubinoAta2009}). To be specific, we let $b_k=\lambda_k d$ and require that $Z_k(t)\leq b_k$ for $k=1,\ldots, K$ and $t\geq 0$, which leads to the state space \vspace{-0.5cm}
\begin{align}
\label{eqn:StateSpace}
    S=\prod_{k=1}^K \left[0, b_k\right] \times \mathbb{R}_+^K,
\end{align} and we rewrite the state-space constraint as 
\vspace{-0.5cm}
\begin{align}
\label{eqn:StateSpaceConstraint}
    Z(t)\in S \quad \text{for } t\geq 0.
\end{align}
\vspace{-1.3cm}

We let $U_k(t)$ denote the cumulative number of class $k$ eviction orders that have missed their deadline during $[0, t]$ for $k=1,\ldots, K$ and $t\geq 0$. According to CCSO data, among those eviction orders with a deadline, 18.83\% of them miss their deadline. We associate a penalty $p$ for each eviction order that misses its deadline. Intuitively, the penalty parameter $p$ reflects the undesirability of not executing the eviction order within the deadline. In addition, while an eviction order is waiting to be served, the plaintiff may cancel it. In the CCSO data, about 25.8\% of eviction orders are canceled. We approximate the instantaneous cancellation rate by $\gamma_k Z_k(t)$ for class $k$ ($k=1,\ldots,2K$) and $t\geq 0$; see Section \ref{sec:Simulation} for estimation of cancellation rate parameters $\gamma_k$ for $k=1,\ldots, 2K$.

Focusing attention on stationary Markov controls, i.e., $\mu(t)=\mu(Z(t))$, the evolution of the system state can be described as follows: For classes with a deadline ($k=1,\ldots,K$), we have 
\begin{align}
\label{eqn:StateDynamics_WithDeadline}
    Z_k(t) &= Z_k(0)+\lambda_k t+B_k(t)-\int_{0}^t \gamma_k Z_k(s)ds-\int_0^t \mu_k(Z_k(s))ds +L_k(t)-U_k(t), \quad t\geq 0. 
\end{align}
Similarly, for classes without a deadline ($k=K+1,\ldots,2K$), we have
\begin{align}
\label{eqn:StateDynamics_WithoutDeadline}
    Z_k(t) &= Z_k(0)+\lambda_k t+B_k(t)-\int_{0}^t \gamma_k Z_k(s)ds-\int_0^t \mu_k(Z_k(s))ds +L_k(t).
\end{align}
In other words, the state process $Z$ is modeled as a Reflected Brownian Motion (RBM) with state-dependent and controllable drift; see \citet{HarrisonReiman1981}.

The processes $L$ and $U$ are pushing processes at the state boundary to ensure the state process $Z$ lives in the state space $S$. The pushing process $L$ is usually interpreted as the cumulative unused service capacity; see \citet{Harrison2013}. Similarly, $U_k(t)$ models the cumulative number of class $k$ eviction orders that have missed their deadline for $k=1,\ldots, K$. They further satisfy the following conditions: 
\vspace{-0.5cm}
\begin{align}
    & L \text{ is non-decreasing, non-anticipating with } L(0)=0, \label{eqn:L}   \\
    & U \text{ is non-decreasing, non-anticipating with } U(0)=0, \label{eqn:U}  \\
    & \int_{0}^t Z_k (s) d L_k(s)= 0, \quad k=1,\ldots, 2K, \quad t\geq 0, \label{eqn:LowerPushingConstraint}  \\
    & \int_0^t \left( b_k - Z_k(s) \right) dU_k(s) = 0, \quad k=1, \ldots, K,  \quad \hspace{0.2cm} t\geq 0, \label{eqn:UpperPushingConstraint}
\end{align} 
which ensure that $L$ and $U$ are minimal processes that keep the state process within the state space. 

The system manager's control $\mu(Z(t))$ corresponds to the vector of service rates at which eviction orders of various classes are enforced at time $t$. The integrals involving $\mu_k(Z_k(t))$ in Equations (\ref{eqn:StateDynamics_WithDeadline})-(\ref{eqn:StateDynamics_WithoutDeadline}) give the cumulative number of class $k$ eviction orders enforced until time $t$. The system manager seeks to choose a policy $\mu(z) \in {\cal A}(z)$ for $z\in S$ so as to 
\begin{align}
    &\text{minimize} \quad \overline{\lim}_{T\rightarrow\infty} \frac{1}{T} \mathbb{E}\left[ \int_0^T \sum_{k=1}^{2K} c_k \left(Z_k(t)\right)dt+p\sum_{k=1}^{K} U_k(T) \right]  \label{eqn:Objective}  \\
    &\text{subject to} \quad (\ref{eqn:StateSpaceConstraint})-(\ref{eqn:UpperPushingConstraint}), \label{eqn:Constraints}
\end{align} 
where the holding cost function $c_k(\cdot)$ expresses the congestion concerns for and the relative urgency of class $k$ ($k=1,\ldots, 2K$). Similarly, the second term in the objective accounts for the penalty incurred for missed deadlines,  as eviction orders cannot be served after their deadline has passed.

In our numerical example, calibrated by the CCSO data, we restrict attention to the following linear holding costs for simplicity: 
\vspace{-0.5cm}
\begin{align*}
    c_k(x) &= \Bigg\{
    \begin{array}{ll}
        c_1 x, & \quad k=1,\ldots, K, \\
        c_2 x, & \quad k=K+1,\ldots, 2K.
    \end{array}
    \Bigg.
\end{align*}
Under appropriate scaling, the linear holding cost with $c_1=c_2$ corresponds to minimizing the total number of pending eviction orders plus the penalty for missing their deadlines. 

Crucially, we do not include a penalty for canceled eviction orders, because cancellations typically result from the parties mediating a solution that averts eviction.

\section{HJB equation and an equivalent SDE} 
\label{sec:HJB}

As a means to characterize the optimal policy, we consider the HJB equation: Find a constant $\beta$ and a sufficiently smooth function $V$ with polynomial growth that jointly satisfy the following:
\begin{align}
\label{eqn:HJB}
	\beta &= {\cal L} V(z)- \max_{\mu\in \mathcal{A}(z)} \left\{\mu \cdot \nabla V(z) \right\}+\sum_{k=1}^{2K} c_k(z_k), \quad z\in S, 
\end{align} subject to the boundary conditions 
\begin{align}
    \frac{\partial V(z)}{\partial z_k} & = 0 \quad \text{ if } z_k=0 \quad (k=1,\ldots, 2K), \label{eqn:HJB_Boundary1} \\
    \frac{\partial V(z)}{\partial z_k} &= p \quad \text{ if } z_k=b_k \quad (k=1,\ldots, K), \label{eqn:HJB_Boundary2} \\
    \lim_{z_k\rightarrow\infty} \frac{\partial V(z)}{\partial z_k} &= \frac{c_2}{\gamma_k} \quad (k=K+1,\ldots, 2K),  \label{eqn:HJB_Boundary3}
\end{align}
where the generator ${\cal L}$ is given as follows: 
\begin{align}
{\cal L} V(z) &= \frac{1}{2} \sum_{k=1}^{2K} \sigma_{k}^2 \frac{\partial^2 V(z)}{\partial z_k^2} + \sum_{k=1}^K \left(\lambda_k-\gamma_k z_k\right) \frac{\partial V(z)}{\partial z_k}. 
\end{align}
Here, one interprets $\beta$ as a guess at the minimum average cost, and the unknown function $V$ is often called the relative value function. 

Next, we derive a key identity, given by Equation (\ref{eqn:Identity}), which provides an equivalent characterization of the value function. In Section \ref{sec:ComputationalMethod}, this identity is used to define the loss function used in our computational method. We begin with a \textit{reference policy} to generate sample paths of the system state. Loosely speaking, we aim to choose a reference policy so that its paths tend to occupy the paths of the state space that we expect the optimal policy to visit frequently. We denote our reference policy by $\tilde{\mu}$. In the computational study, we randomly generate $\tilde{\mu}_k$ for class $k$ to keep the system state (i.e., the number of pending eviction orders) at low, medium, and high levels; see Section \ref{sec:ComputationalMethod} for further details. 

The corresponding reference process, denoted by $(\tilde{Z}, \tilde{L}, \tilde{U})$, satisfy the following: For $t\geq 0$,
\begin{alignat*}{2}
    &\tilde{Z}_k(t) =  \tilde{Z}_k(0)+\left(\lambda_k-\tilde{\mu}_k\right) t+B_k(t)-\int_{0}^t \gamma_k \tilde{Z}_k(s)ds +\tilde{L}_k(t)-\tilde{U}_k(t),&& \quad k=1,\ldots, K, \\
    &\tilde{Z}_k(t) = \tilde{Z}_k(0)+\left(\lambda_k-\tilde{\mu}_k\right) t+B_k(t)-\int_{0}^t \gamma_k \tilde{Z}_k(s)ds+\tilde{L}_k(t), && \quad k=K+1,\ldots, 2K,   \\
    &\int_0^t \tilde{Z}_k(s) d\tilde{L}_k(s) = 0, \quad k=1,\ldots, 2K,  \\
    &\int_0^t (b_k - \tilde{Z}_k(s) ) d\tilde{U}_k (s) = 0, \quad k=1,\ldots, K, \\
    & \tilde{Z}(t) \in S,   \\
    & \tilde{L} \text{ is non-decreasing, non-anticipating with } \tilde{L}(0)=0, \\
    & \tilde{U} \text{ is non-decreasing, non-anticipating with } \tilde{U}(0)=0.
\end{alignat*}
To facilitate the definition of the key identity, let 
\begin{align}
    \label{eqn:AuxiliaryFunction}
    F(z, v) &= \max_{\mu\in\mathcal{A}(z)} \left\{ \mu \cdot v \right\}-\sum_{k=1}^{2K} c_k(z_k), \quad z\in S, \; v\in \mathbb{R}_+^{2K}. 
\end{align}

\begin{proposition}
\label{prop:EquivalentSDE}
    If $V(\cdot)$ and $\beta$ solve the HJB equation, then the following holds almost surely for $T>0$,
    \begin{align}
        \begin{split}
            V(\tilde{Z}(T))-V(\tilde{Z}(0)) &= \int_0^T \nabla V (\tilde{Z}(t)) dB(t) + \beta T - p \sum_{k=1}^K U_k(T) \\ 
            &\quad -\int_0^T\tilde{\mu}\cdot\nabla V(\tilde{Z}(t)) dt+\int_0^T F (\tilde{Z}(t), \nabla V(\tilde{Z}(t)) ) dt.  \label{eqn:Identity}
        \end{split}
    \end{align}
\end{proposition}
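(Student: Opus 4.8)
The plan is to apply It\^o's formula to $t \mapsto V(\tilde Z(t))$ along the reference path, eliminate the boundary (reflection) contributions using the Neumann-type conditions \eqref{eqn:HJB_Boundary1}--\eqref{eqn:HJB_Boundary2} together with the minimality (complementarity) conditions satisfied by $\tilde L$ and $\tilde U$, and finally substitute the HJB equation \eqref{eqn:HJB} to recognize the auxiliary function $F$ of \eqref{eqn:AuxiliaryFunction}.

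First, since the HJB equation is assumed to admit a solution $V$ that is sufficiently smooth (in particular $C^2$) with polynomial growth, and $\tilde Z$ is a continuous semimartingale on $S$ whose decomposition is read directly off the reference dynamics --- a Brownian part $B$ with $\langle B_k \rangle_t = \sigma_k^2 t$, an absolutely continuous drift of density $\lambda_k - \tilde\mu_k - \gamma_k \tilde Z_k(t)$, and finite-variation reflection terms $d\tilde L_k$ and $-\,d\tilde U_k$ --- the change-of-variables formula gives, almost surely for each fixed $T > 0$,
\[
V(\tilde Z(T)) - V(\tilde Z(0)) = \int_0^T \nabla V(\tilde Z(t)) \cdot d\tilde Z(t) + \frac{1}{2} \int_0^T \sum_{k=1}^{2K} \sigma_k^2\, \frac{\partial^2 V}{\partial z_k^2}(\tilde Z(t))\, dt .
\]
Expanding $d\tilde Z(t)$ with the reference SDE splits the first integral into: (i) the stochastic integral $\int_0^T \nabla V(\tilde Z(t))\, dB(t)$, a well-defined continuous local martingale since $\nabla V$ is continuous and hence bounded on the (a.s.\ compact) path $\tilde Z([0,T])$; (ii) a Lebesgue integral with integrand $\sum_k (\lambda_k - \gamma_k \tilde Z_k(t))\,\partial_{z_k} V(\tilde Z(t)) - \tilde\mu \cdot \nabla V(\tilde Z(t))$; and (iii) the reflection integrals $\int_0^T \nabla V(\tilde Z(t)) \cdot d\tilde L(t) - \sum_{k=1}^K \int_0^T \partial_{z_k} V(\tilde Z(t))\, d\tilde U_k(t)$.

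Next I would dispose of the reflection integrals. By complementarity, $\tilde L_k$ increases only on $\{t : \tilde Z_k(t) = 0\}$, where $\partial_{z_k} V = 0$ by \eqref{eqn:HJB_Boundary1}, so the $d\tilde L$-integral vanishes term by term; and for $k \le K$, $\tilde U_k$ increases only on $\{t : \tilde Z_k(t) = b_k\}$, where $\partial_{z_k} V = p$ by \eqref{eqn:HJB_Boundary2}, so $\int_0^T \partial_{z_k} V(\tilde Z(t))\, d\tilde U_k(t) = p\, \tilde U_k(T)$, and summing over $k$ yields the $-\,p \sum_{k=1}^K \tilde U_k(T)$ term of \eqref{eqn:Identity}. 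The surviving first- and second-order integrands assemble into $\int_0^T \mathcal L V(\tilde Z(t))\, dt - \int_0^T \tilde\mu \cdot \nabla V(\tilde Z(t))\, dt$. Finally, rearranging \eqref{eqn:HJB} gives $\max_{\mu \in \mathcal A(z)} \{\mu \cdot \nabla V(z)\} - \sum_{k} c_k(z_k) = \mathcal L V(z) - \beta$, that is $F(z, \nabla V(z)) = \mathcal L V(z) - \beta$ for all $z \in S$ by \eqref{eqn:AuxiliaryFunction}; substituting $\mathcal L V(\tilde Z(t)) = F(\tilde Z(t), \nabla V(\tilde Z(t))) + \beta$ and collecting the pieces reproduces \eqref{eqn:Identity}.

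The only step requiring genuine care is justifying It\^o's formula up to the boundary of the constrained state space $S = \prod_{k=1}^K [0, b_k] \times \mathbb R_+^K$ and controlling the reflection terms on its faces and corners. This reduces to the standard pathwise change-of-variables formula for a $C^2$ function of a continuous semimartingale, with the complementarity conditions doing the essential work of confining the supports of $d\tilde L_k$ and $d\tilde U_k$ to precisely the faces where the matching condition on $\nabla V$ is active, so no new estimate is needed. Since the assertion is only almost sure --- not an identity in expectation --- no integrability or uniform-integrability bounds on $V$, $\nabla V$, or $\tilde Z$ are required; path continuity and smoothness of $V$ suffice. (To take expectations later, e.g.\ to turn \eqref{eqn:Identity} into the loss functional of Section \ref{sec:ComputationalMethod}, one would additionally invoke moment bounds on $\tilde Z$ --- available from the boundedness of its first $K$ coordinates and the mean-reverting drift of the remaining ones --- in order to make the stochastic integral a true martingale and to ensure $\mathbb E[\tilde U_k(T)] < \infty$.)
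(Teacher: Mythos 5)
Your proposal is correct and follows essentially the same route as the paper: apply It\^o's formula to $V(\tilde Z(t))$, kill the $d\tilde L$ integral and convert the $d\tilde U$ integral to $p\sum_k \tilde U_k(T)$ via the complementarity conditions together with the boundary conditions \eqref{eqn:HJB_Boundary1}--\eqref{eqn:HJB_Boundary2}, and substitute ${\cal L}V = \beta + F(\cdot,\nabla V)$ from \eqref{eqn:HJB} and \eqref{eqn:AuxiliaryFunction}. Your closing remarks on why only pathwise (not expectation-level) justification is needed are a sensible addition but not required.
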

\begin{proof}
    Applying Ito’s formula to $V(\tilde{Z}(t))$ yields 
    \begin{align}
        \begin{split}
            V(\tilde{Z}(T))-V(\tilde{Z}(0)) &= \int_0^T \sum_{k=1}^{2K}\frac{\partial V}{\partial z_k}(\tilde{Z}(t)) dB_k(t)+\frac{1}{2}\int_0^T \sum_{k=1}^{2K} \sigma_k^2\frac{\partial^2 V}{\partial z_k^2} (\tilde{Z}(t)) dt  \\
            &\quad + \int_0^T \sum_{k=1}^{2K}\frac{\partial V}{\partial z_k}(\tilde{Z}(t)) \left(\lambda_k-\gamma_k \tilde{Z}_k(t)-\tilde{\mu}_k \right) dt \\ 
            &\quad +\int_0^T \sum_{k=1}^{2K}\frac{\partial V}{\partial z_k}(\tilde{Z}(t))   d\tilde{L}_k(t)-\int_0^T \sum_{k=1}^{K}\frac{\partial V}{\partial z_k}(\tilde{Z}(t)) d\tilde{U}_k(t).  
        \end{split}
    \end{align}
    It follows from HJB equation in (\ref{eqn:HJB})-(\ref{eqn:HJB_Boundary2}) and (\ref{eqn:AuxiliaryFunction}) that 
    \begin{align}
            V(\tilde{Z}(T))-V(\tilde{Z}(0)) &= \int_0^T \sum_{k=1}^{2K}\frac{\partial V}{\partial z_k}(\tilde{Z}(t)) dB_k(t)
            + \int_0^T \left[{\cal L}V(\tilde{Z}(t))- \tilde{\mu}\cdot\nabla V(\tilde{Z}(t))\right]dt - p\sum_{k=1}^{K}\tilde{U}_k(T) \nonumber \\
            &= \int_0^T \sum_{k=1}^{2K}\frac{\partial V}{\partial z_k}(\tilde{Z}(t)) dB_k(t) + \int_0^T \left[\beta + F(\tilde{Z}(t), \nabla V(\tilde{Z}(t))-\tilde{\mu}\cdot\nabla V(\tilde{Z}(t))  \right] dt \nonumber \\
            &\quad - p\sum_{k=1}^{K}\tilde{U}_k(T),
    \end{align}
    from which (\ref{eqn:Identity}) follows by rearranging the terms.
\end{proof}

\begin{proposition}
\label{prop:EquivalentSDE2}
    Suppose that $V:S\rightarrow \mathbb{R}$ is a ${\cal C}^2$ function, $G:S\rightarrow \mathbb{R}$ is continuous and $V, \nabla V, G$ all have polynomial growth. Also assume that the following identity holds almost surely for some fixed $T>0$, a scalar $\beta$ and every $Z(0)=z\in S$: 
    \begin{align}
        \begin{split}
        V(\tilde{Z}(T))-V(\tilde{Z}(0)) &= \int_0^T G( \tilde{Z}(t))\cdot dB(t)+\beta T- p \sum_{k=1}^{K} U_k(T) \\
        &\quad -\int_0^T \tilde{\mu}\cdot G(\tilde{Z}(t))dt+\int_0^T F(\tilde{Z}(t), G(\tilde{Z}(t))) dt. \label{eqn:Proposition2}
        \end{split}
    \end{align}
    Then, $G(\cdot)=\nabla V(\cdot)$ and $(V, \beta)$ solve the HJB equation. 
\end{proposition}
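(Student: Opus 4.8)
The plan is to run the argument behind Proposition~\ref{prop:EquivalentSDE} in reverse. First I would apply It\^o's formula to $V(\tilde Z(t))$ — legitimate since $V\in{\cal C}^2$ has polynomial growth and $\tilde Z$ has moments of all orders — exactly as in the proof of Proposition~\ref{prop:EquivalentSDE}, but \emph{without} invoking any boundary condition, so that the pushing terms are retained in full generality and the drift appears as ${\cal L}V(\tilde Z)-\tilde\mu\cdot\nabla V(\tilde Z)$. Subtracting this expansion from the hypothesized identity~(\ref{eqn:Proposition2}), writing $\beta T=\int_0^T\beta\,dt$, $-p\sum_{k=1}^{K}\tilde U_k(T)=-p\sum_{k=1}^{K}\int_0^T d\tilde U_k(t)$, and $-\tilde\mu\cdot G+\tilde\mu\cdot\nabla V=-\tilde\mu\cdot(G-\nabla V)$, yields, for the fixed $T$ and every $Z(0)=z\in S$,
\begin{align*}
0={}&\int_0^T\big(G-\nabla V\big)(\tilde Z(t))\cdot dB(t)+\int_0^T\Big[\beta-{\cal L}V(\tilde Z(t))-\tilde\mu\cdot(G-\nabla V)(\tilde Z(t))+F\big(\tilde Z(t),G(\tilde Z(t))\big)\Big]\,dt\\
&-\sum_{k=1}^{2K}\int_0^T\frac{\partial V}{\partial z_k}(\tilde Z(t))\,d\tilde L_k(t)+\sum_{k=1}^{K}\int_0^T\Big(\frac{\partial V}{\partial z_k}(\tilde Z(t))-p\Big)\,d\tilde U_k(t).
\end{align*}
Write $X^z_t$ for the right-hand side with the upper limits $T$ replaced by $t$: it is a continuous semimartingale with $X^z_0=0$ and $X^z_T=0$ almost surely.

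The step I expect to be the main obstacle is upgrading this single-horizon identity (valid for all initial states) into a statement about the whole path. Conditioning on ${\cal F}_s$ for $s<T$ and using that, by the Markov property, the hypothesis applies to the reference process restarted from the random point $\tilde Z(s)$, one gets $0=\mathbb{E}_z[X^z_T\mid{\cal F}_s]=X^z_s+h_{T-s}(\tilde Z(s))$, where $h_u(z'):=\mathbb{E}_{z'}[X^{z'}_u]$; hence $X^z_s=-h_{T-s}(\tilde Z(s))$ a.s. Comparing instead the hypothesis over $[0,T]$ with the hypothesis over $[\epsilon,\epsilon+T]$ (applied to the process restarted at $\tilde Z(\epsilon)$) and taking expectations gives $h_\epsilon=P_T h_\epsilon$, where $P_T$ is the transition operator of $\tilde Z$; since that RBM is positive recurrent (the first $K$ coordinates live in a compact set and the remaining ones have the mean-reverting drift $-\gamma_k z_k$), its invariant functions of polynomial growth are constant, so each $h_\epsilon$ is a constant. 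Therefore $X^z_s$ is almost surely equal to the deterministic, finite-variation function $s\mapsto-h_{T-s}$, and by uniqueness of the semimartingale decomposition its martingale part $\int_0^t(G-\nabla V)(\tilde Z)\,dB$ vanishes identically; consequently $\int_0^t\sum_k\sigma_k^2\big(G_k(\tilde Z(s))-\tfrac{\partial V}{\partial z_k}(\tilde Z(s))\big)^2\,ds=0$, so $(G-\nabla V)(\tilde Z(s))=0$ for a.e.\ $s$, and continuity of $G-\nabla V$ together with positivity of the transition density of $\tilde Z$ on $S$ forces $G\equiv\nabla V$. (If one instead reads~(\ref{eqn:Proposition2}) as holding for \emph{every} $T>0$, this paragraph collapses to a one-line appeal to uniqueness of the semimartingale decomposition of the two representations of $V(\tilde Z(\cdot))-V(\tilde Z(0))$.)

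With $G=\nabla V$, the surviving identity is deterministic and of finite variation. Starting $\tilde Z$ from an interior point and stopping before its first visit to $\partial S$ removes the two singular sums, so on that interval $\int_0^t[\beta-{\cal L}V(\tilde Z(s))+F(\tilde Z(s),\nabla V(\tilde Z(s)))]\,ds$ equals a deterministic function of $t$; differentiating at $t\downarrow0$ and letting the interior starting point vary shows that $z\mapsto\beta-{\cal L}V(z)+F(z,\nabla V(z))$ is constant, first on the interior and then on $S$ by continuity, and tracking this constant back through $h_0=0$ forces it to vanish — which, by the definition~(\ref{eqn:AuxiliaryFunction}) of $F$, is precisely the interior HJB equation~(\ref{eqn:HJB}). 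The identity then reduces to $\sum_{k}\int_0^t\frac{\partial V}{\partial z_k}(\tilde Z)\,d\tilde L_k=\sum_{k\le K}\int_0^t\big(\frac{\partial V}{\partial z_k}(\tilde Z)-p\big)\,d\tilde U_k$ for all $t$ and all starting points. Since $d\tilde L_k$ is carried by $\{\tilde Z_k=0\}$ and $d\tilde U_k$ by $\{\tilde Z_k=b_k\}$, choosing starting points that make $\tilde Z$ accumulate local time on a single face before reaching any other, together with continuity of $\nabla V$ and nondegeneracy of the boundary local times, forces $\frac{\partial V}{\partial z_k}=0$ on $\{z_k=0\}$ (boundary condition~(\ref{eqn:HJB_Boundary1})) and $\frac{\partial V}{\partial z_k}=p$ on $\{z_k=b_k\}$ for $k\le K$ (boundary condition~(\ref{eqn:HJB_Boundary2})). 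The last condition~(\ref{eqn:HJB_Boundary3}) follows by fixing the other coordinates, dividing the interior equation by $z_k$, and letting $z_k\to\infty$, the growth hypotheses rendering the second-order and $\max_{\mu}$ terms negligible in that limit; altogether $(V,\beta)$ solves the HJB equation.

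In summary, the only nonroutine ingredient is the middle paragraph: because the hypothesis is posed at a single horizon $T$, one cannot invoke uniqueness of the semimartingale decomposition directly, and a Markov-plus-ergodicity argument is needed to show that $X^z_\cdot$ is deterministic — hence martingale-free. Everything afterward — isolating $G=\nabla V$, then the interior PDE, then the Neumann-type boundary conditions — is a separation of the absolutely continuous and singular parts of a finite-variation identity and is otherwise standard.
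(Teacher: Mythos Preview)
Your approach is correct and genuinely different from the paper's. The paper does \emph{not} subtract It\^o's formula from the hypothesis and then analyze the resulting semimartingale. Instead, it (i) iterates~(\ref{eqn:Proposition2}) over blocks $[\ell T,(\ell{+}1)T]$ using the Markov property, (ii) takes expectations under the stationary distribution $\tilde\pi$ to identify $\beta$ as the long-run average of the running cost $\tilde\mu\cdot G-F(\cdot,G)$, (iii) \emph{invokes existence} of a relative value function $\tilde V$ for this auxiliary ergodic problem satisfying the full PDE with boundary conditions, (iv) shows $V=\tilde V$ by comparing the conditional expectations~(\ref{eqn:ExpectationVTilde})--(\ref{eqn:ExpectationV}) and passing $n\to\infty$ via Vitali, and only then (v) deduces $G=\nabla V$ by subtracting the two It\^o representations and applying It\^o's isometry. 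So the paper establishes the PDE first (for a surrogate $\tilde V$) and $G=\nabla V$ last; you do the reverse.

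What each buys: the paper's route offloads all boundary behavior---including the condition at infinity~(\ref{eqn:HJB_Boundary3})---onto the existence statement for $\tilde V$, at the cost of appealing to PDE theory that is merely asserted as ``straightforward.'' Your route is more self-contained probabilistically: the $P_T$-invariance argument for $h_\epsilon$ is a clean way to turn the single-horizon hypothesis into a pathwise statement, and the separation into interior PDE and Neumann data via support-theorem localization is standard and transparent. The one place your sketch is weaker is the derivation of~(\ref{eqn:HJB_Boundary3}): dividing the interior equation by $z_k$ and sending $z_k\to\infty$ needs the second-order term $\sum_j\sigma_j^2\partial_{jj}V$ and the $\max_{\mu\in\mathcal A(z)}\mu\cdot\nabla V$ term to be $o(z_k)$, and the stated growth hypotheses on $V,\nabla V$ alone do not obviously deliver this (you would need, e.g., uniform boundedness of $\mathcal A(z)$ and some control on the Hessian, or an integrated version of the argument). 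The paper sidesteps this by baking~(\ref{eqn:HJB_Boundary3}) into the existence of $\tilde V$, which is arguably no more rigorous but at least isolates the assumption.
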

\begin{proof}
    One can show that under the reference policy $\tilde{\mu}$, the reference process $\tilde{Z}(\cdot)$ has a unique stationary distribution. We denote it by $\tilde{\pi}$, and let $\tilde{Z}(\infty)$ be a random variable with distribution $\tilde{\pi}$. Then assuming the initial distribution of $\tilde{Z}(\cdot)$ is $\tilde{\pi}$, i.e., $\tilde{Z}(0)\sim \tilde{\pi}$, its marginal distribution of time $t$ is also $\tilde{\pi}$, i.e., $\tilde{Z}(t)\sim \tilde{\pi}$ for every $t\geq 0$.

    Moreover, because $\tilde{Z}$ is a time-homogeneous Markov process, we can express (\ref{eqn:Proposition2}) equivalently as follows for any $\ell=0,1,\ldots$:
    \begin{align}
        \begin{split}
            V(\tilde{Z} ((\ell+1)T)) - V(\tilde{Z} (\ell T)) &= \int_{\ell T}^{(\ell+1)T} G(\tilde{Z}(t))\cdot dB(t)+\beta T - p\sum_{k=1}^K \left[ \tilde{U}_k ((\ell+1)T)-\tilde{U}_k (\ell T) \right] \\
            &\quad -\int_{\ell T}^{(\ell+1)T} \tilde{\mu}\cdot G(\tilde{Z}(t))dt+\int_{\ell T}^{(\ell+1)T} F (\tilde{Z}(t), G(\tilde{Z}(t)))dt. 
        \end{split}
    \end{align}
    Adding these for $\ell=0,1,\ldots,n-1$ gives 
    \begin{align}
        \begin{split}
            V(\tilde{Z} (nT))  &= V(\tilde{Z} (0))+ \int_0^{nT} G(\tilde{Z}(t))\cdot dB(t)+\beta n T - p\sum_{k=1}^K \tilde{U}_k (nT) \\
            & \quad - \int_0^{nT} \tilde{\mu}\cdot G(\tilde{Z}(t))dt + \int_0^{nT} F (\tilde{Z}(t), G(\tilde{Z}(t)))dt \label{eqn:StationaryIdentity}
        \end{split}
    \end{align}
    for $n\geq 0$. Taking the expectation of both sides, and noting that $\E_{\tilde{\pi}} [V(\tilde{Z}(0)]=\E_{\tilde{\pi}} [V(\tilde{Z}(nT)]$, we conclude that 
    \begin{align}
        \beta nT &= \E_{\tilde{\pi}} \left[p \sum_{k=1}^K \tilde{U}_k (nT)+\int_0^{nT}\tilde{\mu}\cdot G(\tilde{Z}(t))dt-\int_0^{nT} F(\tilde{Z}(t), G(\tilde{Z}(t)))dt \right].
    \end{align}
    Letting $n\rightarrow \infty$, it follows that 
    \begin{align}
        \beta &= \overline{\lim}_{nT\rightarrow\infty} \frac{1}{nT} \E_{\tilde{\pi}} \left[ p \sum_{k=1}^K U_k (nT)+\int_0^{nT} \tilde{\mu}\cdot G(\tilde{Z}(t))dt-\int_0^{nT} F(\tilde{Z}(t), G(\tilde{Z}(t))) dt \right]
    \end{align}
    In words, $\beta$ is the long-run average cost associated with the reference process $\tilde{Z}$, where the running cost is $\tilde{\mu}\cdot G(\cdot)-F(\cdot, G(\cdot))$. Then it is straightforward to argue that there exists a relative value function $\tilde{V}$ that satisfies the following PDE: 
    \begin{align}
        \beta &= {\cal L} \tilde{V}(z)-\tilde{\mu}\cdot \nabla \tilde{V}(z)+\tilde{\mu}\cdot G(z) - F(z, G(z)), \quad z\in S, \label{eqn:HJB_Prop2}  \\
        \frac{\partial \tilde{V}(z)}{\partial z_k} &= 0 \quad \text{ if } z_k = 0,\; k=1,\ldots, 2K, \label{eqn:HJB_Boundary1_Prop2} \\
        \frac{\partial \tilde{V}(z)}{\partial z_k} &= p \quad \text{ if } z_k = b_k,\; k=1,\ldots, K, \label{eqn:HJB_Boundary2_Prop2}   \\
        \lim_{z_k\rightarrow \infty} \frac{\partial \tilde{V}(z)}{\partial z_k} &= \frac{c_2}{\gamma_k}, \; k=K+1,\ldots,2K. \label{eqn:HJB_Boundary3_Prop2}
    \end{align}
    Applying Ito's formula to $\tilde{V}(\tilde{Z}(t))$ yields 
    \begin{align}
        \begin{split}
            \tilde{V}(\tilde{Z}(nT))-\tilde{V}(\tilde{Z}(0)) &= \int_0^{nT} \nabla \tilde{V} (\tilde{Z}(t))\cdot dB(t)+\int_0^{nT} {\cal L} \tilde{V}(\tilde{Z}(t)) dt \\
            &\quad - \int_0^{nT} \tilde{\mu}\cdot \nabla \tilde{V} (\tilde{Z}(t)) dt - p \sum_{k=1}^K \tilde{U}_k (nT).  \label{eqn:ItoLemma_Prop2}
        \end{split}
    \end{align}
    Then substituting (\ref{eqn:HJB_Prop2}) into (\ref{eqn:ItoLemma_Prop2}) gives 
    \begin{align}
        \begin{split}
            \tilde{V}(\tilde{Z}(nT))-\tilde{V}(\tilde{Z}(0)) &= \int_0^{nT} \nabla \tilde{V} (\tilde{Z}(t))\cdot dB(t)+\beta nT + \int_0^{nT} F (\tilde{Z}(t), G(\tilde{Z}(t))\cdot \tilde{\mu} dt \\
            &\quad -\int_0^{nT} \tilde{\mu}\cdot G(\tilde{Z}(t)) dt - p \sum_{k=1}^K \tilde{U}_k (nT).  \label{eqn:ItoLemma_Substitute}
        \end{split}
    \end{align}
    Conditioning on $\tilde{Z}(0)=z$, and taking expectations give 
    \begin{align}
        \tilde{V}(z) &= \E_{z} \left[ \tilde{V}(\tilde{Z}(nT)) \right]-\beta nT+\E_z \left[p \sum_{k=1}^K \tilde{U}_k (T) +\int_0^{nT}  \tilde{\mu} \cdot G(\tilde{Z}(t)) dt -\int_0^{nT} F(\tilde{Z}(t), G(\tilde{Z}(t))dt \right]. \label{eqn:ExpectationVTilde}
    \end{align}
    Similarly, it follows from (\ref{eqn:StationaryIdentity}) that 
    \begin{align}
        V(z) &= \E_z \left[V(\tilde{Z}(nT))\right]-\beta nT+\E_z \left[p \sum_{k=1}^K \tilde{U}_k (T)+\int_0^{nT} \tilde{\mu} \cdot G(\tilde{Z}(t)) dt-\int_0^{nT} F(\tilde{Z}(t), G(\tilde{Z}(t))dt \right], \label{eqn:ExpectationV}
    \end{align}
    for $z\in S$ and $n$ an arbitrary positive integer. Subtracting (\ref{eqn:ExpectationVTilde}) from (\ref{eqn:ExpectationV}) gives
    \begin{align}
        V(z)-\tilde{V}(z) &= \E_z \left[V(\tilde{Z}(nT))\right]-\E_z \left[  \tilde{V}(\tilde{Z}(nT))\right].  \label{eqn:Prop2Convergence}
    \end{align}
    Without loss of generality, we assume $\E[V(Z(\infty))]=E[\tilde{V}(Z(\infty))]$. Since $V(\cdot)$ and $\tilde{V}(\cdot)$ have polynomial growth, we have $\sup_{n>0} \E_z [V(\tilde{Z}(nT))^2]<\infty$ and $\sup_{n>0} \E_z [\tilde{V}(\tilde{Z}(nT))^2]<\infty$. Then, by Vitali's convergence theorem, we conclude that \begin{align}
        \lim_{n\rightarrow\infty} \E_z \left[V(Z(nT)) \right] &= \E\left[ V(\tilde{Z}(\infty))\right],   \\
        \lim_{n\rightarrow\infty} \E_z \left[\tilde{V}(Z(nT)) \right] &= \E\left[ \tilde{V}(\tilde{Z}(\infty))\right].
    \end{align}
    Therefore, passing to the limit in (\ref{eqn:Prop2Convergence}), we conclude 
    \begin{align}
        V(z) &= \tilde{V}(z) \quad \text{ for } z\in S, 
    \end{align}
    which means $V$ satisfies the PDE (\ref{eqn:HJB_Prop2})-(\ref{eqn:HJB_Boundary3_Prop2}). That is, 
    \begin{align}
        \beta &= {\cal L} V(z) -\tilde{\mu}\cdot \nabla V(z)+\tilde{\mu}\cdot G(z) - F(z, G(z)), \quad z\in S, \label{eqn:HJB2_Prop2} \\
        \frac{\partial V(z)}{\partial z_k} &= 0, \quad \text{ if } z_k=0 \; (k=1,\ldots, 2K), \label{eqn:HJB2_Boundary1_Prop2} \\
        \frac{\partial V(z)}{\partial z_k} &= p, \quad \text{ if } z_k=b_k \; (k=1,\ldots, K). \label{eqn:HJB2_Boundary2_Prop2} \\
        \lim_{z_k\rightarrow\infty} \frac{\partial V(z)}{\partial z_k} &= \frac{h_2}{\gamma_k},\quad (k=K+1,\ldots,2K). \label{eqn:HJB2_Boundary3_Prop2}
    \end{align}
    Suppose for now that $G(\cdot)=\nabla V(\cdot)$ (which will be shown later). Substituting this in Equation (\ref{eqn:HJB2_Prop2}) and using the definition of $F$ gives 
    \begin{align}
        \beta &= {\cal L} V(z) - \max_{\mu\in {\cal A}(z)} \{\mu\cdot \nabla V(z)\} + \sum_{k=1}^{2K} c_k(z),  \quad z\in S, 
    \end{align}
    which along with (\ref{eqn:HJB2_Boundary1_Prop2})-(\ref{eqn:HJB2_Boundary3_Prop2}) gives the desired result, i.e., $(\beta, V)$ satisfies HJB equations. 

    To complete the proof, it remains to show that $G(\cdot)=\nabla V(\cdot)$. By applying Ito's lemma to $V(\tilde{Z}(t))$, we conclude 
    \begin{align}
        \begin{split}
            V(\tilde{Z}(T))-V(\tilde{Z}(0)) &= \int_0^{T} ({\cal L} V (\tilde{Z}(t))-\tilde{\mu} \cdot \nabla V(\tilde{Z}(t)) ) dt + \int_0^{T} \nabla V (\tilde{Z}(t))\cdot dB(t) - p \sum_{k=1}^K \tilde{U}_k (T).  \label{eqn:ItoLemma2_Prop2}
        \end{split}
    \end{align}
    Then using (\ref{eqn:HJB2_Prop2}), we write this as follows: 
    \begin{align}
        \begin{split}
            V(\tilde{Z}(T))-V(\tilde{Z}(0)) &= \beta T -\int_0^T \tilde{\mu} \cdot G(\tilde{Z}(t))dt + \int_0^T F(\tilde{Z}(t), G(\tilde{Z}(t))dt \\
            &\quad - p\sum_{k=1}^K \tilde{U}_k(t)+ \int_0^{T} \nabla V (\tilde{Z}(t))\cdot dB(t)
        \end{split}
    \end{align}
    Combining this with Equation (\ref{eqn:Proposition2}) yields 
    \begin{align*}
        \int_0^T (G(\tilde{Z}(t))-\nabla V(\tilde{Z}(t))) \cdot dB(t)= 0.
    \end{align*}  
    which leads to the following 
    \begin{align}
        \E_z \left[\left( \int_0^T (G(\tilde{Z}(t))-\nabla V(\tilde{Z}(t))) \cdot dB(t) \right)^2 \right] &= 0
    \end{align}
    Then, by Ito's isometry, we write 
    \begin{align}
        \E_z \left[ \int_0^T \norm{g(\tilde{Z}(t))-\nabla V(\tilde{Z}(t))}  ^2 dt \right] &= 0.
    \end{align}
    Thus, $\nabla V(\tilde{Z}(t))=G(\tilde{Z}(t))$ almost surely. By continuity of $\nabla V(\cdot)$ and $G(\cdot)$, we conclude that $\nabla V(\cdot)=G(\cdot)$. 
\end{proof}

\subsection{Approximating the Auxiliary Function $F(z, v)$}
\label{subsec:ApproximateAuxiliaryFunction}

Equation (\ref{eqn:AuxiliaryFunction}) defines the auxiliary function $F(z, v)$ implicitly due to the term $\max_{\mu\in \mathcal{A}(z)} \{\mu v\}$, where ${\cal A}(z)$ denotes the set of feasible eviction enforcement rate vectors given the system state $z$. Thus, computing $F(z, v)$ involves solving a budgeted prize-collecting VRP.
It is budgeted, because each eviction team can work only a certain number of hours. The ``prizes'' correspond to the vector $v$, which will ultimately be given by the gradient $\nabla V$ of the value function, but it is computationally taxing to solve an instance of the budgeted prize-collecting VRP online as needed at every iteration of our algorithm. 

To facilitate our analysis, we define the auxiliary function $H(z, v)=\max_{\mu\in {\cal A}(z)} \{\mu v\}$ and approximate it offline using a separate neural network model, which is trained using the solutions to a large number of representative budgeted prize-collecting VRPs for a wide-range of state-prize pairs $(z,v)$. In doing so, we sample the input for the state vector $z$ using the CCSO data, utilizing historical information on pending requests on a daily basis. In addition, we choose $v$ uniformly at random in a set that we expect the gradient $\nabla V$ of the value function to take values in. 

Solving the budgeted prize-collecting VRP repeatedly for the purpose of generating the data to be used to train the neural network, which approximates the auxiliary function $H$, is also computationally demanding. Therefore, we build on \citet{Hegde2015} and \citet{GoemansWilliamson1995} to derive an algorithm for solving the budgeted prize-collecting TSP. \citet{Hegde2015} considers a related problem, the prize-collecting Steiner tree problem (PCST), and develops an efficient algorithm for solving it. We adapt their approach by converting the tree identified by their algorithm into a tour using the method described by \citet{GoemansWilliamson1995}. We then repeat this for each eviction team, one at a time. We refer the reader to Appendix \ref{app:TrainingFFunction_Appendix} for further details of the approximation of the function $H$.

\section{Computational Method}
\label{sec:ComputationalMethod}

Our computational method builds on the seminal work of \citet{Han2018}. Similar to their approach, we use the identity in Equation (\ref{eqn:Identity}) to define our loss function. With the chosen reference policy $\tilde{\mu}$ introduced in Section \ref{sec:HJB}, we first simulate the corresponding discretized sample paths. To do so, we first fix a partition $0=t_0<t_1<\cdots<t_N=T$ of the time horizon $[0, T]$ and then simulate the corresponding discretized sample paths of the reference process at each time $t=t_0,\ldots, t_N$. 
Algorithm \ref{alg:Discretization} describes the discretization procedure. Note that the discretization procedure requires solving a related Skorokhod problem; see Algorithm \ref{alg:Skorokhod}.
\begin{algorithm}[h!]
	\caption{Euler discretization scheme}\label{alg:Discretization}
    {\bf Input:} The drift vector $\tilde{\mu}$, the variance parameter $\sigma_k^2$, the time horizon $T$, the number of intervals $N$, a step-size $\Delta t$ (for simplicity, we assume $\Delta t=T/N$ is an integer), and a starting point $\tilde{Z}(0)=z\in S$.

    {\bf Output:} A discretized reflected Brownian motion and the Brownian increments at times $t_n$, where $n=0,1,\ldots, N-1$, and the values of $\tilde{U}(T)$.
	\begin{algorithmic}[1]
        \Function{Discretize}{$T, \Delta t, z$} 
		\State For interval $[0, T]$ and $N=T/\Delta t$, construct the partition $0=t_0<\cdots<t_N=T$, where $t_n=n\Delta t$ for $n=0,\ldots,N$.
		\State Generate $N$ i.i.d. $2K$-dimensional Gaussian random vectors with mean zero and covariance $\Delta t\sigma^2 I$ for $n=0,\ldots, N-1$, denoted by $\delta_0, \ldots, \delta_{N-1}$.
        \For{$k\leftarrow 1$ to $K$} 
        \State $u_k\leftarrow 0$
        \EndFor
        \For{$n\leftarrow 0$ to $N-1$}
        \For{$k\leftarrow 1$ to $2K$}
        \State $x\leftarrow \tilde{Z}_k(t_n)+\delta_n+(\lambda_k-\tilde{\mu}_k-\gamma_k\tilde{Z}_k(t_n)) \Delta t$
        \EndFor
        \State $(\tilde{Z}(t_{n+1}), u)\leftarrow \Call{Skorokhod}{x, u}$
        \EndFor
        \State \Return Discretized reflected Brownian motion $\tilde{Z}(t_1), \tilde{Z}(t_2), \ldots, \tilde{Z}(t_N)$, Brownian increments $\delta_0, \ldots, \delta_{N-1}$, and the value $u$.
        \EndFunction
	\end{algorithmic}
\end{algorithm}

\begin{algorithm}[h!]
	\caption{Solution to the Skorokhod problem}\label{alg:Skorokhod}
    {\bf Input:} A vector $x\in \mathbb{R}^{2K}$ and $u\in \mathbb{R}^{2K}$. 

    {\bf Output:} A solution to the Skorokhod problem $y\in \prod_{k=1}^K [0, b_k] \times \mathbb{R}_+^{K}$ and current value of $\tilde{U}(\cdot)$.
	\begin{algorithmic}[1]
        \Function{Skorokhod}{$x, u$}
		\For{$k\leftarrow 1$ to $2K$}
        \State $u_k\leftarrow u_k+\max\{0, x_k-b_k\}$. 
        \State If $1\leq k\leq K$, $x_k\leftarrow\max\{0, \min\{x_k, b_k\}\}$. 
        \State Else, $x_k\leftarrow \max\{0, x_k\}$.
        \EndFor
        \State \Return $x$ and $u$.
        \EndFunction
	\end{algorithmic}
\end{algorithm}

As discussed in Section \ref{sec:HJB}, we select $\tilde{\mu}$ so that the resulting sample paths are likely to occupy those generated by the optimal policy. In our numerical study, we choose $\tilde{\mu}$ randomly so that the generated sample paths occupy low, medium, or high levels within the state space. More specifically, we choose $\tilde{\mu}$ differently depending on whether the class has a deadline or not. When a class $k$ has a deadline (i.e., $1\leq k\leq K$), we aim to select $\tilde{\mu}_k$ so that the generated sample paths stay near $b_k$ (high level), $b_k/2$ (medium level), and 0 (low level). Similarly, for a class $k$ has no deadline (i.e., $K+1\leq k\leq 2K$), we also choose $\tilde{\mu}$ to occupy a high, medium, and low levels; see a detailed discussion of the reference policy in Appendix \ref{subapp:ReferencePolicy}. 
In addition, we also consider multiplying the diffusion coefficient $\sigma$ by a constant so that the reference policy can visit a larger set of states. In a certain sense, this increases the variation of the training data we generate using the reference process.

We parameterize $V(\cdot)$ and $\nabla V(\cdot)$ using deep neural networks $V^{w_1}(\cdot)$ and $G^{w_2}(\cdot)$ with parameters vector $w_1$ and $w_2$, respectively. We then use the identity (\ref{eqn:Identity}) to define the loss function:
\begin{align*}
	L (w_1, w_2, \beta) &= \E \Bigg[\Bigg(V^{w_1} (\tilde{Z}(T))-V^{w_1} (\tilde{Z}(0))- \int_{0}^T G^{w_2} (\tilde{Z}(t)) \cdot dB(t) -T\beta +p \sum_{k=1}^K \tilde{U}_k(T) \\
    &\hspace{1cm} \quad +\int_0^T \tilde{\mu}\cdot G^{w_2}(\tilde{Z}(t)) dt   -\int_{0}^T F (\tilde{Z}(t), G^{w_2}(\tilde{Z}(t)))dt \Bigg)^2\Bigg].
\end{align*}

By defining $\ell (w_1, w_2)=\min_\beta L(w_1, w_2,\beta)$ and noting that $\min_\beta \E [(X-\beta)^2]=\Var (X)$, we work with the following loss function: \begin{align*}
	\ell (w_1, w_2) & = \Var \Bigg(V^{w_1} (\tilde{Z} (T))- V^{w_1} (\tilde{Z}(0)) - \int_{0}^T G^{w_2} (\tilde{Z} (t)) dB(t) +p\sum_{k=1}^K \tilde{U}_k(T)  \\
 &\hspace{1cm} \quad +\int_0^T \tilde{\mu}\cdot G^{w_2}(\tilde{Z}(t)) dt -\int_{0}^T  F (\tilde{Z}(t), G^{w_2} (\tilde{Z}(t)) )dt \Bigg). 
\end{align*}
Algorithm \ref{alg:MultiDimControl} presents our method to derive the proposed policy, which approximates the loss function $\ell(w_1,w_2)$ by the empirical loss $\hat{\ell}(w_1,w_2)$ (defined in Algorithm \ref{alg:MultiDimControl}) using discretized sample paths to approximate the integrals.

\begin{algorithm}[h!]
	\caption{}\label{alg:MultiDimControl}
    {\bf Input:} The number of iteration steps $M$, a batch size $B$, a learning rate $\alpha$, a time horizon $T$, a discretization step-size $\Delta t$ (for simplicity, we assume $N=T/\Delta t$ is an integer), a starting point (initial state) $z$, and an optimization solver (SGO, Adam, RMSprop, etc.).

    {\bf Output:} A neural network approximation of the value function $V^{w_1}$ and the gradient function $G^{w_2}$. 
	\begin{algorithmic}[1]
        \State Obtain the approximated function $\hat{H}(z, v)$ as described in Appendix \ref{subapp:ApproximateFunctionH}. Let \begin{align*}
            \hat{F}(z, v)=\hat{H}(z, v)-\sum_{k=1}^{2K} c_k(z_k).
        \end{align*} 
		\State Initialize the neural networks $V^{w_1}$ and $G^{w_2}$. Set $x_0^{(i)}=z_0$ for $i=1,2,\ldots,B$. 
		\For{$j\leftarrow 0$ to $M-1$}
		\State Simulate $B$ discretized RBM paths and the Brownian increments $\{\tilde{Z}^{(i)}, \delta^{(i)}\}$ and $\tilde{U}^{(i)}(T)$ with a time horizon $T$ and a discretization step-size $\Delta t$ starting from $\tilde{Z}^{(i)}(0)=x_k^{(i)}$ by invoking \Call{Discretize}{$T, \Delta t, x_k^{(i)}$}, for $i=1,\ldots, B$.
		\State Compute the empirical loss \begin{align*}
	       \hat{\ell} (w_1, w_2) & = \widehat{\Var} \Bigg(V^{w_1} (\tilde{Z}^{(i)} (T))- V^{w_1} (\tilde{Z}^{(i)}(0)) - \sum_{j=0}^{N-1} G^{w_2} (\tilde{Z}^{(i)} (j \Delta t)) \cdot \delta_j^{(i)}+ p\sum_{k=1}^K \tilde{U}^{(i)}_k(T)  \\ &\hspace{2cm} +\sum_{j=0}^{N-1}\tilde{\mu}\cdot G^{w_2}(\tilde{Z}^{(i)}(j\Delta t)) -\sum_{j=0}^{N-1}  \hat{F}\left(\tilde{Z}^{(i)}(j\Delta t), G^{w_2} (\tilde{Z}^{(i)}(j\Delta t)) \right) \Delta t \Bigg). 
        \end{align*}
		\State Compute the gradient $\partial \hat{\ell}(w_1,w_2)/\partial w_1$ and $\partial \hat{\ell}(w_1,w_2)/\partial w_2$ and update $w_1, w_2$ using the chosen optimizer. 
		\State Update $x^{(i)}_{j+1}$ as the end point of the path $\tilde{Z}^{(i)}: x^{(i)}_{j+1} \leftarrow \tilde{Z}^{(i)}(T)$.
		\EndFor
  
		\State \Return Functions $V^{w_1}(\cdot)$ and $G^{w_2}(\cdot)$.
	\end{algorithmic}
\end{algorithm}

\begin{remark}
    By choosing the time unit suitably so that $t_{n+1}-t_n$ is small for all $n$, one can view $t_n$ as the beginning of day $n$ in our application. Then the various stochastic processes and decisions made during the interval $[t_n, t_{n+1})$ correspond to those on day $n$. However, our proposed policy is simple enough that one does not need to do this translation explicitly, because all it requires is the neural network approximation $G^{w_2}(\cdot)$ of the value function gradient $\nabla V(\cdot)$; see Section \ref{sec:BenchmarkPolicies}.
\end{remark}

\section{Proposed Policy and Benchmarks}
\label{sec:BenchmarkPolicies}

In this section, we describe our proposed policy along with two benchmark policies. First, we describe the general framework common to all three policies. Then we explain how they differ. We compare the performance of the two benchmarks with the proposed policy through a simulation study that is calibrated using CCSO data in Section \ref{sec:Simulation}. 

In order to plan the eviction enforcement activities on a daily basis, the system manager considers the location, the deadline (if there is one), and how long it has been waiting for each eviction order. Recall that the class of an eviction order is determined by its location and whether it has a deadline. Using this information, she assigns prizes to the pending orders. She then determines which orders to enforce and what route to follow using these prizes. In doing so, she solves a budgeted prize-collecting VRP using the exact address of each order. This general framework is common to all three policies described below. The policies we consider differ from each other in two ways: First, they all differ in how they assign prizes to each eviction order. Second, our second benchmark (motivated by the current practice) assigns each team to a geographic region and solves the VRP restricting attention to the eviction orders in that region, whereas our proposed policy and the first benchmark consider all eviction orders when solving the VRP, regardless of their location.

Given the system state $z$, the set ${\cal I}=\{1, \ldots, \sum_{k=1}^{2K} z_k\}$ indexes the pending eviction orders. Similarly, the set ${\cal I}^{(k)}=\{1,\ldots,z_k\}$ indexes the eviction orders in class $k$ ($k=1,\ldots,2K$). Letting ${\cal S}\subset {\cal I}$ a generic index set, we consider the $i$th eviction order in that set, and let $\tau_{\cal S}(i), d_{\cal S}(i)$, and $\pi_{\cal S}(i)$ denote how long it has been waiting, its deadline (if any) and its prize, respectively. Note that the index of an eviction order can be different under different policies.

\paragraph{Proposed policy.}
Fix $t>0$ and assume, without loss of generality, that eviction orders in class $k$, where $1\leq k\leq K$, are labeled in ascending order of the remaining time until their deadline. Specifically, we assume
\begin{align*}
    d_{{\cal I}^{(k)}}(1)-\tau_{{\cal I}^{(k)}}(1) \leq d_{{\cal I}^{(k)}}(2)-\tau_{{\cal I}^{(k)}}(2) \leq \cdots \leq d_{{\cal I}^{(k)}}(z_{k})-\tau_{{\cal I}^{(k)}} (z_{k}), \quad k=1, \ldots, K. 
\end{align*}
For eviction orders in class $k$ where $K+1\leq k\leq 2K$, i.e., those classes with no deadlines, they are labeled based in descending order of how long the order has been waiting, i.e., we assume 
\vspace{-2mm}
\begin{align*}
    \tau_{{\cal I}^{(k)}}(1)\geq \tau_{{\cal I}^{(k)}}(2)\geq \cdots \geq \tau_{{\cal I}^{(k)}}(z_k), \quad k=K+1,\ldots,2K.
\end{align*}
\vspace{-2mm}
For $i=1,\ldots, z_k$ and $k=1,\ldots, 2K$, the proposed policy assigns prizes to each eviction order in class $k$ as follows: 
\vspace{-0.5cm}
\begin{align*}
    \pi_{{\cal I}^{(k)}}(i) &= G_k^{w_2}(z_k-i+1).
\end{align*}
Recall that $G^{w_2}$ is the neural network that approximates the gradient $\nabla V$ of the value function. Naturally, one would expect $G^{w_2}$ to be increasing, leading to higher prizes for eviction orders that have a smaller index within each class. Algorithm \ref{alg:HighLevel_Proposed} presents a high-level description of how we implement the proposed policy.

\begin{algorithm}[h!]
		\caption{High-level description of the implementation for the proposed policy} \label{alg:HighLevel_Proposed}
			\begin{algorithmic}[1]
				\State Obtain $G^{w_2}$ from Algorithm 3.
				\For{each vehicle}
				\For{$k=1,\ldots,2K$}
				\State Let $I_k$ be the set of pending orders of class $k$ and $z_k=|I_k|$.
				\If{$1\leq k\leq K$}
				\State $I_k\leftarrow$ Sort $I_k$ by ascending remaining time to deadline for $i\in I_k$.
				\Else
				\State $I_k\leftarrow$ Sort $I_k$ by descending pending time for $i\in I_k$.
				\EndIf
				\EndFor
				\For{$k=1,\ldots,2K$}
				\For{$i\in I_k$}
				\State Assign prize for the $i$th order in class $k$ as $G_k^{w_2}(z_k-i+1)$ as described.
				\EndFor
				\EndFor
				\State Solve the budgeted prize-collecting TSP for the specific vehicle.
				\EndFor
			\end{algorithmic}
	\end{algorithm}
    
\paragraph{An urgency-based policy.}

This benchmark strives to prioritize eviction orders according to their urgency. First, we assign an artificial deadline of $\tilde{d}$ to all eviction orders without a deadline. Then, at any decision time, we let ${\cal I}_1$ be the set of eviction orders that either have a deadline or they do not have a deadline, but they have been waiting for less than $\tilde{d}$. We let the set ${\cal I}_2$ contain the rest of the eviction orders, i.e., those with no deadline and have been waiting longer than $\tilde{d}$. We also let ${\cal I}={\cal I}_1\cup {\cal I}_2$ denote the set of all pending eviction orders and index them such that the first $|{\cal I}_1|$ of them are in set ${\cal I}_1$. Their indexing further satisfies
\begin{align*}
    d_{{\cal I}_1}(1)-\tau_{{\cal I}_1}(1)\leq d_{{\cal I}_1}(2)-\tau_{{\cal I}_1}(2)\leq \cdots \leq d_{{\cal I}_1}(|{\cal I}_1|)-\tau_{{\cal I}_1}(|{\cal I}_1|).
\end{align*}

That is, the eviction orders with the least slack in set ${\cal I}_1$ is ranked highest among the orders in set ${\cal I}_1$ (recall that if eviction order $i$ does not have a deadline, we set $d_{{\cal I}_1}(i)=\tilde{d}$). Then we set the prizes as follows: 
\begin{align}
    \pi_{\cal I}(i) &= \begin{cases}
    1/2^i, & \text{ if } i\in {\cal I}_1, \\
    1, & \text{ otherwise }.
    \end{cases}  \label{eqn:FCFSPrize}
\end{align}

The motivation for this is as follows: First, the eviction orders in set ${\cal I}_2$, i.e., those that have been waiting longer than $\tilde{d}$, will have the highest priority. Second, note for the eviction orders in set ${\cal I}_1$ that 
\begin{align*}
    1 > \pi_{{\cal I}_1}(i) > \sum_{\ell=i+1}^{|{\cal I}_1|} \pi_{{\cal I}_1}(\ell) \quad \text{ for }  i=1,\ldots, {\cal I}_1.
\end{align*}
That is, the reward from serving eviction order $i$ is higher than all of the lower ranked eviction orders in set ${\cal I}_1$. In addition, the reward for serving an eviction order in set ${\cal I}_2$ is larger than the total reward from serving all orders in set ${\cal I}_1$. Collectively, these observations imply the following priority rule: First, serve the eviction orders in set ${\cal I}_2$ which all have prize one. Then, serve the remaining eviction orders in the order of their urgency, i.e., lowest slack first. The urgency-based policy solves the budgeted prize-collecting VRP formulation using the prizes given in (\ref{eqn:FCFSPrize}). Lastly, the value of the artificial deadline $\tilde{d}$ is calibrated based on how the service effort is divided between the eviction orders with or without deadlines in the CCSO data; see Appendix \ref{app:ModelCalibration} for further details.

\begin{remark}
One can implement other priority rules such as FCFS similarly.

\end{remark}

\paragraph{Threshold-based zone clearing policy (a benchmark policy motivated by the current practice).} 

Roughly speaking, the current CCSO eviction enforcement schedule is based on the following broad principles \citep{PeterPon2023}: Cook county is divided into 12 zones (see Figure \ref{fig:OriginalZoneDesign_WithLabel} in Section \ref{sec:Simulation}), each with either dense demand, e.g., downtown Chicago, or sparse demand, e.g., the suburbs. An eviction team is dispatched to a zone with sparse demand only if the number of pending eviction orders in that zone is sufficiently high. If no such zones exist, all teams are sent to the zones with dense demand. 

Thus, we propose the following benchmark policy referred to as the {\it threshold-based zone clearing policy}. Recall from Section \ref{sec:BackgroundAndModel} that the entire service area are divided into $K$ regions. We use the same division in this policy. We split the $K$ regions of Cook county into two groups: ${\cal G}_s$ and ${\cal G}_d$, where ${\cal G}_s$ is the set of regions with sparse demand (such as the suburbs) and ${\cal G}_d$ consists of the rest of the regions\footnote{We set ${\cal G}_s=\{2, 3, 8, 9, 11, 12\}$ and ${\cal G}_d=\{1, 4, 5, 6, 7, 10\}$ in our computational study \citep{PeterPon2023}.}. We choose a common threshold $\xi>0$ for regions in ${\cal G}_s$.
Each day, the system manager checks whether at least one region in ${\cal G}_s$ has more than $\xi$ pending eviction orders. If so, she dispatches eviction teams according to two rules: (1) she sends one team to a region in ${\cal G}_s$, that is chosen randomly with a probability proportional to the number of pending orders in each region of ${\cal G}_s$ (in which case the team may be sent to a region that has a number of pending orders lower than $\xi$), and (2) she sends the remaining teams to randomly chosen regions in ${\cal G}_d$ with probabilities proportional to the number of pending orders in each region of ${\cal G}_d$. If the number of pending eviction orders is less than $\xi$ in all regions with sparse demand (i.e., in all regions in ${\cal G}_s$), then all teams are sent to randomly chosen regions in ${\cal G}_d$. 

Once a team is assigned to a region, the system manager solves the budgeted prize-collecting VRP to determine which orders will be enforced and what route will be taken. Now, we discuss how to set the prizes. Given that the CCSO does not explicitly prioritize eviction orders with a deadline over those without, it is natural assign the orders within the assigned region an equal prize. However, based on our observation from the CCSO data, the deadline information appears to affect enforcement planning. More specifically, we observe that eviction orders that are closer to their deadline are more likely to be served; see Appendix \ref{subapp:OrderAge}. To incorporate this, we propose the following approach to assign prizes to the pending orders, and calibrate its parameters using the CCSO data.

Consider the route design of an eviction team and let $r^\ast\in \{1, \ldots, K\}$ be the index of the chosen region (i.e., $\pi_{{\cal I}^{(k)}}(i)=0$ for all $k=1,\ldots,2K$ with $k\not =r^\ast$ and $i=1,\ldots,z_k$). Let $i$ be the index of a pending eviction order in that region. If it has a deadline, we set its prize randomly to the realization of a Bernoulli random variable that has mean $\tau_{{\cal I}^{(r^\ast)}}(i)/d_{{\cal I}^{(r^\ast)}}(i)$ for $i=1,\ldots,z_{r^\ast}$, i.e., the ratio of the order's waiting time to its deadline. The higher this ratio (and the lower the slack to serve that order), the more likely it is that the prize is one. On the other hand, if the eviction order does not have a deadline, then its prize is set to the realization of a Bernoulli random variable with mean $w$, which is calibrated using the CCSO dataset; see Appendix \ref{app:ModelCalibration} for further details. Given that prizes for all pending eviction orders in region $r^\ast$, one solves the budgeted prize-collecting VRP in that region to choose the eviction orders to enforce and the route to follow. 

\section{Computational Results}
\label{sec:Simulation}

Our study is calibrated using the CCSO data which we describe in Section \ref{subsec:Data}. The results are then presented in Section \ref{subsec:Numerical}.

\subsection{Data Description and Parameter Estimation}
\label{subsec:Data}

Our dataset consists of 73,683 eviction orders received by the CCSO starting January 1, 2015 until September 30, 2019. The CCSO divides Cook county into 12 zones as shown in Figure \ref{fig:OriginalZoneDesign_WithLabel}, which also highlights the location of the CCSO's main office, serving as the depot for eviction enforcement planning. In practice, eviction orders with associated deadlines may have a random deadline, varying up to 120 calendar days, rather than a common deadline $d$ as assumed in Section \ref{sec:BackgroundAndModel}. This results from potential communication delays occurring after the judge issues the eviction order until the CCSO team receives it. This is incorporated in the simulation study discussed in Section \ref{subsec:Numerical}. 
\begin{figure}[htbp!]
	\centering
	\includegraphics[width=\textwidth]{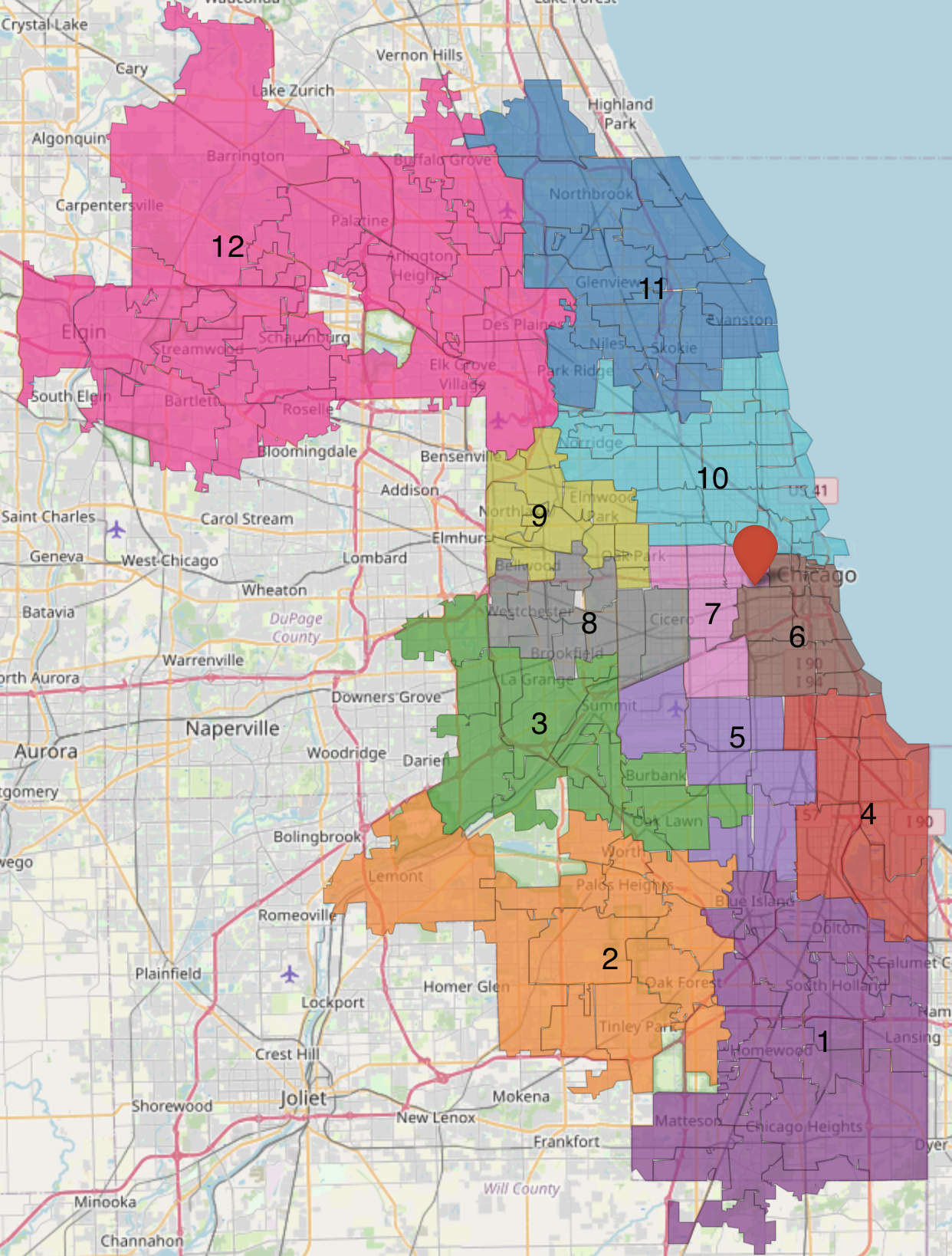}
	\caption{The 12-zone design from the CCSO. The red marker is the CCSO office (which serves as the depot in eviction enforcement planning). }
	\label{fig:OriginalZoneDesign_WithLabel}
\end{figure}

The dataset records the following information for each eviction order: (1) the received date, (2) the address, (3) the enforcement completion date or cancellation date, depending on whether the order is served or canceled, (4) the start and end times of the enforcement (if applicable), (5) the identity of the serving deputy (if applicable), and (6) the deadline associated with the order (if applicable). Next, we describe how we estimate the following quantities using the CCSO dataset: (i) The arrival rates of eviction orders, (ii) The cancellation rates, (iii) The time required to enforce eviction orders once the team is on site, (iv) Travel times, (v) The average number of eviction teams and their availability, and (vi) The deadline for eviction orders.

\paragraph{Estimation of the order arrival process.}

As discussed in Section \ref{sec:BackgroundAndModel}, we assume the arrival process follows a $2K$-dimensional Gaussian process. To estimate the mean and covariance matrix of the arrival process for each class, we perform Maximum Likelihood Estimation (MLE) using the CCSO dataset. Specifically, we use the number of received orders per day for this estimation. 
Figure \ref{fig:SystemDemand} shows the histogram of the aggregate number of eviction orders received on a daily basis across all 12 zones, depending on whether the orders have a deadline or not. 
\begin{figure}[h!]
	\centering
	\begin{subfigure}{.5\textwidth}
		\centering
		\includegraphics[width=\linewidth]{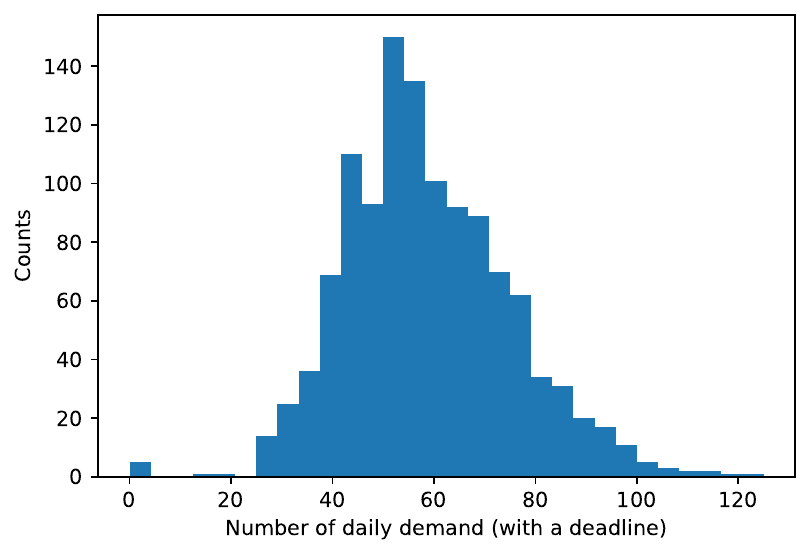}
		\caption{With a deadline}
		\label{fig:sub1}
	\end{subfigure}%
	\begin{subfigure}{.5\textwidth}
		\centering
		\includegraphics[width=\linewidth]{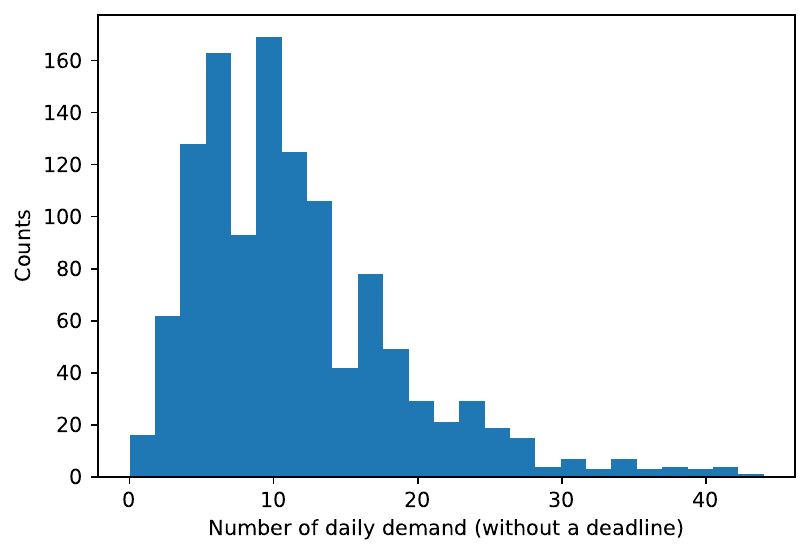}
		\caption{Without a deadline}
		\label{fig:sub2}
	\end{subfigure}
	\caption{Histogram of the number of daily received orders across all zones, depending on whether the eviction orders have a deadline or not. }
	\label{fig:SystemDemand}
\end{figure}

Table \ref{tab:SystemDemand_Stats} presents the MLE results for the estimated mean and standard deviation of the arrival process for the entire system. For a more detailed discussion of the estimation method and the results for each individual zone, see Appendix \ref{subapp:DemandEstimation}.
\begin{table}[h!]
	\centering
		\begin{tabular}{ c||c|c }
			\toprule
			Order type & Mean & Standard deviation \\  \midrule
			With a deadline & 58.68 & 16.43  \\
			Without a deadline & 11.63 & 7.16 \\
			\bottomrule
		\end{tabular} 
	\caption{Estimated mean and standard deviation of the total arrivals per day. }
	\label{tab:SystemDemand_Stats}
\end{table}

\vspace{-0.5cm}

\paragraph{Estimation of the order cancellation rate.}

For simplicity, we estimate a common cancellation rate.
Recall that the eviction orders received can be canceled before they are served. 
To estimate the cancellation rate, we apply a Kaplan-Meier estimator with bias correction, as detailed by \citet{StuteWang1994}. In our estimation, orders that are canceled are treated as uncensored observations, while those that are served are considered censored observations.
The estimated cancellation rate is 0.0080 per day. A detailed discussion of the method and the results are included in Appendix \ref{subapp:CancellationRateEstimation}.

\paragraph{Estimation of eviction enforcement duration (not including travel times).}

When a team of deputies arrives at the address of a pending eviction order, a non-negligible time period is required to enforce the order, i.e., the service time. We estimate the service time by calculating the difference between the recorded eviction start time and end time. Figure \ref{fig:ServiceTimeEstimation} shows the histogram of the service time recorded from the data. 
\begin{figure}[h!]
    \centering
    \includegraphics[scale=0.6]{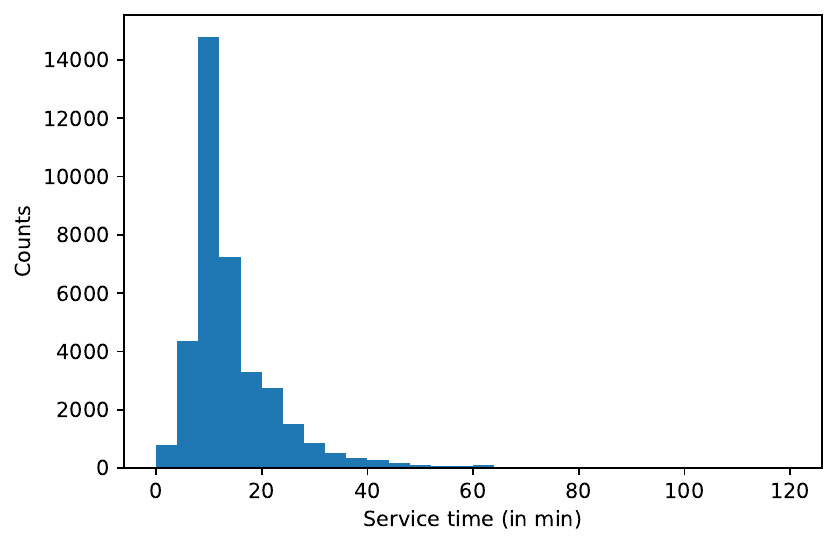}
    \caption{Empirical distribution of the service time of the eviction orders. The mean and standard deviation of the service time is 14.39 and 10.42 minutes, respectively.}
    \label{fig:ServiceTimeEstimation}
\end{figure}

To plan the route, we assume that each eviction order takes 14.39 minutes to enforce, based on the average service time derived from the CCSO dataset. When estimating the total working hours for each team in the simulation, we randomly select a service time for each served order using the empirical distribution of service times from the CCSO data, as shown in Figure \ref{fig:ServiceTimeEstimation}. A detailed discussion of the estimation is provided in Appendix \ref{subapp:ServiceTimeEstimation}.

\paragraph{Estimation of the average number of eviction teams and their availability.}

The number of eviction teams dispatched for daily enforcement assignments varies. Figure \ref{fig:NumDailyTeams} shows the histogram of the number of teams assigned each day in the CCSO dataset. For our simulation study, we assume the number of teams for each daily assignment is 4, which corresponds to the average number of teams observed in the data. Appendix \ref{subapp:NumTeamEstimation} provides further details for this estimation.  
\begin{figure}[h!]
    \centering
    \includegraphics[scale=0.6]{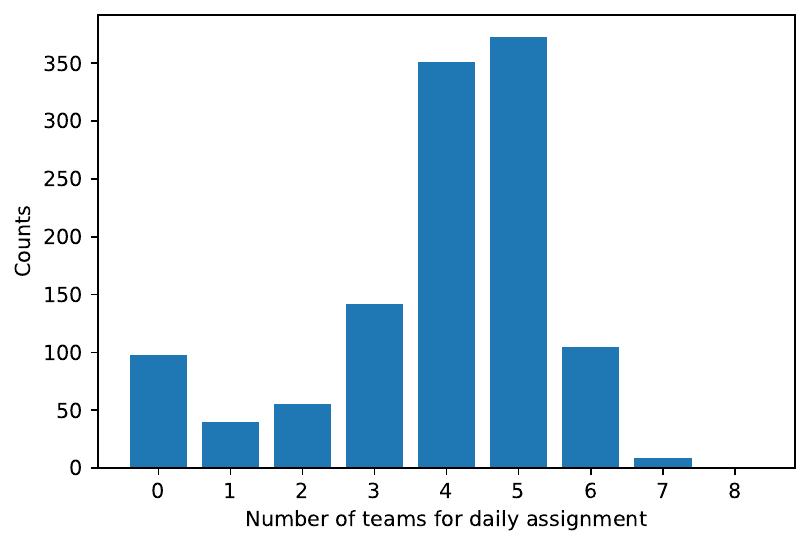}
    \caption{Number of the teams for daily assignments throughout the history. The mean, median, and standard deviation of the number of teams in a daily assignment is 3.8722, 4, and 1.6408, respectively.}
    \label{fig:NumDailyTeams}
\end{figure}

To estimate the daily working hours for each team, we first calculate the total number of working hours throughout the entire dataset. This information is obtained using the recorded eviction start and end time of the served eviction orders of each team. The estimated daily working hours are then determined by dividing the total working hours by the estimated average number of teams. Based on our calculations, each team is estimated to work approximately 5 hours per day\footnote{The deputies' daily working hours is less than the standard 8-hour workday, as they may engage in additional duties throughout the day, such as roll call, responding to unexpected 911 calls, returning their vehicles, and documenting their work summaries at the end of each shift.}. For a detailed discussion of the estimation, please refer to Appendix \ref{subapp:DailyWorkingHoursEstimation}.

\paragraph{Estimation of travel times.} 

The CCSO data does not include explicit information on travel times, but it does record the start and end times of each eviction order (for those that are enforced). While we can obtain the routes of the eviction enforcement from the CCSO data, we cannot directly use the time gap between the end of one order and the start of the next as the travel time for two key reasons: (1) In our proposed policy and the benchmark policies, it is possible that some routes not present in the CCSO data are recommended, leaving us with no information on travel times for those routes,
and (2) travel times can vary significantly throughout the day (e.g., rush hours may lead to longer travel times), a factor that cannot be inferred directly from the data. To address these issues, we use a simple linear regression to estimate the travel time.
Specifically, we first obtain the routes of the eviction enforcement from the CCSO data, treating each pair of consecutively enforced orders as a data point. Each data point has two attributes: (1) the distance difference, which is computed based on the geographical distance between the two orders, and (2) the travel time difference, which is calculated as the time gap between the eviction end time of the former order and the eviction start time of the latter order. Then the regression model is given by: 
\begin{align*}
    {travel\; time} &= \beta \times {distance},
\end{align*}
where $\beta$ is the parameter to be estimated. Our estimation yields $\beta=1.9792$, with travel time and distance measured in minutes and kilometers, respectively. We provide a detailed discussion of the travel time estimation in Appendix \ref{subapp:TravelTimeEstimation}.

\paragraph{Estimation of the deadline.} 
As discussed in Section \ref{sec:BackgroundAndModel}, the eviction orders of class $k$, where $k=1,\ldots, K$, have a deadline, which corresponds to the maximum time length until the order can be served. In the case of the Cook county, a deadline is typically set at 120 calendar days (which is approximately 85 workdays) after the judge issues the eviction order. However, there can be a delay between this approval and the receipt of the order by the CCSO for enforcement, which shorten the actual time duration until an order can be served. We estimate deadlines based on the duration between the received date and the assigned deadline from the data set. Figure \ref{fig:CommonDeadline} shows the distribution of these duration, where we focus on the number of workdays. For the simulation study, we generate each order's deadline randomly based on this empirical distribution.
\begin{figure}[h!]
    \centering
    \includegraphics[scale=0.6]{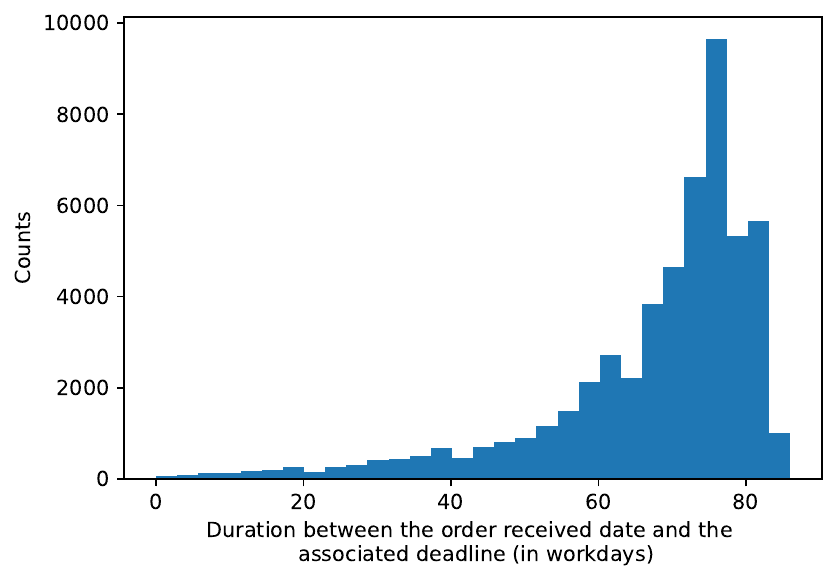}
    \caption{Distribution of the duration between the deadline and received date. The mean, median, and standard deviation is 69.17, 72, and 12.64, respectively.}
    \label{fig:CommonDeadline}
\end{figure}

\subsection{Simulation Study}
\label{subsec:Numerical}

In this section, we compare the performance of our proposed policy against the two benchmark policies introduced in Section \ref{sec:BenchmarkPolicies}. 
Table \ref{tab:PerformanceMeasure} provides a summary of the performance measures used for comparison and their descriptions; see Appendix \ref{app:PerformanceMeasures} further details.

\begin{table}[h!]
	\centering
		\begin{tabular}{ p{0.17\textwidth} | p{0.7\textwidth} }
			\toprule
			Performance & Description \\  \midrule
            Missing deadline percentage & The percentage of the orders that are not served nor canceled prior to their deadline.  \\  \hline
            Cancellation percentage & The percentage of the orders that are canceled (prior to their deadline). We report this metric for orders with or without a deadline separately. \\  \hline
            Average number of orders waiting to be enforced & Number of orders that are waiting for service. We report this metric for orders with or without a deadline separately. \\  \hline
            Number of orders served daily & Average number of orders that are served per day. We report this metric for orders with or without a deadline separately. \\ 
			\bottomrule
		\end{tabular}
	\caption{Performance metrics. }
	\label{tab:PerformanceMeasure}
\end{table}

In the simulation study, one can vary several operational levers. First, the cost parameters $c_1, c_2$, and $p$ should be chosen so that the relative importance of reducing the number of pending orders and avoiding missing the deadline are reflected. Then, for the sake of computational efficiency, it is more practical to focus on a subset of pending orders with longer pending duration during daily planning, rather than considering all pending orders. The performance of the policies considered can depend on the specific size of this selected subset. In order to counteract this effect, we opt for a substantial subset, such as 500 orders that have  the highest prizes. We determine this number so that results do not change significantly beyond this value.
We refer to this number as the service threshold. In the remainder of this section, we first compare our proposed policy with the benchmark policies and then investigate on how different operational factors affect the performance of the proposed policy. 
In the simulation, we set the run length of all the experiments as 2000 simulated workdays, i.e., about eight years.

\subsubsection{Comparison Between Proposed Policy and Benchmarks}

Without loss of generality, we set $p=1$. Since $c_1$ represents the holding cost per pending day for each order with a deadline, and the deadline ranges approximately from 0 to 85 calendar days (see Figure \ref{fig:CommonDeadline}), it is reasonable to assume $c_1<p/85$. In our experiments, we set $c_1=p/200=0.005$. The parameter $c_2$ represents the holding cost per day for orders without a deadline. The relative magnitude of $c_1$ and $c_2$ affect the allocation of service effort between orders with and without deadlines. A higher $c_2$ means that the holding cost for orders without a deadline increases, leading to more such orders being served. For our experiments, we let $h=c_2/c_1$, the ratio of the daily holding cost for orders with and without deadlines, and consider $h\in \{1, 2, \ldots, 10\}$. Note that $h=1$ implies equal holding costs for both orders with and without a deadline. Since orders with missed deadlines incur a penalty cost $p$, a great majority of the service effort is allocated to orders with deadlines in this case. Therefore, we do not consider $h<1$. In addition, we select $\xi=190$ and $w=0.3$ for the threshold-based policy and set $\tilde{d}=143$ for the urgency-based policy. These parameter values are calibrated using the CCSO dataset; see Appendix \ref{app:ModelCalibration} for further details.

\paragraph{Effect of the service effort allocation on the percentage of eviction orders that miss their deadline.}

A larger holding cost ratio $h$ increases the service effort allocated to the eviction orders without a deadline. Figure \ref{fig:ProposedPerformance_DiffHolding} shows how the number of eviction orders with (Figure \ref{subfig:ProposedPerformance_DiffHolding_WithDie}) and without (Figure \ref{subfig:ProposedPerformance_DiffHolding_WithoutDie}) a deadline served changes as one varies $h$. It also shows how the percentage of eviction orders that miss their deadline varies (Figure \ref{subfig:ProposedPerformance_DiffHolding_MissingDie}).

\begin{figure}[h!]
\begin{minipage}{.5\linewidth}
\centering
\subfloat[Number of orders served daily (with a deadline)]{\label{subfig:ProposedPerformance_DiffHolding_WithDie}\includegraphics[width=0.9\linewidth]{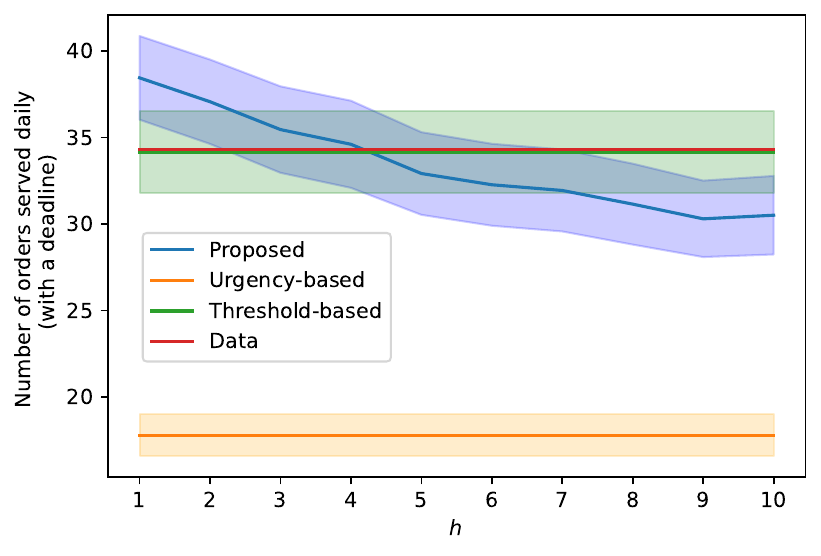}}
\end{minipage}%
\begin{minipage}{.5\linewidth}
\centering
\subfloat[Number of orders served daily (without a deadline)]{\label{subfig:ProposedPerformance_DiffHolding_WithoutDie}\includegraphics[width=0.9\linewidth]{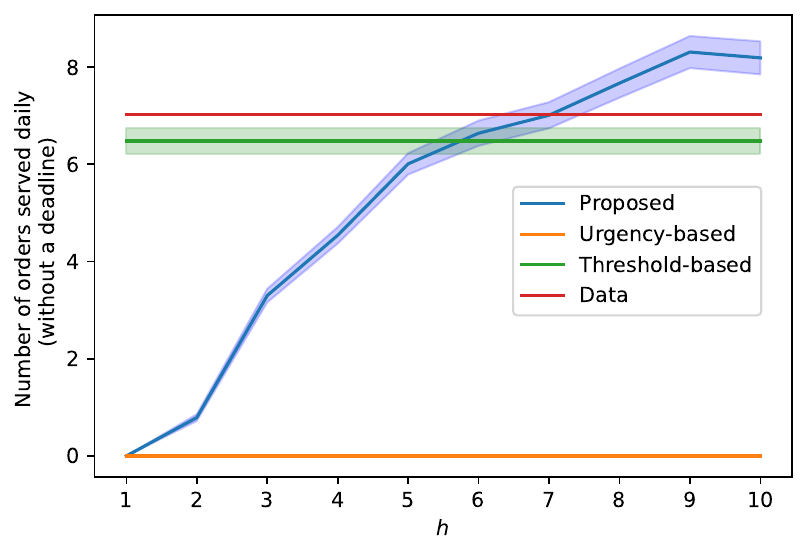}}
\end{minipage}\par\medskip
\centering
\subfloat[Missing deadline
percentage]{\label{subfig:ProposedPerformance_DiffHolding_MissingDie}\includegraphics[width=0.45\linewidth]{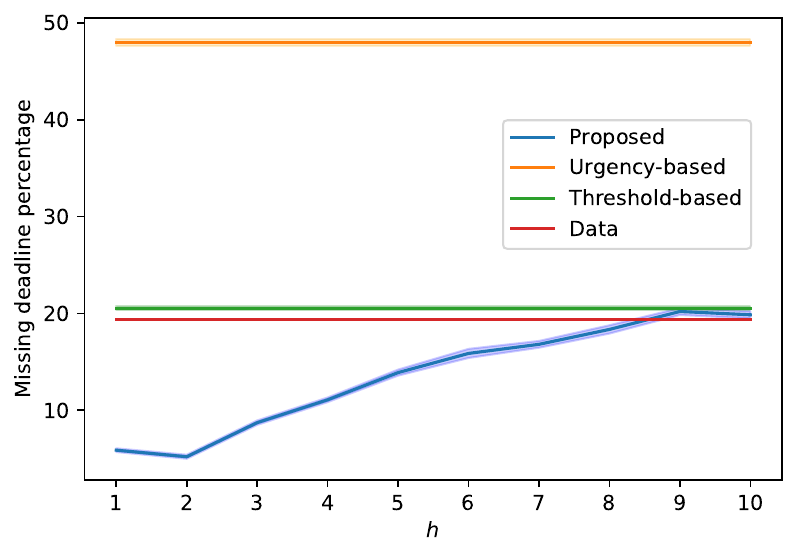}}
\caption{The number of orders served daily and missing deadline
percentage for the two benchmark policies, the data, and the proposed policy in terms of $h\in \{1, 2, \ldots, 10\}$. The shaded area represents the 95\% confidence interval estimated by 10 macro-replications through our simulation. Note that some results have narrow confidence intervals that are not visible in the figure. }
\label{fig:ProposedPerformance_DiffHolding}
\end{figure}

We observe that the number of eviction orders enforced per day is close to that in the CCSO data under the proposed policy when $h=4$, but it significantly lowers the percentage of eviction orders that miss their deadline (by about 73\%). Thus the holding cost parameter $h$ helps one balance the concerns of missing the deadline versus giving sufficient service attention to eviction orders without a deadline.

\paragraph{Trade-off between the percentage of eviction orders that miss their deadline and the cancellation percentage for eviction orders without a deadline.}

As the holding cost ratio parameter $h$ increases, the service effort allocated to the eviction orders with a deadline decreases. As one would expect, this leads to larger percentage of eviction orders missing their deadline and a larger percentage of the orders with a deadline being canceled. More interestingly, Figure \ref{fig:MissDieAndCancellation_WithoutDie} shows the trade off (as $h$ varies) between the missing deadline percentage and the cancellation percentage (for orders without a deadline) as well as the average age of those orders before cancellation.

\begin{figure}[h!]
\begin{subfigure}[h]{0.45\linewidth}
\includegraphics[width=\linewidth]{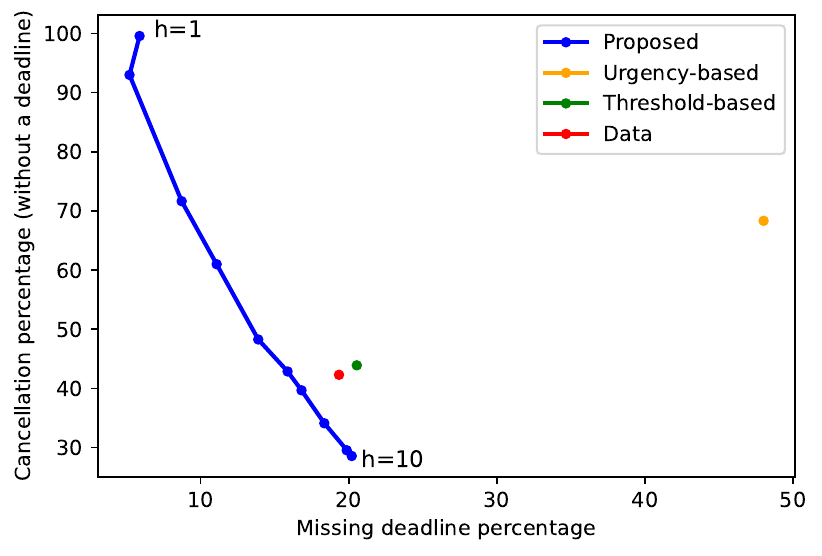}
\caption{Cancellation
percentage (without deadline)}
\end{subfigure}
\hfill
\begin{subfigure}[h]{0.45\linewidth}
\includegraphics[width=\linewidth]{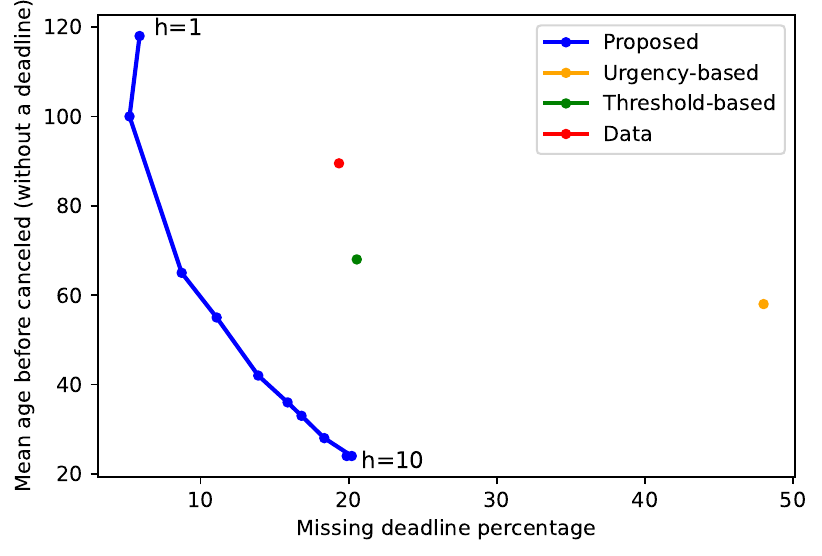}
\caption{Age before canceled (without deadline)}
\end{subfigure}%
\caption{Trade-off curves for the missing deadline percentage against the cancellation percentage (without deadline) and the age of the orders (without deadline) before canceled for the proposed policy with different holding cost, the urgency-based policy, and the threshold-based policy.}
\label{fig:MissDieAndCancellation_WithoutDie}
\end{figure}

As mentioned in Section \ref{sec:BackgroundAndModel}, cancellations are not necessarily an undesirable outcome, as certain orders might be resolved by reaching an agreement between the tenant and the landlord. 
We further include the trade-off curve of the percentage of cancellation between the orders with and without a deadline in Figure \ref{fig:WithAndWithoutDie_Cancellation}.

\begin{figure}[h!]
    \centering
    \includegraphics[width=0.5\linewidth]{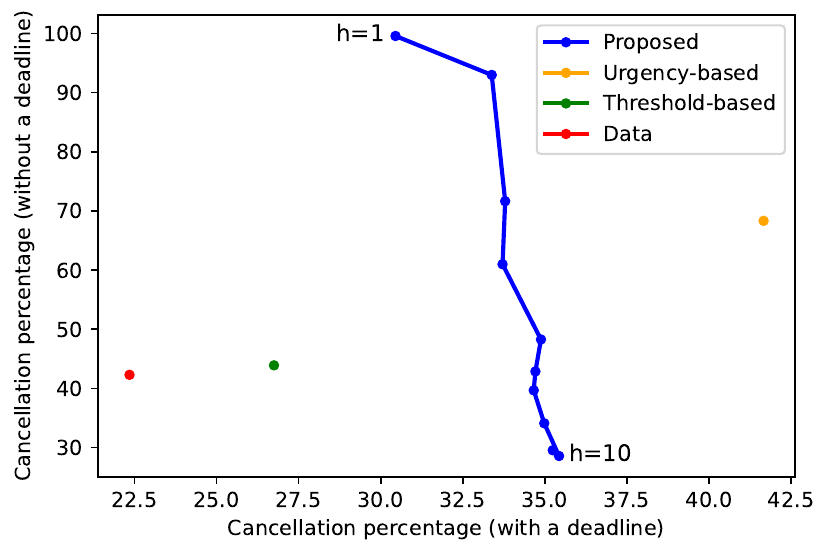}
    \caption{Relationship between the orders with and without deadlines in terms of the cancellation percentage.}
    \label{fig:WithAndWithoutDie_Cancellation}
\end{figure}

\paragraph{Trade-off between missing deadline percentage and the number of orders pending.} 

As the holding cost ratio $h$ increases, more resources are diverted to enforcing the eviction orders without a deadline. Clearly, both the percentage of eviction orders missing their deadline and their number in the system increase in that case. Figure \ref{subfig:Tradeoff_MissDie_NumPending_WithoutDie} shows the trade-off between the percentage missing their deadline and the number of pending orders without a deadline. Similarly, Figure \ref{subfig:Tradeoff_NumPending} shows the trade-off between the number of pending eviction orders of two types (with or without a deadline) as $h$ varies. 

\begin{figure}[h!]
\begin{subfigure}[h]{0.45\linewidth}
\includegraphics[width=\linewidth]{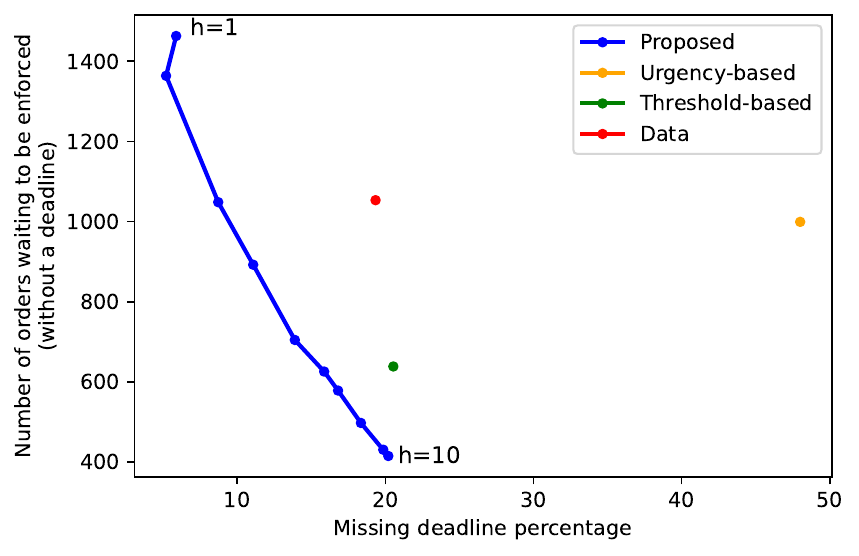}
\caption{Trade-off for missing deadline percentage}
\label{subfig:Tradeoff_MissDie_NumPending_WithoutDie}
\end{subfigure}
\hfill
\begin{subfigure}[h]{0.45\linewidth}
\includegraphics[width=\linewidth]{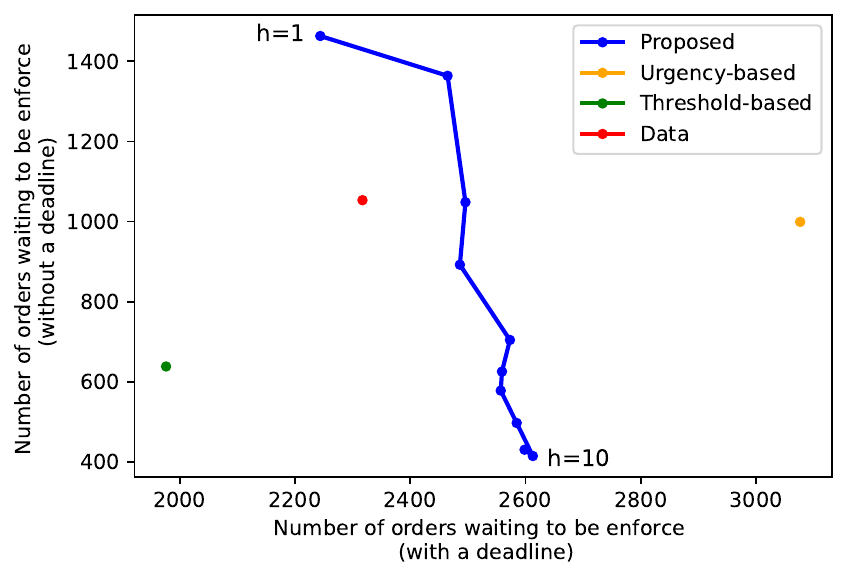}
\caption{Trade-off for number of orders waiting to be enforced (with deadline)}
\label{subfig:Tradeoff_NumPending}
\end{subfigure}%
\caption{Trade-off between the number of orders without a deadline waiting to be enforced with respect to the percentage of orders that miss their deadline and the number of pending orders with a deadline.}
\label{fig:MissDieAndNumPending}
\end{figure}

In summary, as more service effort is diverted from the eviction orders with a deadline to those without a deadline (by increasing $h$), we observe the performance metrics (cancellation percentage, missing deadline percentage, and the number of pending orders) for the orders with a deadline gets slightly worse, whereas the performance metrics for the orders without a deadline improve dramatically. This is reflected by the steep nature of the trade-off curves in Figures \ref{fig:MissDieAndCancellation_WithoutDie}--\ref{fig:MissDieAndNumPending}, and it can guide decision making in practice.

\subsubsection{Counterfactual Analysis}
\label{subsubsec:Counterfactual}

In this section, we conduct the counterfactual analysis of three counterfactual scenarios that vary the number of eviction teams, the number of daily working hours, or the deadline duration. In our experiments, we consider $h=4$ for the proposed policy, because it yields a similar number of daily served orders compared with that of the CCSO data.

As one would expect, increasing service capacity by increasing the number of eviction teams leads to improvements across the board. Figures \ref{fig:Counterfactual_NumVehicles_MissingDeadline}--\ref{fig:Counterfactual/Counterfactual_NumVehicles_NumPending} quantify the magnitudes of these improvements, which, of course, should be weighted against the capacity investment costs. 
In general, for all three policies, we observe that the percentage of missing deadline orders decreases, whereas the percentage of cancellation decreases, the number of daily served orders increases, and the number of daily pending orders decreases, as the number of vehicles increases. 

\begin{figure}[h!]
    \centering
    \includegraphics[width=0.45\linewidth]{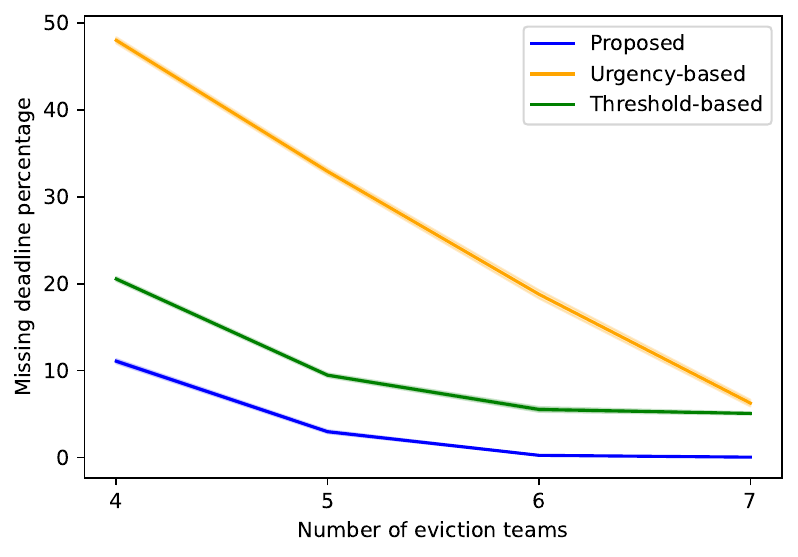}
    \caption{Missing deadline percentage as a function of eviction teams.}
    \label{fig:Counterfactual_NumVehicles_MissingDeadline}
\end{figure}

\begin{figure}[h!]
\centering
\begin{subfigure}{.5\textwidth}
  \centering
  \includegraphics[width=.9\linewidth]{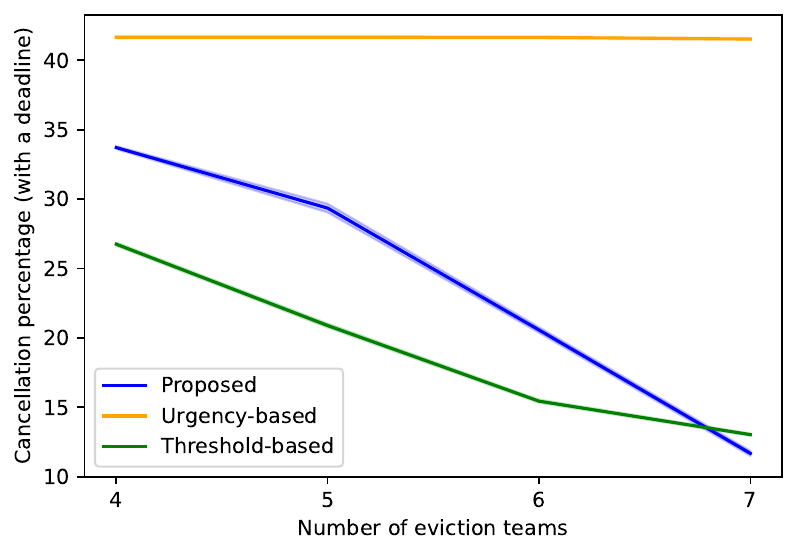}
  \caption{With a deadline}
  \label{fig:sub1}
\end{subfigure}%
\begin{subfigure}{.5\textwidth}
  \centering
  \includegraphics[width=.9\linewidth]{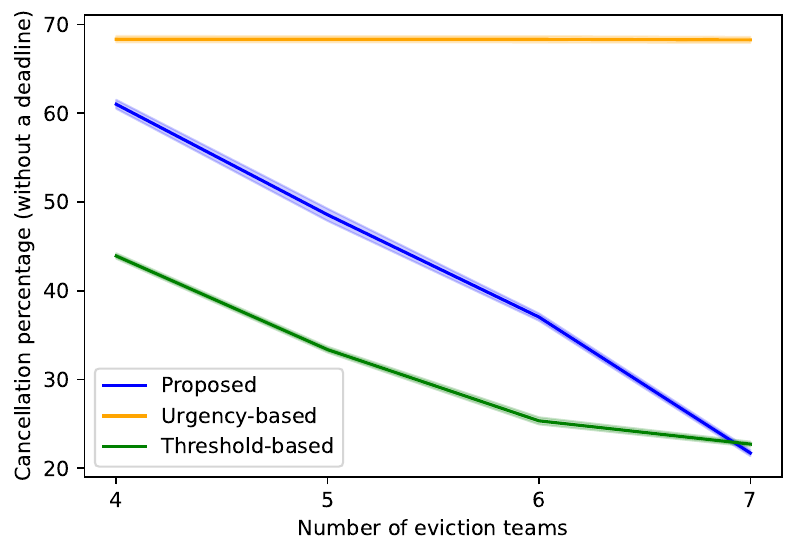}
  \caption{Without a deadline}
  \label{fig:sub2}
\end{subfigure}
\caption{Cancellation percentage as a function of eviction teams.}
\label{fig:test}
\end{figure}

\begin{figure}[h!]
\centering
\begin{subfigure}{.5\textwidth}
  \centering
  \includegraphics[width=.9\linewidth]{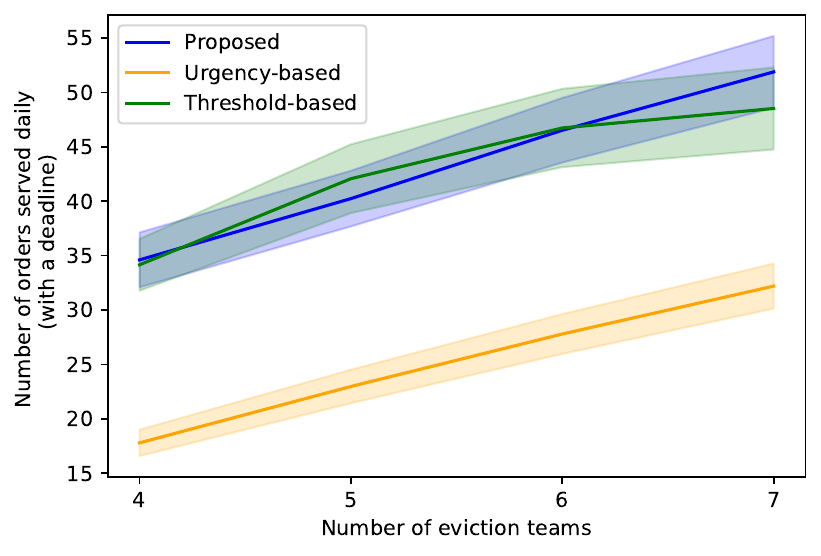}
  \caption{With a deadline}
  \label{fig:sub1}
\end{subfigure}%
\begin{subfigure}{.5\textwidth}
  \centering
  \includegraphics[width=.9\linewidth]{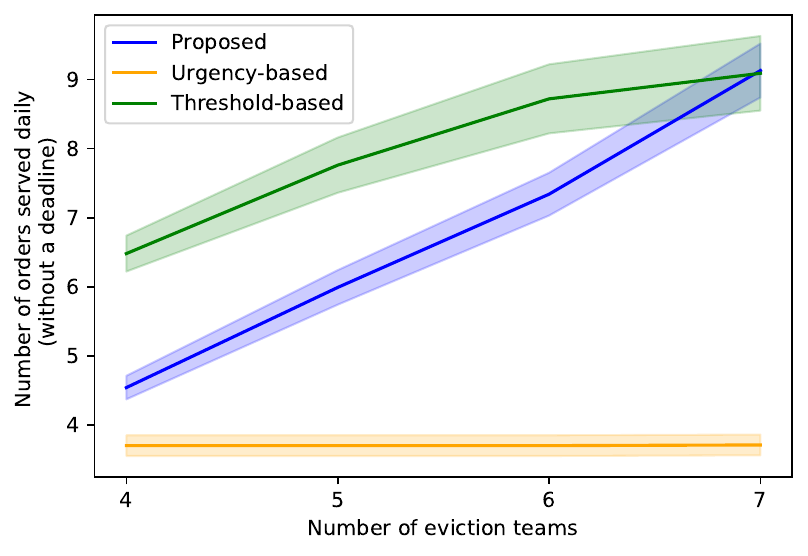}
  \caption{Without a deadline}
  \label{fig:sub2}
\end{subfigure}
\caption{Number of orders served daily as a function of eviction teams.}
\label{fig:test}
\end{figure}

\begin{figure}[h!]
\centering
\begin{subfigure}{.5\textwidth}
  \centering
  \includegraphics[width=.9\linewidth]{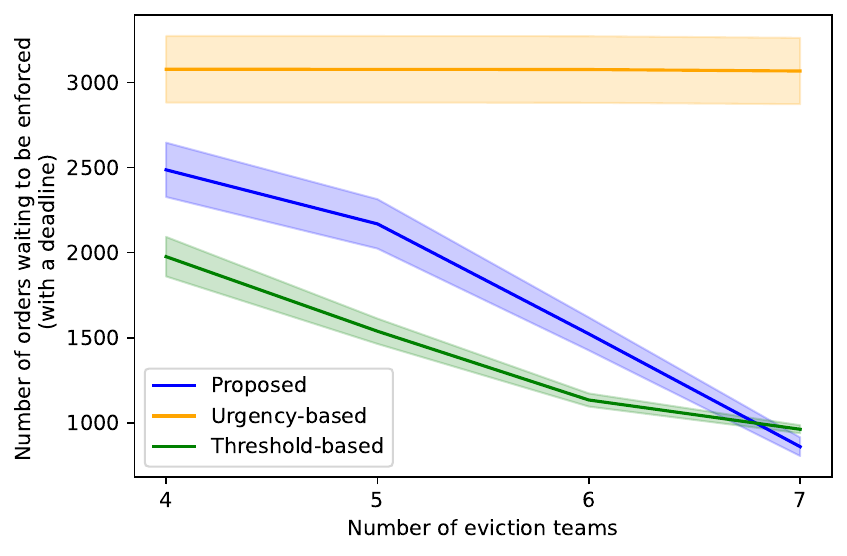}
  \caption{With a deadline}
  \label{fig:sub1}
\end{subfigure}%
\begin{subfigure}{.5\textwidth}
  \centering
  \includegraphics[width=.9\linewidth]{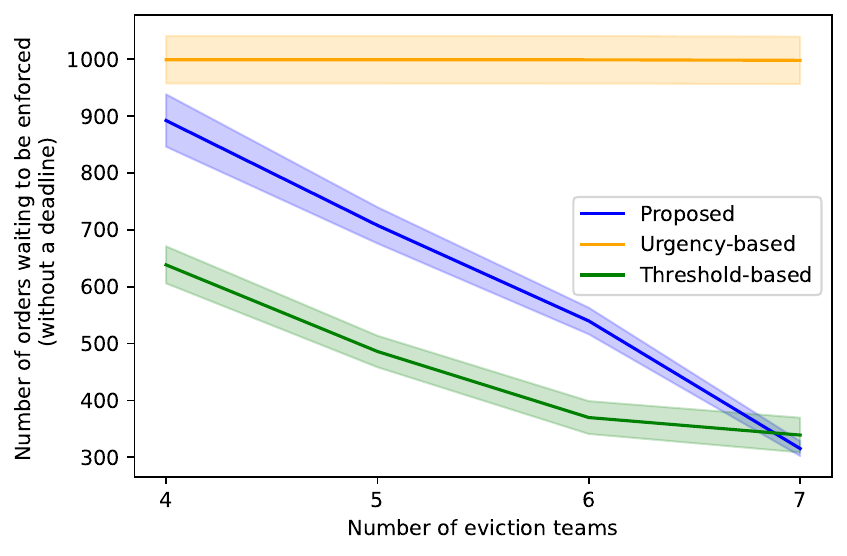}
  \caption{Without a deadline}
  \label{fig:sub2}
\end{subfigure}
\caption{Number of orders waiting to be served in terms as a function of eviction teams.}
\label{fig:Counterfactual/Counterfactual_NumVehicles_NumPending}
\end{figure}

Next, we consider a more granular adjustment to the service capacity. Namely, we study the effect of increasing the number of hours worked in a day for each vehicle, and observe a similar effect as when the number of eviction teams increases; see Figures \ref{fig:Counterfactual_NumWorkingHours_MissingDeadline}--\ref{fig:Counterfactual_NumWorkingHours_NumPending}.

\begin{figure}[h!]
    \centering
    \includegraphics[width=0.45\linewidth]{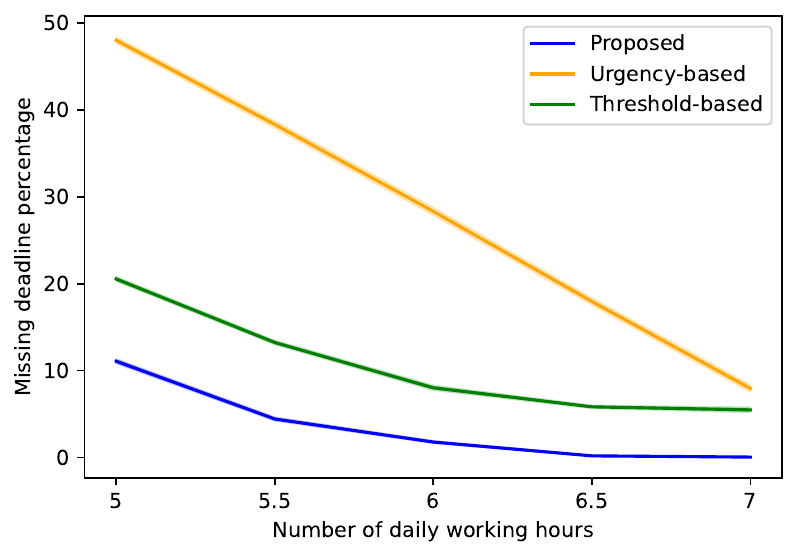}
    \caption{Missing deadline percentage as a function of daily working hours.}
    \label{fig:Counterfactual_NumWorkingHours_MissingDeadline}
\end{figure}

\begin{figure}[h!]
\centering
\begin{subfigure}{.5\textwidth}
  \centering
  \includegraphics[width=.9\linewidth]{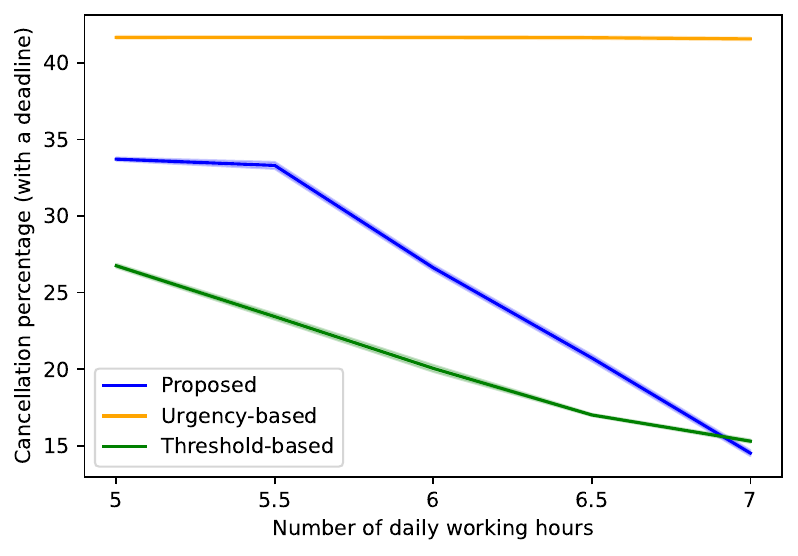}
  \caption{With a deadline}
  \label{fig:sub1}
\end{subfigure}%
\begin{subfigure}{.5\textwidth}
  \centering
  \includegraphics[width=.9\linewidth]{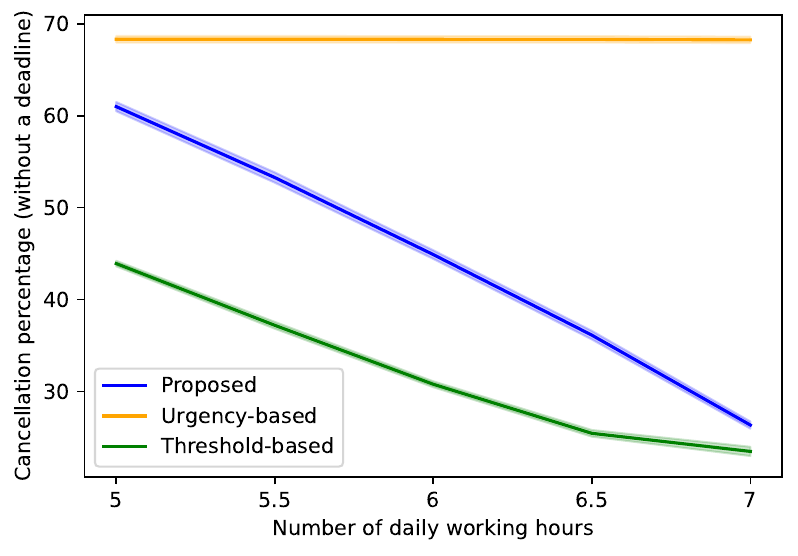}
  \caption{Without a deadline}
  \label{fig:sub2}
\end{subfigure}
\caption{Cancellation percentage as a function of daily working hours.}
\label{fig:test}
\end{figure}

\begin{figure}[h!]
\centering
\begin{subfigure}{.5\textwidth}
  \centering
  \includegraphics[width=.9\linewidth]{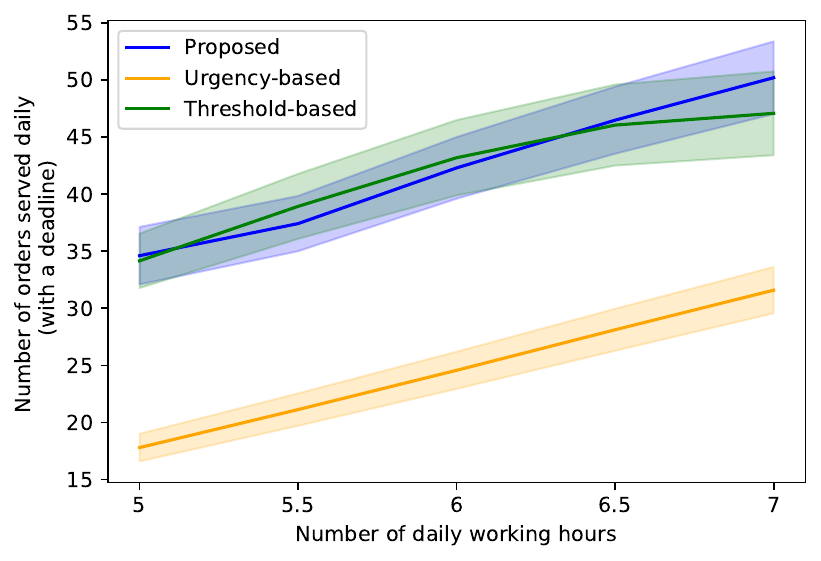}
  \caption{With a deadline}
  \label{fig:sub1}
\end{subfigure}%
\begin{subfigure}{.5\textwidth}
  \centering
  \includegraphics[width=.9\linewidth]{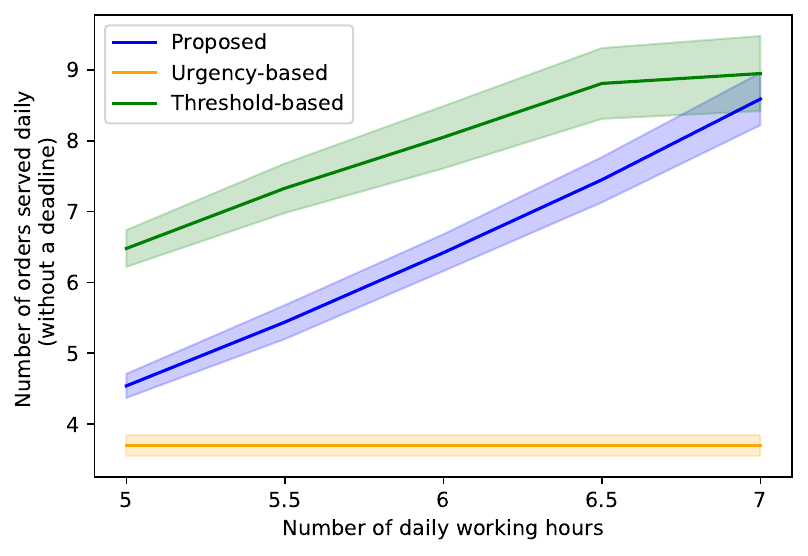}
  \caption{Without a deadline}
  \label{fig:sub2}
\end{subfigure}
\caption{Number of orders served daily as a function of daily working hours.}
\label{fig:Counterfactual_NumWorkingHours_NumServed}
\end{figure}

\begin{figure}[h!]
\centering
\begin{subfigure}{.5\textwidth}
  \centering
  \includegraphics[width=.9\linewidth]{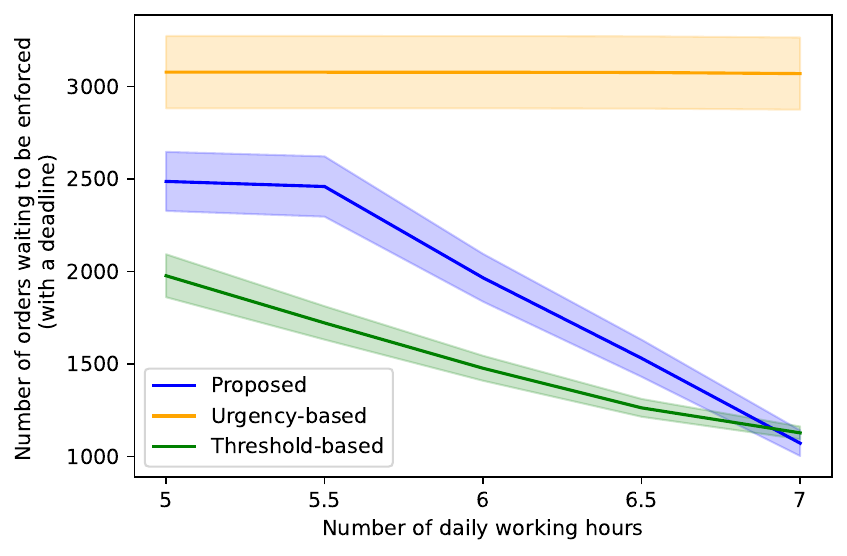}
  \caption{With a deadline}
  \label{fig:sub1}
\end{subfigure}%
\begin{subfigure}{.5\textwidth}
  \centering
  \includegraphics[width=.9\linewidth]{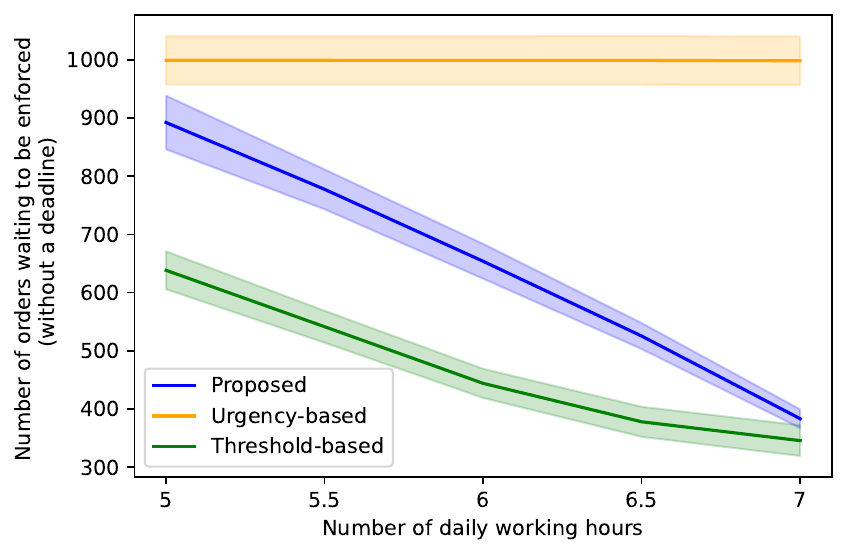}
  \caption{Without a deadline}
  \label{fig:sub2}
\end{subfigure}
\caption{Number of orders waiting to be served as a function of daily working hours.}
\label{fig:Counterfactual_NumWorkingHours_NumPending}
\end{figure}

Finally, we present the effects of extending the deadline by an additional 10, 20, and 30 days; see Figures \ref{fig:Counterfactual_LongerDeadline_MissingDeadline}--\ref{fig:Counterfactual_LongerDeadline_NumPending}. We observe that the percentage of missing deadline decreases as the deadline is extended longer. On the other hand, the cancellation percentage for the orders with a deadline increases as the deadline is extended longer, which is expected as a longer deadline gives those orders a higher chance to be canceled. Another interesting finding is that the proposed policy results in a higher number of daily served orders without a deadline as the deadline of other orders is extended longer. This is due to the fact that the proposed policy assigns a higher prize to the orders with a deadline when the deadline approaches. When the deadline is extended, it assigns them a higher prize later that results in  shifting the service effort slightly to the orders without a deadline.

\begin{figure}[h!]
    \centering
    \includegraphics[width=0.45\linewidth]{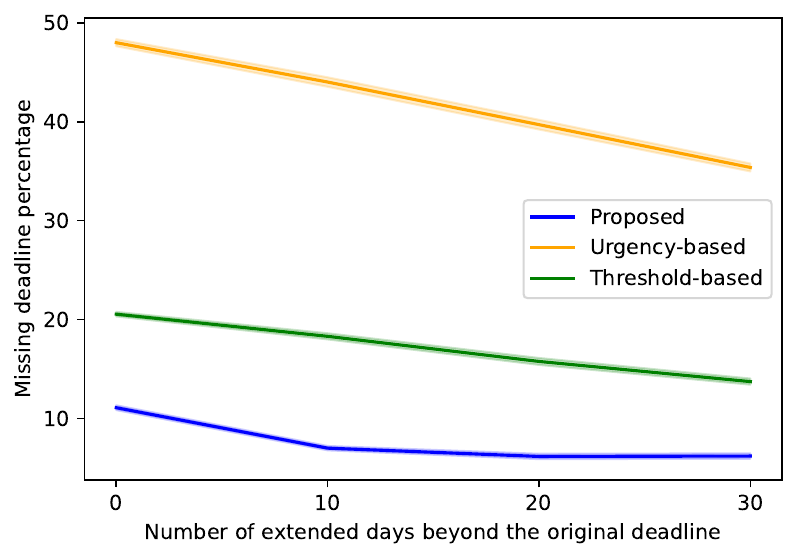}
    \caption{Missing deadline percentage as a function
    of daily working hours.}
    \label{fig:Counterfactual_LongerDeadline_MissingDeadline}
\end{figure}

\begin{figure}[h!]
\centering
\begin{subfigure}{.5\textwidth}
  \centering
  \includegraphics[width=.9\linewidth]{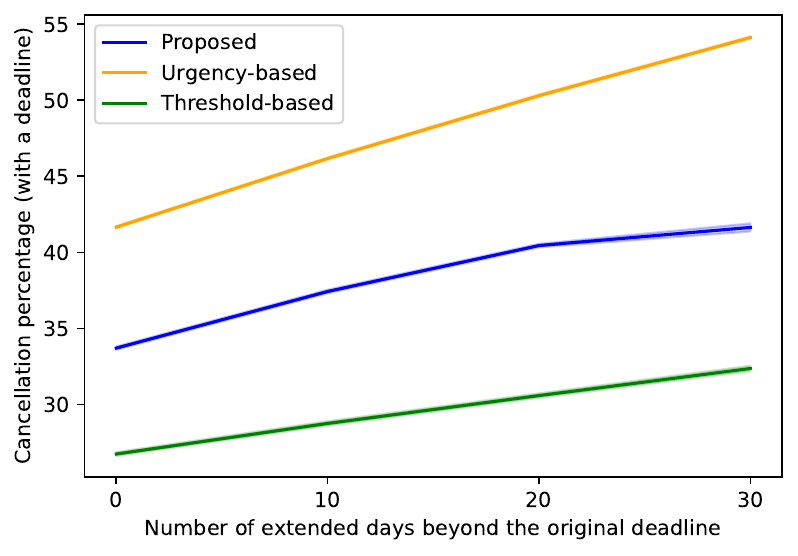}
  \caption{With a deadline}
  \label{fig:sub1}
\end{subfigure}%
\begin{subfigure}{.5\textwidth}
  \centering
  \includegraphics[width=.9\linewidth]{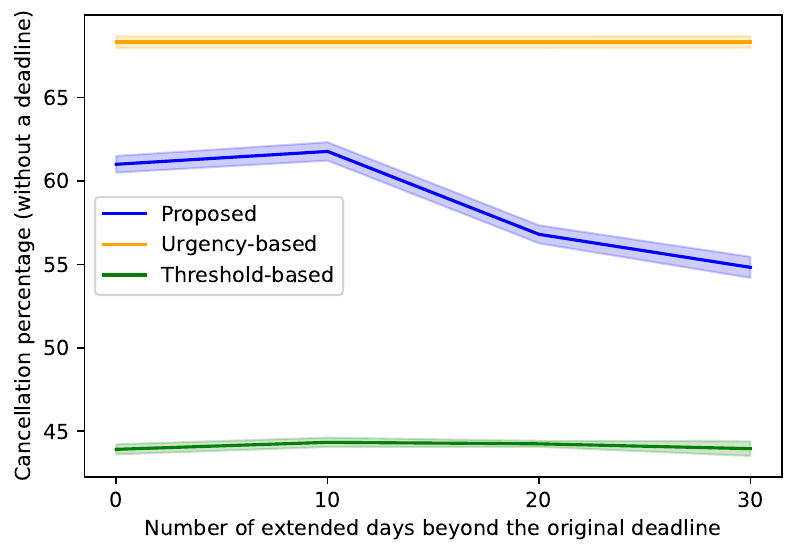}
  \caption{Without a deadline}
  \label{fig:sub2}
\end{subfigure}
\caption{Cancellation percentage as a function of daily working hours.}
\label{fig:test}
\end{figure}

\begin{figure}[h!]
\centering
\begin{subfigure}{.5\textwidth}
  \centering
  \includegraphics[width=.9\linewidth]{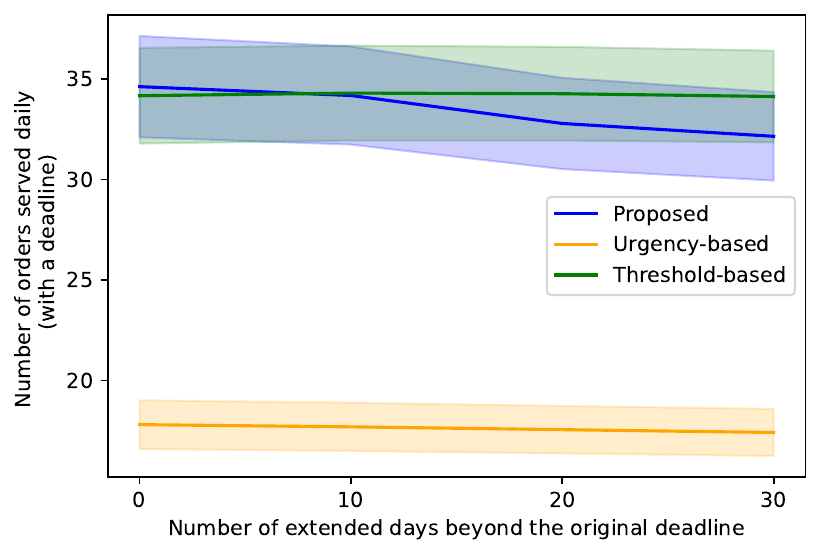}
  \caption{With a deadline}
  \label{fig:sub1}
\end{subfigure}%
\begin{subfigure}{.5\textwidth}
  \centering
  \includegraphics[width=.9\linewidth]{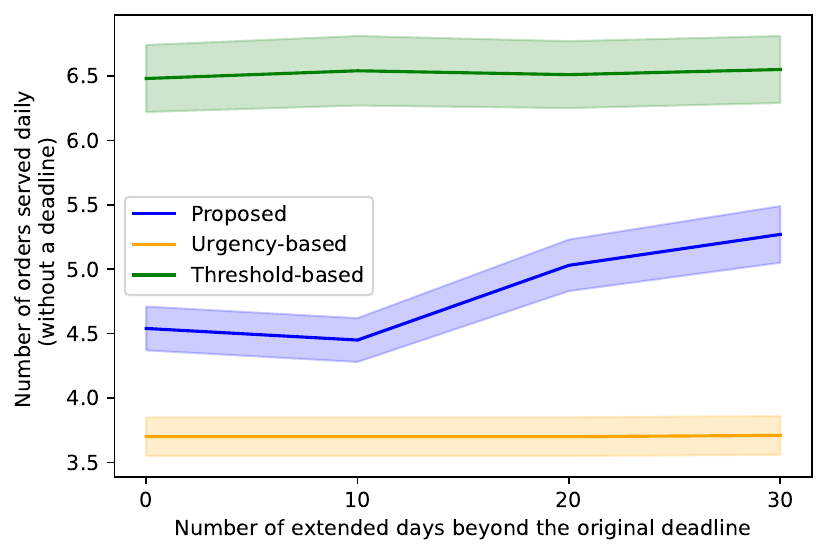}
  \caption{Without a deadline}
  \label{fig:sub2}
\end{subfigure}
\caption{Number of orders served daily as a function of daily working hours.}
\label{fig:Counterfactual_LongerDeadline_NumServed}
\end{figure}

\begin{figure}[h!]
\centering
\begin{subfigure}{.5\textwidth}
  \centering
  \includegraphics[width=.9\linewidth]{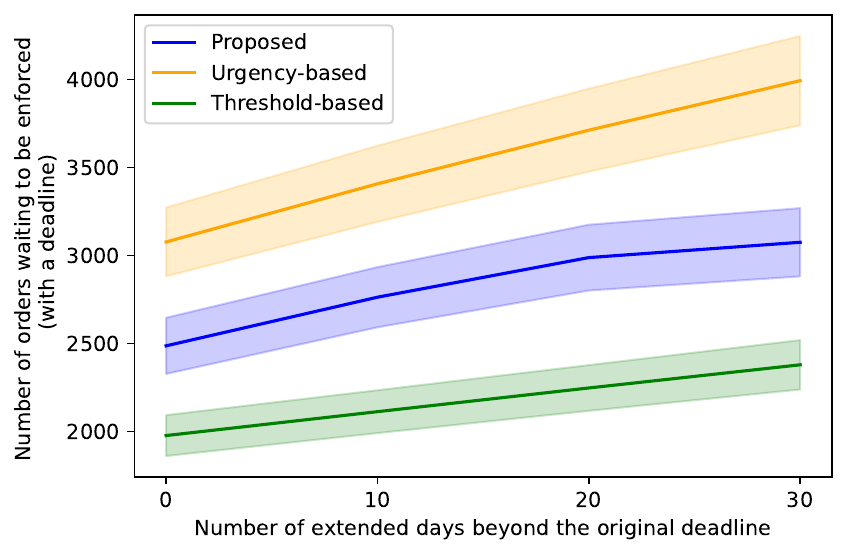}
  \caption{With a deadline}
  \label{fig:sub1}
\end{subfigure}%
\begin{subfigure}{.5\textwidth}
  \centering
  \includegraphics[width=.9\linewidth]{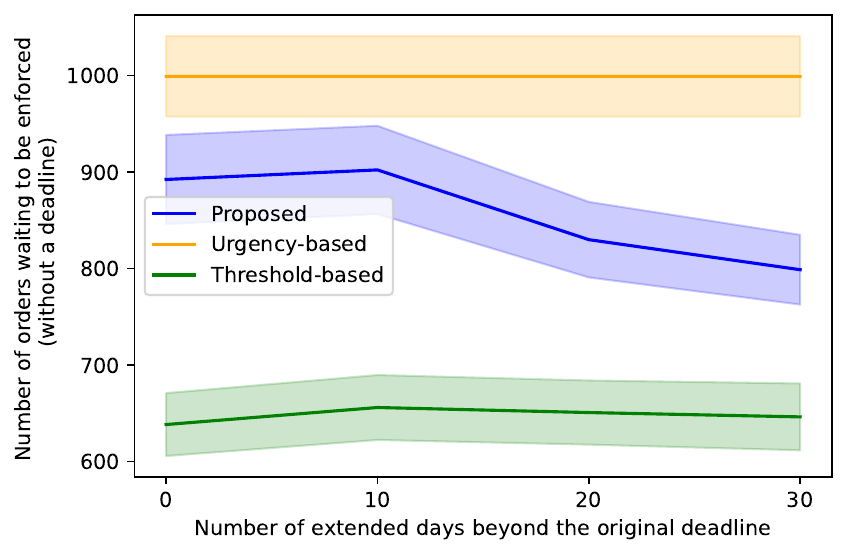}
  \caption{Without a deadline}
  \label{fig:sub2}
\end{subfigure}
\caption{Number of orders waiting to be enforced in terms of different number of daily working hours.}
\label{fig:Counterfactual_LongerDeadline_NumPending}
\end{figure}

\section{Concluding Remarks}

Our study is motivated by the eviction enforcement operations and the data from the CCSO, where the system manager must balance geographical coverage (as pending orders span the entirety of Cook County) and time constraints (as most eviction orders come with an enforcement deadline) when planning daily eviction operations, including the selection of orders to serve and the route to follow. Additionally, daily orders arrive with uncertainty, and cancellations are possible, which should be considered so that the daily decision is not myopic. When facing the trade-off between ``equity'' and ``efficiency'' in eviction enforcement planning, a FCFS policy, which emphasizes equity, may result in serving geographical dispersed orders on each day, thereby wasting resources. Conversely, focusing mainly on nearby orders to improve efficiency can increase the percentage of orders missing their deadlines. 

To address this trade-off, we propose a policy and compare it with two benchmark policies and with the CCSO data. 
Our proposed approach selects orders and routes endogenously by solving a budgeted prize-collecting VRP, where the prize is determined by solving a Brownian control problem. This policy also allows the decision maker to adjust service effort between orders with and without deadlines by setting relative holding costs for each type. 
Our results show that the proposed policy reduces the percentage of missed deadlines by 72.38\% compared to the CCSO data and also outperforms the benchmark policies. Notably, minimizing missing deadlines percentage requires prioritizing orders with deadlines, which leads to a higher cancellation rates by about 50\%. This is because service efforts are more effectively directed toward orders nearing their deadlines, giving other orders more time to self-resolve (i.e., through cancellation), which can be a more desired outcome. Recall that a cancellation is not necessarily an undesirable outcome as it is often due to the landlord and the tenant reaching an agreement. Moreover, by adjusting the holding cost for each order type, the system manager can manage the trade-offs in cancellation rates and the number of orders waiting. These trade-offs also provide insights for eviction enforcement planning; see Figures \ref{fig:MissDieAndCancellation_WithoutDie}--\ref{fig:MissDieAndNumPending}.

We also conduct a counterfactual analysis to explore scenarios with increased service capacity (through more eviction teams and extended working hours) and with extended deadlines. Our results  show that missed deadlines can be further reduced in both scenarios, but for different reasons. With increased service capacity, more orders are served, leading to lower cancellation rates and smaller number of orders waiting to be enforced. Conversely, extending deadlines reduces missed deadlines by allowing orders to stay longer in the system, which increases both the cancellation rate and the number of pending orders.

Lastly, it is worth noting that the data used in Section \ref{sec:Simulation} is collected during 2015 -- 2019, i.e., before the pandemic. During the pandemic, CCSO’s eviction operation were disrupted. As a result, our data set may not represent the current system state. Thus, we emphasize that our numerical study represents the CCSO’s eviction operations during 2015 -- 2019 and its conclusions should be interpreted in that context. 


\bibliographystyle{plainnat}
\bibliography{myref.bib}

\begin{thebibliography}{72}
\providecommand{\natexlab}[1]{#1}
\providecommand{\url}[1]{\texttt{#1}}
\expandafter\ifx\csname urlstyle\endcsname\relax
  \providecommand{\doi}[1]{doi: #1}\else
  \providecommand{\doi}{doi: \begingroup \urlstyle{rm}\Url}\fi

\bibitem[Albareda-Sambola et~al.(2014)Albareda-Sambola, Fernández, and
  Laporte]{AlbaredaSambola2014}
Maria Albareda-Sambola, Elena Fernández, and Gilbert Laporte.
\newblock The dynamic multiperiod vehicle routing problem with probabilistic
  information.
\newblock \emph{Computers \& Operations Research}, 48:\penalty0 31--39, 2014.

\bibitem[Alwan et~al.(2024)Alwan, Ata, and Zhou]{Alwan2024}
Amir~A. Alwan, Baris Ata, and Yuwei Zhou.
\newblock A queueing model of dynamic pricing and dispatch control for
  ride-hailing systems incorporating travel times.
\newblock \emph{Queueing Systems}, 106\penalty0 (1):\penalty0 1--66, 2024.

\bibitem[Angelelli et~al.(2007{\natexlab{a}})Angelelli, Grazia~Speranza, and
  Savelsbergh]{Angelelli2007a}
Enrico Angelelli, M.~Grazia~Speranza, and Martin~W.P. Savelsbergh.
\newblock Competitive analysis for dynamic multiperiod uncapacitated routing
  problems.
\newblock \emph{Networks}, 49\penalty0 (4):\penalty0 308--317,
  2007{\natexlab{a}}.

\bibitem[Angelelli et~al.(2007{\natexlab{b}})Angelelli, Savelsbergh, and
  {Grazia Speranza}]{Angelelli2007b}
Enrico Angelelli, Martin~W.P. Savelsbergh, and M.~{Grazia Speranza}.
\newblock Competitive analysis of a dispatch policy for a dynamic multi-period
  routing problem.
\newblock \emph{Operations Research Letters}, 35\penalty0 (6):\penalty0
  713--721, 2007{\natexlab{b}}.

\bibitem[Ata(2006)]{Ata2006}
Bar\i{}\c{s} Ata.
\newblock Dynamic control of a multiclass queue with thin arrival streams.
\newblock \emph{Operations Research}, 54\penalty0 (5):\penalty0 876--892, 2006.

\bibitem[Ata and Barjesteh(2023)]{AtaBarjesteh2023}
Baris Ata and Nasser Barjesteh.
\newblock An approximate analysis of dynamic pricing, outsourcing, and
  scheduling policies for a multiclass make-to-stock queue in the heavy traffic
  regime.
\newblock \emph{Operations Research}, 71\penalty0 (1):\penalty0 341--357, 2023.

\bibitem[Ata and Kasikaralar(2023)]{AtaKasikaralar2023}
Baris Ata and Ebru Kasikaralar.
\newblock Dynamic scheduling of a multiclass queue in the halfin-whitt regime:
  A computational approach for high-dimensional problems.
\newblock Working paper, Univeristy of Chicago, 2023.

\bibitem[Ata et~al.(2019)Ata, Lee, and S\"{o}nmez]{Ata2019}
Baris Ata, Deishin Lee, and Erkut S\"{o}nmez.
\newblock Dynamic volunteer staffing in multicrop gleaning operations.
\newblock \emph{Operations Research}, 67\penalty0 (2):\penalty0 295--314, 2019.

\bibitem[Ata et~al.(2024{\natexlab{a}})Ata, Harrison, and Si]{Ata2023}
Baris Ata, J.~Michael Harrison, and Nian Si.
\newblock Drift control of high-dimensional reflected brownian motion: A
  computational method based on neural networks.
\newblock \emph{Stochastic Systems}, 0\penalty0 (0):\penalty0 null,
  2024{\natexlab{a}}.

\bibitem[Ata et~al.(2024{\natexlab{b}})Ata, Harrison, and Si]{Ata2023b}
Baris Ata, J.~Michael Harrison, and Nian Si.
\newblock Singular control of (reflected) brownian motion: a computational
  method suitable for queueing applications.
\newblock \emph{Queueing Systems}, 108\penalty0 (3):\penalty0 215--251,
  2024{\natexlab{b}}.

\bibitem[Ata et~al.(2024{\natexlab{c}})Ata, Lee, and Tongarlak]{Ata2024b}
Baris Ata, Deishin Lee, and Mustafa~Hayri Tongarlak.
\newblock A diffusion model of dynamic participant inflow management.
\newblock \emph{Queueing Systems}, 2024{\natexlab{c}}.

\bibitem[Ata et~al.(2024{\natexlab{d}})Ata, Tongarlak, Lee, and
  Field]{Ata2024a}
Baris Ata, Mustafa~H. Tongarlak, Deishin Lee, and Joy Field.
\newblock A dynamic model for managing volunteer engagement.
\newblock \emph{Operations Research}, 2024{\natexlab{d}}.

\bibitem[Ata et~al.(2005)Ata, Harrison, and Shepp]{Ata2005b}
Bariş Ata, J.~M. Harrison, and L.~A. Shepp.
\newblock {Drift rate control of a Brownian processing system}.
\newblock \emph{The Annals of Applied Probability}, 15\penalty0 (2):\penalty0
  1145 -- 1160, 2005.

\bibitem[Azi et~al.(2012)Azi, Gendreau, and Potvin]{Azi2012}
Nabila Azi, Michel Gendreau, and Jean-Yves Potvin.
\newblock A dynamic vehicle routing problem with multiple delivery routes.
\newblock \emph{Annals of Operations Research}, 199\penalty0 (1):\penalty0
  103--112, 2012.

\bibitem[Baldacci et~al.(2023)Baldacci, Hoshino, and Hill]{Baldacci2023}
Roberto Baldacci, Edna~A. Hoshino, and Alessandro Hill.
\newblock New pricing strategies and an effective exact solution framework for
  profit-oriented ring arborescence problems.
\newblock \emph{European Journal of Operational Research}, 307\penalty0
  (2):\penalty0 538--553, 2023.

\bibitem[Beck et~al.(2023)Beck, Hutzenthaler, Jentzen, and Kuckuck]{Beck2023}
Christian Beck, Martin Hutzenthaler, Arnulf Jentzen, and Benno Kuckuck.
\newblock An overview on deep learning-based approximation methods for partial
  differential equations.
\newblock \emph{Discrete and Continuous Dynamical Systems - B}, 28\penalty0
  (6):\penalty0 3697--3746, 2023.

\bibitem[Bertsimas(1992)]{Bertsimas1992}
Dimitris~J. Bertsimas.
\newblock A vehicle routing problem with stochastic demand.
\newblock \emph{Operations Research}, 40\penalty0 (3):\penalty0 574--585, 1992.

\bibitem[Carlsson et~al.(2024)Carlsson, Liu, Salari, and Yu]{Carlsson2024}
John~Gunnar Carlsson, Sheng Liu, Nooshin Salari, and Han Yu.
\newblock Provably good region partitioning for on-time last-mile delivery.
\newblock \emph{Operations Research}, 72\penalty0 (1):\penalty0 91--109, 2024.

\bibitem[\c{C}elik and Maglaras(2008)]{CelikMaglaras2008}
Sabri \c{C}elik and Costis Maglaras.
\newblock Dynamic pricing and lead-time quotation for a multiclass
  make-to-order queue.
\newblock \emph{Management Science}, 54\penalty0 (6):\penalty0 1132--1146,
  2008.

\bibitem[Chessari et~al.(2023)Chessari, Kawai, Shinozaki, and
  Yamada]{Chessari2023}
Jared Chessari, Reiichiro Kawai, Yuji Shinozaki, and Toshihiro Yamada.
\newblock Numerical methods for backward stochastic differential equations: A
  survey.
\newblock \emph{Probability Surveys}, 20, Jan 2023.

\bibitem[Cole et~al.(2001)Cole, Hariharan, Lewenstein, and Porat]{Cole2001}
Richard~J. Cole, Ramesh Hariharan, Moshe Lewenstein, and Ely Porat.
\newblock A faster implementation of the goemans-williamson clustering
  algorithm.
\newblock In \emph{ACM-SIAM Symposium on Discrete Algorithms}, 2001.

\bibitem[Dantzig and Ramser(1959)]{Dantzig1959}
G.~B. Dantzig and J.~H. Ramser.
\newblock The truck dispatching problem.
\newblock \emph{Management Science}, 6\penalty0 (1):\penalty0 80--91, 1959.

\bibitem[Desmond(2012)]{Desmond2012}
Matthew Desmond.
\newblock Eviction and the reproduction of urban poverty.
\newblock \emph{American Journal of Sociology}, 118\penalty0 (1):\penalty0
  88--133, 2012.

\bibitem[Desmond(2022)]{Desmond2022}
Matthew Desmond.
\newblock \emph{Unaffordable America: Poverty, housing, and eviction}.
\newblock 2022.

\bibitem[E et~al.(2021)E, Han, and Jentzen]{E2021}
Weinan E, Jiequn Han, and Arnulf Jentzen.
\newblock Algorithms for solving high dimensional pdes: from nonlinear monte
  carlo to machine learning.
\newblock \emph{Nonlinearity}, 35\penalty0 (1):\penalty0 278, dec 2021.

\bibitem[El-Hajj et~al.(2016)El-Hajj, Dang, and Moukrim]{ElHajj2016}
Racha El-Hajj, Duc-Cuong Dang, and Aziz Moukrim.
\newblock Solving the team orienteering problem with cutting planes.
\newblock \emph{Computers \& Operations Research}, 74:\penalty0 21--30, 2016.

\bibitem[Feillet et~al.(2005)Feillet, Dejax, and Gendreau]{Feillet2005b}
Dominique Feillet, Pierre Dejax, and Michel Gendreau.
\newblock The profitable arc tour problem: Solution with a branch-and-price
  algorithm.
\newblock \emph{Transportation Science}, 39\penalty0 (4):\penalty0 539--552,
  2005.

\bibitem[Flatberg et~al.(2007)Flatberg, Hasle, Kloster, Nilssen, and
  Riise]{Flatberg2007}
Truls Flatberg, Geir Hasle, Oddvar Kloster, Eivind~J. Nilssen, and Atle Riise.
\newblock \emph{Dynamic And Stochastic Vehicle Routing In Practice}, pages
  41--63.
\newblock Springer US, Boston, MA, 2007.

\bibitem[Fleming and Soner(2006)]{FlemingSoner2006}
Wendell~H. Fleming and H.M. Soner.
\newblock \emph{Controlled Markov Processes and Viscosity Solutions}.
\newblock Springer New York, NY, 2006.

\bibitem[Flood(1956)]{Flood1956}
Merrill~M. Flood.
\newblock The traveling-salesman problem.
\newblock \emph{Operations Research}, 4\penalty0 (1):\penalty0 61--75, 1956.

\bibitem[Gendreau et~al.(1996)Gendreau, Laporte, and Séguin]{Gendreau1996}
Michel Gendreau, Gilbert Laporte, and René Séguin.
\newblock Stochastic vehicle routing.
\newblock \emph{European Journal of Operational Research}, 88\penalty0
  (1):\penalty0 3--12, 1996.

\bibitem[Ghosh and Weerasinghe(2007)]{GhoshWeerasinghe2007}
Arka~P. Ghosh and Ananda~P. Weerasinghe.
\newblock Optimal buffer size for a stochastic processing network in heavy
  traffic.
\newblock \emph{Queueing Systems}, 55\penalty0 (3):\penalty0 147--159, 2007.

\bibitem[Ghosh and Weerasinghe(2010)]{GhoshWeerasinghe2010}
Arka~P. Ghosh and Ananda~P. Weerasinghe.
\newblock Optimal buffer size and dynamic rate control for a queueing system
  with impatient customers in heavy traffic.
\newblock \emph{Stochastic Processes and their Applications}, 120\penalty0
  (11):\penalty0 2103--2141, 2010.

\bibitem[Gilbarg and Trudinger(2001)]{GilbargTrudinger2001}
David Gilbarg and Neil~S. Trudinger.
\newblock \emph{Elliptic Partial Differential Equations of Second Order}.
\newblock Springer Berlin, Heidelberg, 2001.

\bibitem[Goemans and Williamson(1995)]{GoemansWilliamson1995}
Michel~X. Goemans and David~P. Williamson.
\newblock A general approximation technique for constrained forest problems.
\newblock \emph{SIAM Journal on Computing}, 24\penalty0 (2):\penalty0 296--317,
  1995.

\bibitem[Goodson et~al.(2013)Goodson, Ohlmann, and Thomas]{Goodson2013}
Justin~C. Goodson, Jeffrey~W. Ohlmann, and Barrett~W. Thomas.
\newblock Rollout policies for dynamic solutions to the multivehicle routing
  problem with stochastic demand and duration limits.
\newblock \emph{Operations Research}, 61\penalty0 (1):\penalty0 138--154, 2013.

\bibitem[Gromis et~al.(2022)Gromis, Fellows, Hendrickson, Edmonds, Leung,
  Porton, and Desmond]{Gromis2022}
Ashley Gromis, Ian Fellows, James~R. Hendrickson, Lavar Edmonds, Lillian Leung,
  Adam Porton, and Matthew Desmond.
\newblock Estimating eviction prevalence across the united states.
\newblock \emph{Proceedings of the National Academy of Sciences}, 119\penalty0
  (21):\penalty0 e2116169119, 2022.

\bibitem[Han et~al.(2018)Han, Jentzen, and E]{Han2018}
Jiequn Han, Arnulf Jentzen, and Weinan E.
\newblock Solving high-dimensional partial differential equations using deep
  learning.
\newblock \emph{Proceedings of the National Academy of Sciences}, 115\penalty0
  (34):\penalty0 8505--8510, 2018.

\bibitem[Harrison(1988)]{Harrison1988}
J.~Michael Harrison.
\newblock Brownian models of queueing networks with heterogeneous customer
  populations.
\newblock In Wendell Fleming and Pierre-Louis Lions, editors, \emph{Stochastic
  Differential Systems, Stochastic Control Theory and Applications}, pages
  147--186, New York, NY, 1988. Springer New York.

\bibitem[Harrison(2000)]{Harrison2000}
J.~Michael Harrison.
\newblock {Brownian models of open processing networks: canonical
  representation of workload}.
\newblock \emph{The Annals of Applied Probability}, 10\penalty0 (1):\penalty0
  75 -- 103, 2000.

\bibitem[Harrison(2003)]{Harrison2003}
J.~Michael Harrison.
\newblock {A broader view of Brownian networks}.
\newblock \emph{The Annals of Applied Probability}, 13\penalty0 (3):\penalty0
  1119 -- 1150, 2003.

\bibitem[Harrison(2013)]{Harrison2013}
J.~Michael Harrison.
\newblock \emph{Brownian Models of Performance and Control}.
\newblock Cambridge University Press, 2013.

\bibitem[Harrison and Reiman(1981)]{HarrisonReiman1981}
J.~Michael Harrison and Martin~I. Reiman.
\newblock On the distribution of multidimensional reflected brownian motion.
\newblock \emph{SIAM Journal on Applied Mathematics}, 41\penalty0 (2):\penalty0
  345--361, 1981.

\bibitem[Harrison and Wein(1989)]{HarrisonWein1989}
J.~Michael Harrison and Lawrence~M. Wein.
\newblock Scheduling networks of queues: Heavy traffic analysis of a simple
  open network.
\newblock \emph{Queueing Systems}, 5\penalty0 (4):\penalty0 265--279, 1989.

\bibitem[Harrison and Wein(1990)]{HarrisonWein1990}
J.~Michael Harrison and Lawrence~M. Wein.
\newblock Scheduling networks of queues: Heavy traffic analysis of a
  two-station closed network.
\newblock \emph{Operations Research}, 38\penalty0 (6):\penalty0 1052--1064,
  1990.

\bibitem[Hegde et~al.(2015{\natexlab{a}})Hegde, Indyk, and Schmidt]{Hegde2015}
Chinmay Hegde, Piotr Indyk, and Ludwig Schmidt.
\newblock A fast , adaptive variant of the goemans-williamson scheme for the
  prize-collecting steiner tree problem.
\newblock 2015{\natexlab{a}}.

\bibitem[Hegde et~al.(2015{\natexlab{b}})Hegde, Indyk, and Schmidt]{Hegde2015b}
Chinmay Hegde, Piotr Indyk, and Ludwig Schmidt.
\newblock A nearly-linear time framework for graph-structured sparsity.
\newblock In \emph{Proceedings of the 32nd International Conference on
  International Conference on Machine Learning - Volume 37}, ICML'15, page
  928–937. JMLR.org, 2015{\natexlab{b}}.

\bibitem[Keskin et~al.(2023)Keskin, Branke, Deineko, and Strauss]{Keskin2023}
Merve Keskin, Juergen Branke, Vladimir Deineko, and Arne~K. Strauss.
\newblock Dynamic multi-period vehicle routing with touting.
\newblock \emph{European Journal of Operational Research}, 310\penalty0
  (1):\penalty0 168--184, 2023.

\bibitem[Klapp et~al.(2018)Klapp, Erera, and Toriello]{Klapp2018}
Mathias~A. Klapp, Alan~L. Erera, and Alejandro Toriello.
\newblock The one-dimensional dynamic dispatch waves problem.
\newblock \emph{Transportation Science}, 52\penalty0 (2):\penalty0 402--415,
  2018.

\bibitem[Laporte et~al.(1992)Laporte, Louveaux, and Mercure]{Laporte1992}
Gilbert Laporte, Fran\c{c}ois Louveaux, and H\'{e}l\`{e}ne Mercure.
\newblock The vehicle routing problem with stochastic travel times.
\newblock \emph{Transportation Science}, 26\penalty0 (3):\penalty0 161--170,
  1992.

\bibitem[Liu and Luo(2023)]{LiuLuo2023}
Sheng Liu and Zhixing Luo.
\newblock On-demand delivery from stores: Dynamic dispatching and routing with
  random demand.
\newblock \emph{Manufacturing \& Service Operations Management}, 25\penalty0
  (2):\penalty0 595--612, 2023.

\bibitem[Liu et~al.(2021)Liu, He, and Max~Shen]{Liu2021}
Sheng Liu, Long He, and Zuo-Jun Max~Shen.
\newblock On-time last-mile delivery: Order assignment with travel-time
  predictors.
\newblock \emph{Management Science}, 67\penalty0 (7):\penalty0 4095--4119,
  2021.

\bibitem[Markowitz et~al.(2000)Markowitz, Reiman, and Wein]{Markowitz2000}
David~M. Markowitz, Martin~I. Reiman, and Lawrence~M. Wein.
\newblock The stochastic economic lot scheduling problem: Heavy traffic
  analysis of dynamic cyclic policies.
\newblock \emph{Operations Research}, 48\penalty0 (1):\penalty0 136--154, 2000.

\bibitem[Pessoa et~al.(2020)Pessoa, Sadykov, Uchoa, and Vanderbeck]{Pessoa2020}
Artur Pessoa, Ruslan Sadykov, Eduardo Uchoa, and Fran{\c c}ois Vanderbeck.
\newblock A generic exact solver for vehicle routing and related problems.
\newblock \emph{Mathematical Programming}, 183\penalty0 (1):\penalty0 483--523,
  2020.

\bibitem[Pillac et~al.(2013)Pillac, Gendreau, Guéret, and
  Medaglia]{Pillac2013}
Victor Pillac, Michel Gendreau, Christelle Guéret, and Andrés~L. Medaglia.
\newblock A review of dynamic vehicle routing problems.
\newblock \emph{European Journal of Operational Research}, 225\penalty0
  (1):\penalty0 1--11, 2013.

\bibitem[Plambeck et~al.(2001)Plambeck, Kumar, and Harrison]{Plambeck2001}
Erica Plambeck, Sunil Kumar, and J.~Michael Harrison.
\newblock A multiclass queue in heavy traffic with throughput time constraints:
  Asymptotically optimal dynamic controls.
\newblock \emph{Queueing Systems}, 39\penalty0 (1):\penalty0 23--54, 2001.

\bibitem[Pon(2023)]{PeterPon2023}
Peter Pon.
\newblock {Private Communication}, 2023.

\bibitem[Reiman et~al.(1999)Reiman, Rubio, and Wein]{Reiman1999}
Martin~I. Reiman, Rodrigo Rubio, and Lawrence~M. Wein.
\newblock Heavy traffic analysis of the dynamic stochastic inventory-routing
  problem.
\newblock \emph{Transportation Science}, 33\penalty0 (4):\penalty0 361--380,
  1999.

\bibitem[Rubino and Ata(2009)]{RubinoAta2009}
Melanie Rubino and Baris Ata.
\newblock Dynamic control of a make-to-order, parallel-server system with
  cancellations.
\newblock \emph{Operations Research}, 57\penalty0 (1):\penalty0 94--108, 2009.

\bibitem[Secomandi and Margot(2009)]{Secomandi2009}
Nicola Secomandi and Fran\c{c}ois Margot.
\newblock Reoptimization approaches for the vehicle-routing problem with
  stochastic demands.
\newblock \emph{Operations Research}, 57\penalty0 (1):\penalty0 214--230, 2009.

\bibitem[Stute and Wang(1994)]{StuteWang1994}
Winfried Stute and Jane-Ling Wang.
\newblock The jackknife estimate of a kaplan-meier integral.
\newblock \emph{Biometrika}, 81\penalty0 (3):\penalty0 602--606, 1994.

\bibitem[Taş et~al.(2013)Taş, Dellaert, {van Woensel}, and {de Kok}]{Tas2013}
Duygu Taş, Nico Dellaert, Tom {van Woensel}, and Ton {de Kok}.
\newblock Vehicle routing problem with stochastic travel times including soft
  time windows and service costs.
\newblock \emph{Computers \& Operations Research}, 40\penalty0 (1):\penalty0
  214--224, 2013.

\bibitem[Toriello et~al.(2014)Toriello, Haskell, and Poremba]{Toriello2014}
Alejandro Toriello, William~B. Haskell, and Michael Poremba.
\newblock A dynamic traveling salesman problem with stochastic arc costs.
\newblock \emph{Operations Research}, 62\penalty0 (5):\penalty0 1107--1125,
  2014.

\bibitem[Toth and Vigo(2002)]{TothVigo2002}
Paolo Toth and Daniele Vigo.
\newblock \emph{The Vehicle Routing Problem}.
\newblock Society for Industrial and Applied Mathematics, 2002.

\bibitem[Ulmer et~al.(2019)Ulmer, Goodson, Mattfeld, and Hennig]{Ulmer2019}
Marlin~W. Ulmer, Justin~C. Goodson, Dirk~C. Mattfeld, and Marco Hennig.
\newblock Offline–online approximate dynamic programming for dynamic vehicle
  routing with stochastic requests.
\newblock \emph{Transportation Science}, 53\penalty0 (1):\penalty0 185--202,
  2019.

\bibitem[Voccia et~al.(2019)Voccia, Campbell, and Thomas]{Voccia2019}
Stacy~A. Voccia, Ann~Melissa Campbell, and Barrett~W. Thomas.
\newblock The same-day delivery problem for online purchases.
\newblock \emph{Transportation Science}, 53\penalty0 (1):\penalty0 167--184,
  2019.

\bibitem[Vásquez-Vera et~al.(2017)Vásquez-Vera, Palència, Magna, Mena,
  Neira, and Borrell]{VasquezVera2017}
Hugo Vásquez-Vera, Laia Palència, Ingrid Magna, Carlos Mena, Jaime Neira, and
  Carme Borrell.
\newblock The threat of home eviction and its effects on health through the
  equity lens: A systematic review.
\newblock \emph{Social Science \& Medicine}, 175:\penalty0 199--208, 2017.

\bibitem[Wein(1990{\natexlab{a}})]{Wein1990a}
Lawrence~M. Wein.
\newblock Optimal control of a two-station brownian network.
\newblock \emph{Mathematics of Operations Research}, 15\penalty0 (2):\penalty0
  215--242, 1990{\natexlab{a}}.

\bibitem[Wein(1990{\natexlab{b}})]{Wein1990b}
Lawrence~M. Wein.
\newblock Scheduling networks of queues: Heavy traffic analysis of a
  two-station network with controllable inputs.
\newblock \emph{Operations Research}, 38\penalty0 (6):\penalty0 1065--1078,
  1990{\natexlab{b}}.

\bibitem[Wen et~al.(2010)Wen, Cordeau, Laporte, and Larsen]{Wen2010}
Min Wen, Jean-François Cordeau, Gilbert Laporte, and Jesper Larsen.
\newblock The dynamic multi-period vehicle routing problem.
\newblock \emph{Computers \& Operations Research}, 37\penalty0 (9):\penalty0
  1615--1623, 2010.

\bibitem[Yang(2024)]{Yang2024}
Yu~Yang.
\newblock Deluxing: Deep lagrangian underestimate fixing for
  column-generation-based exact methods.
\newblock \emph{Operations Research}, In press, 2024.

\bibitem[Yang et~al.(2023)Yang, Yan, Cao, and Roberti]{Yang2023}
Yu~Yang, Chiwei Yan, Yufeng Cao, and Roberto Roberti.
\newblock Planning robust drone-truck delivery routes under road traffic
  uncertainty.
\newblock \emph{European Journal of Operational Research}, 309\penalty0
  (3):\penalty0 1145--1160, 2023.

\end{thebibliography}

\clearpage
\appendix
\section*{Appendix}

\renewcommand{\thealgorithm}{A.\arabic{algorithm}}
\setcounter{algorithm}{0}
\renewcommand{\thefigure}{A.\arabic{figure}}
\setcounter{figure}{0}

\section{Detailed Description of Solving Budgeted Prize-Collecting VRP}
\label{sec:BPCVRP_Appendix}

In this section, we first present related preliminary work related to solving the budgeted prize-collecting TSP in Appendix \ref{subapp:TSPPreliminaries}. We then provide a detailed description of our proposed method of solving the budgeted prize-collecting VRP in Appendix \ref{subsubsec:BPCVRP}. Appendix \ref{subsubapp:InputGraphConstruction} includes a detailed discussion on constructing one of the inputs for the proposed algorithm. Finally, we provide implementation details for our numerical study based on the CCSO dataset in Appendix \ref{subsubapp:InputParametersChoices}. 

\subsection{Preliminaries}
\label{subapp:TSPPreliminaries}

As discussed in Section \ref{sec:Introduction} and \ref{subsec:ApproximateAuxiliaryFunction}, the algorithm presented by \citet{Hegde2015} is computational efficient compared with other existing methods. Given that approximating function $H(z, v)$ requires a large number of data points and computational efficiency is crucial, our method utilizes one of the main algorithms in \citet{Hegde2015}. 

\citet{Hegde2015} consider a so-called Prize-Collecting Steiner Tree (PCST) problem, which, while different from our problem, is closely related. 
Loosely speaking, PCST aims to identify a subtree of a weighted graph that optimally balances the trade-off between the cost of the edges in the selected subtree and the prizes associated with the unconnected vertices. More specifically, we let $G=(V, E)$ be a connected, undirected, and weighted graph with edge costs $c:E\rightarrow \mathbb{R}_0^+$ and vertex prizes $\pi:V\rightarrow \mathbb{R}_0^+$. For any subgraph with vertices $V'\subseteq V$ and edges $E'\subseteq E$, we let the total prize of $V'$ as $\pi(V')=\sum_{v\in V'} \pi(v)$ and the total cost of $E'$ as $c(E')=\sum_{e\in E'} c(e)$. We further let $\overline{V'}=V\setminus V'$. Definition \ref{def:PCST} provides the PCST problem definition studied by \citet{Hegde2015}.

\begin{definition}[PCST problem]
\label{def:PCST}
Find a subtree $T=(V', E')$ of $G$ such that $c(E')+\pi(\overline{V'})$ is minimized. 
\end{definition}

Building on the work of \citet{GoemansWilliamson1995} and \citet{Cole2001}, \citet{Hegde2015} present an algorithm, named \Subroutine{PCST-FAST}, that solves the PSCT in nearly linear time. PCST-FAST takes three parameters: the undirected graph $G=(V, E)$, the edge cost $c_{ij}\geq 0$ for $i,j\in V$ with $i\not=j$, and the vertex prize $\pi_i$ for $i\in V$.
Since the focus of our work is not on the implementation details of PCST-FAST, we omit its algorithm description here. Interested reader may refer to \citet{Hegde2015} for more details. 

Although the goal of the PCST problem appears similar to our problem when a single vehicle is available, the resulting solution is a tree, whereas our problem aims to identify a tour for each vehicle. Following the method discussed in \citet{GoemansWilliamson1995}, we convert the identified tree from the PCST solution into a tour. That is, to solve the prize-collecting TSP (PCTSP), we first apply PCST-FAST and then convert the resulting tree into a tour as the solution to the PCTSP. 
Algorithm \ref{alg:PCTSP} provides the detailed algorithmic statement of solving PCTSP.  
\begin{algorithm}[h!]
	\caption{Solving the PCTSP} \label{alg:PCTSP}
	{\bf Input:} An undirected graph $G=(V, E)$, edge costs $c_{ij}\geq 0$ for $i,j\in V$ and $i\not=j$, vertex prize $\pi_i\geq 0$ for $i\in V$, and a root vertex $r$. 
 
	{\bf Output:} A tour $T$, which includes vertex $r\in V$. 
	\begin{algorithmic}[1]
        \Function{PCTSP}{$G, c, \pi, r$}
		\State Apply $\text{PCST-FAST}(G, c, \pi', r)$ to the problem instance with graph $G$, edge costs $c$, root $r$, and prizes $\pi_i'=\pi_i/2$ for $i\in V$. Obtain the solution as a subgraph $F$. 
		\State Duplicate the edges of $F$ to form an Eulerian graph $F'$. 
		\State Form an Eulerian circuit in $F'$ and then make the identified Eulerian circuit into a Hamiltonian circuit $T$ by skipping repeated vertices (shortcutting). 
        \State \Return $T$
        \EndFunction
	\end{algorithmic}
\end{algorithm}

\subsection{Algorithm to Solve Budgeted Prize-Collecting VRP}
\label{subsubsec:BPCVRP}

To solve the budgeted prize-collecting VRP, we build on Algorithm \ref{alg:PCTSP} with two additional requirement: {\it i}) the total travel and service time of each vehicle cannot exceed the maximum daily working hours, and {\it ii}) multiple vehicles are considered. First, we discuss a proposed procedure that addresses the budget constraint on travel costs when a single vehicle is considered. We then discuss how this procedure can be modified to incorporate multiple vehicles. 

\paragraph{Budget constraint on edge cost.}
To account for both travel time and service time, we assign the cost of each edge $e=(i,j)$, where $i,j\in V$ are the two vertices connected by $e$, as the sum of the travel time on edge $e$ and the service time required for serving the order at vertex $j$; see Section \ref{subsec:Data} for details on how travel time and service time are estimated. Recall from Definition \ref{def:PCST} that the tour $T=(E', V')$ identified by Algorithm \ref{alg:PCTSP} aims to minimize $c(E')+\pi(\overline{V'})$. However, this objective does not fully match our goal of maximizing $\pi(V')$ (which is equivalent to minimizing $\pi(\overline{V'})$) while also enforcing a constraint that $c(E')$ remains below the maximum daily working hours. 

To address this, inspired by \citet{Hegde2015b}, we introduce a scaling parameter $\zeta>0$ that adjusts the prizes associated to all vertices through iterations updates by modifying the prize as $\zeta \pi_i$ for order $i\in V$. One may see that $\zeta$ needs to be neither too large nor too small. If $\zeta$ is significantly large, the algorithm prioritizes obtaining higher total reward over minimizing total travel cost, making the identified tour more likely to violate the budget constraint on edge cost. Conversely, if $\zeta$ is too small, satisfying the budget constraint on edge cost becomes more important than collecting a higher total prize, making it more likely to return a tour with few jobs (or even an empty tour). To find a $\zeta$ that effectively balance both vertex prizes and edge costs, we use a binary search method, ensuring that the identified tour satisfies the budget constraint on edge cost. 
Algorithm \ref{alg:BudgetConstraint} provides the algorithmic statement for solving the budgeted prize-collecting TSP. 

\begin{algorithm}[h!]
	\caption{Budgeted-PCTSP} \label{alg:BudgetConstraint}
	{\bf Input:} An undirected graph $G=(V, E)$, edge costs $c_{ij}\geq 0$ for $i,j\in V$ with $i\not=j$, initial vertex prize $\pi_i\geq 0$ for $i\in V$, and a root vertex $r$. An initial value of the scaling parameter $\zeta^{\rm init}>0$ and the common edge cost budget $D$ for each vehicle. Tolerance level $\delta>0$ and tolerated precision $\epsilon>0$.
	
	{\bf Output:} A tour $T=(V_T, E_T)$, where $V_T\subseteq V$ with $r\in V_T$ and $E_T\subseteq E$.
	\begin{algorithmic}[1]
        \Function{Budgeted-PCTSP}{$G, c, \pi, r, D, \zeta^{\rm init}, \delta, \epsilon$}
        \State $\zeta_l\leftarrow 0$
        \State $\zeta_r\leftarrow\zeta_{\rm init}$
		\State $T=(V_T, E_T)\leftarrow \Call{PCTSP}{G, c, \zeta_r\cdot \pi,r}$  
		\If{$|c(E_T)-D|\leq\delta$} 
        \State \Return $T=(V_T, E_T)$
        \EndIf
		\While{($|c(E_T) - D| > \delta$) and ($\zeta_r-\zeta_l >\epsilon$)}
		\State $\zeta_m \leftarrow (\zeta_l+\zeta_r)/2$ 
		\State $T=(V_T, E_T)\leftarrow \Call{PCTSP}{G, c, \zeta_m\cdot\pi, r}$
		\If{$c(E_T) < D-\delta$} 
		\State $\zeta_r \leftarrow \zeta_m$
		\Else 
		\State $\zeta_l \leftarrow \zeta_m$
		\EndIf
		\EndWhile
        \If{$|c(T)-D|\leq\delta$} 
        \State \Return $T=(V_T,E_T)$
        \Else 
        \State $T_l=(V_{T_l}, E_{T_l}) \leftarrow \Call{PCTSP}{G, c, \zeta_l\cdot \pi, r}$
        \State $T_r=(V_{T_r}, E_{T_r}) \leftarrow \Call{PCTSP}{G, c, \zeta_r\cdot \pi, r}$
        \State $c^{T_r}\leftarrow \{\}$
        \For{$i\leftarrow 1,\ldots,|V_{T_r}|-1$}
            \State $c^{T_r}\leftarrow c^{T_r}\cup \{c_{v_i, v_{i+1}}\}$
        \EndFor
        \State $T_r'=(V_{T_r'}, E_{T_r'}) \leftarrow \Call{PruneTour}{T_r, c^{T_r}, D, \delta, r}$
        \If{$\pi(V_{T_l}) \geq \pi(V_{T_r'})$}
        \State \Return $T_l$
        \Else 
        \State \Return $T_r'$
        \EndIf
        \EndIf
        \EndFunction
	\end{algorithmic}
\end{algorithm}

Note that through the binary search over $\zeta$, it is possible that one of the resulting tours (i.e., the resulting tour corresponds to $\zeta_r$) significantly exceeds the edge cost budget, even if $\zeta_l$ and $\zeta_r$ are extremely close (i.e., $\zeta_r-\zeta_l\leq \epsilon$). To address this, we use a subroutine \Subroutine{PruneTour} to extract a subtour when such a situation occurs; see Algorithm \ref{alg:PruneTour}. Since the tour that violates the cost budget constraint is expected to include vertices with high prizes, \Subroutine{PruneTour} simply selects vertices along the tour, starting from the root vertex, and continues until the edge cost budget is exhausted.

\begin{algorithm}[h!]
	\caption{PruneTour} \label{alg:PruneTour}
	{\bf Input:} A tour $T=(V_T, E_T)$, edge costs $c_{ij}\geq 0$ for $i, j\in V_T$ with $i\not=j$, the edge cost budget $D$, and a root vertex $r$. Tolerance level $\delta>0$.
 
	{\bf Output:} A pruned tour $T'=(V_{T'}, E_{T'})$, where $V_{T'}\subseteq V_T$ with $r\in V_{T'}$ and $E_{T'}\subseteq E_T$.
	\begin{algorithmic}[1]
        \Function{PruneTour}{$T, c, D, \delta, r$}
        \State Let $L=\{v_1,v_2,\ldots, v_{|V_T|}, v_1\}$ be the tour of $T$, where $v_1$ is the root vertex $r$.
        \If{$c(V_T)\leq D+\delta$}
        \State \Return $T=(V_T, E_T)$
        \EndIf
        \State $T'\leftarrow \{v_1\}$
        \For{$i\leftarrow 2,\ldots,|V_T|$}
        \State $T'\leftarrow T'\cup \{v_i\}$
        \State $b\leftarrow \sum_{i=1}^{|T'|-1} c_{v_i,v_{i+1}}$
        \If{$b>D+\delta$} 
        \State $V_{T'}\leftarrow \left\{v_1,\ldots,v_{i-1}\right\}$
        \State $E_{T'}\leftarrow \cup_{i=1}^{|T'|-1} \{(v_i, v_{i+1})\}$
        \State \Return $T'=(V_{T'}, E_{T'})$
        \EndIf
        \EndFor
        \EndFunction
	\end{algorithmic}
\end{algorithm}

\paragraph{Incorporating multiple vehicles.}

To incorporate multiple vehicles, we repeatedly apply Algorithm \ref{alg:BudgetConstraint} for each vehicle with respect to the remaining orders, as the number of daily pending orders can be substantial.

Algorithm \ref{alg:BPCVRP} presents our proposed heuristic method for solving the budgeted prize-collecting VRP when facing a set of pending orders, denoted as ${\cal P}$. Each pending order in ${\cal P}$ includes information of the order, such as location (in latitude and longitude), pending duration, deadline, etc., which is used to determine the prize assigned based on the chosen policy; see detailed discussion in Appendix \ref{subapp:PolicyDescription}. 
Note that Algorithm \ref{alg:BPCVRP} requires two additional algorithms: \textsc{CreateGraph}, which generates a complete graph as input for \textsc{Budgeted-PCTSP}, and \textsc{AssignPrize}, which assigns prizes to the pending orders according to the chosen policy. Detailed discussion of \textsc{CreateGraph} and \textsc{AssignPrize} are provided in Appendices \ref{subsubapp:InputGraphConstruction} and \ref{subapp:AssignPrize}, respectively.  

\begin{algorithm}[h!]
	\caption{Solution to the Budgeted-PCVRP} \label{alg:BPCVRP}
	{\bf Input:} Set of pending orders ${\cal P}$. A root vertex $r$. An initial value of the scaling parameter $\zeta^{\rm init}>0$. 
    Number of vehicles $W$. Edge cost budget $D=\{D_w\mid w=1,\ldots, W\}$ for all vehicles. Tolerance level $\delta>0$ and tolerated precision $\epsilon>0$. The chosen policy and the required inputs (see Algorithm \ref{alg:AssignPrize}).
 
	{\bf Output:} A set of $W$ tours where all the tours include root vertex $r$.
	\begin{algorithmic}[1]
        \Function{Budgeted-PCVRP}{${\cal P}, r, W, D, \zeta^{\rm init}, \delta, \epsilon, {\rm policy}$}
        \For{$w\leftarrow 1,\ldots,W$} 
        \State $\pi\leftarrow\Call{AssignPrize}{{\rm policy}, {\cal P}, \text{Inputs for the chosen policy}}$
        \State $G,\pi^{\rm init},c \leftarrow \Call{CreateGraph}{{\cal P}, \pi}$
        \State $T_w=(V_w, E_w) \leftarrow \Call{Budgeted-PCTSP}{G, c, \pi^{\rm init}, r, D_w, \zeta^{\rm init}, \delta, \epsilon}$
        \State Remove orders whose location is within $V_w$ from ${\cal P}$
        \EndFor
        \State \Return $\{T_w=(V_w,E_w) \mid w=1,\ldots,W\}$
        \EndFunction
	\end{algorithmic}
\end{algorithm}

\subsection{Construction of the Input Undirected Graph}
\label{subsubapp:InputGraphConstruction}

A crucial step in Algorithm \ref{alg:BPCVRP} is the implementation of \textsc{Budgeted-PCTSP}, which requires an undirected graph along with the prizes for its vertices as part of the input.
To create the input graph $G=(V, E)$, where $V$ is the set of vertices and $E$ is the set edges connecting these vertices, we obtain the information for $V$ and $E$ from the daily pending orders. As discussed in Appendix \ref{alg:BudgetConstraint}, we let ${\cal P}$ be the set of pending orders, which includes essential location information (latitude and longitude) for each order.
To determine the cost of an edge $(i,j)$, where $i,j\in V$, we first compute the estimated travel time for the edge using the estimation in Section \ref{subsec:Data}. We then add the average service time, also estimated in Section \ref{subsec:Data}, since the total cost for traveling from vertex $i$ to vertex $j$ and completing enforcement at vertex $j$ constitutes the total cost of edge $(i,j)$. Furthermore, prior to creating the input graph $G$, the system manager generates a prize for each pending order $i\in {\cal P}$ based on the chosen policy. We denote these generated prizes as $\pi_{\rm pending}$ and assign them accordingly to the vertices in graph $G$. 
Algorithm \ref{alg:GraphCreation} shows the procedure for constructing the input graph $G=(V, E)$. 

\begin{algorithm}[h!]
	\caption{Input graph creation} \label{alg:GraphCreation}
	{\bf Input:} Set of pending orders ${\cal P}$. The prize vector $\pi$, where $\pi_i$ corresponds to the prize for order $i\in {\cal P}$. 

	{\bf Output:} Graph $G=(V,E)$, vertex prize $\pi_i$ for $i\in V$, and edge cost $c_{ij}$ for the edge in between of $i,j\in V$ and $i\not=j$.
	\begin{algorithmic}[1]
        \Function{GraphCreation}{${\cal P}, \pi$}
        \State $V\leftarrow \{\}$
        \State $E\leftarrow \{\}$
        \For{$i=({\rm lat}_i, {\rm lng}_i, \ell^{\rm pending}_i, t^{\rm cancel}_i, t^{\rm deadline}_i, {\rm class}_i)\in {\cal P}$}
        \State $V\leftarrow V\cup \{i\}$
        \For{$j=({\rm lat}_j, {\rm lng}_j, \ell^{\rm pending}_j, t^{\rm cancel}_j, t^{\rm deadline}_j, {\rm class}_j)\in {\cal P}$ where $i\not= j$}
        \State $E\leftarrow E\cup \{(i,j)\}$
        \State $d_{ij}\leftarrow$ Haversine distance between $({\rm lat}_i, {\rm lng}_i)$ and $({\rm lat}_j, {\rm lng}_j)$
        \State $c_{ij}\leftarrow \beta d_{ij}+t_{service}$, where $t_{service}$ is the average service time (see Section \ref{subsec:Data}) 
        \EndFor
        \EndFor
        \State \Return $G=(V,E), \pi_i$ for $i\in V$, and $c_{ij}$ for $i,j\in V$ and $i\not=j$
        \EndFunction
	\end{algorithmic}
\end{algorithm}

\subsection{Implementation Details on the CCSO Eviction Data}
\label{subsubapp:InputParametersChoices}

In this section, we discuss our choices and adjustments for some inputs to \textsc{Budgeted-PCTSP}. 

\paragraph{Adjustment for $\pi^{\rm init}$.}
To balance the priority values of the pending orders across all policies, we scale the initial prizes, $\pi^{\rm init}$, through a binary search of $\zeta$ to align them with the edge cost. Since the prizes determined by each policy are initially much smaller than the edge cost, we multiply them by 1,000,000. This scaling factor ensures that the prizes are significantly larger than the edge cost, preserving the relative differences between the prizes. Note that other scaling factors may also be considered, and the results are not expected to change.

\paragraph{Adjustment for edge cost $c$.} Based on the CCSO dataset, we observe that some eviction requests can be very close, such as within nearby buildings or even the same building. In these cases, the edge cost $c_{ij}$ would be extremely small. However, in practice, deputies are still expected to spend time traveling between two closely located orders. To account for this, we set a minimum edge cost of 5 minutes when creating the input graph, as described in Algorithm \ref{alg:GraphCreation}. 

\paragraph{Choice of $\zeta^{\rm init}$.} The value of $\zeta^{\rm init}$ depends on the order of magnitude difference between the assigned initial prize $\pi^{\rm init}$ and the edge cost $c$.
In our simulation study, we set $\zeta^{\rm init}=100$.

\paragraph{Choice of $\delta$.} The tolerate level $\delta$ serves as a buffer for the edge cost budget. Specifically, Algorithm \ref{alg:BPCVRP} aims to find routes with an overall edge cost within $[D-\delta, D+\delta]$ with high probability, where $D$ is the user-defined edge cost budget. The choice of $\delta$ depends on how much deviation from $D$ is acceptable to the system manager. In our simulation study, we assume the CCSO is indifferent with respect to 30 minutes in terms of their daily working hour and set $\delta$ as 30 minutes.

\paragraph{Choice of $\epsilon$.} The tolerated precision $\epsilon$ measures the maximum allowable difference between the scaling parameters $\zeta_l$ and $\zeta_r$ through the binary search (see Algorithm \ref{alg:BudgetConstraint}). A smaller $\epsilon$ requires a finer distinction between $\zeta_l$ and $\zeta_r$, leading to longer computational time. In our simulation study, we set $\epsilon=0.0001$.

\section{Additional Discussion on Approximating Function $H(z, v)$}
\label{app:TrainingFFunction_Appendix}

In this section, we provide additional discussion on how we approximate $H(z, v)$ (discussed in Section \ref{subsec:ApproximateAuxiliaryFunction}). Appendix \ref{subapp:ApproximateFunctionH} provides an overview and algorithm for approximating $H(z, v)$, followed by implementation details in Appendix \ref{subapp:ResultsApproximationH}. 

\subsection{Procedure to Approximate $H(z, v)$}
\label{subapp:ApproximateFunctionH}

We use a neural network model to approximate $H(z, v)$ because incorporating it in Algorithm \ref{alg:MultiDimControl} is easier. Completing this task involves two main steps: generating training data and training the neural network model.

Recall from Section \ref{subsec:ApproximateAuxiliaryFunction} that $H(z, w)=\max_{\mu\in {\cal A}(z)}\{\mu z\}$, where $z$ and $v$ are $2K$-dimensional vectors representing the number of daily pending eviction orders and the prize of serving one eviction order from each class, respectively.
Each training data point includes three elements: the $2K$-dimensional vectors $z$ and $v$, and the corresponding response $\max_{\mu \in {\cal A}(z)} \{\mu v\}$. This response is the maximum total prize collectable, or the solution to the budgeted prize-collecting VRP, where $z_k$ represents the number of pending orders for class $k$, and $v_k$ represents the prize of serving one order of class $k$.

To facilitate the discussion, we let $n_k^{\rm max}$ be the maximum number of pending orders from class $k$, as observed in the CCSO dataset, and let ${\cal O}_k$ be the set of historical order locations from the dataset. To generate one training data point, we proceed as follows:
\begin{enumerate}
    \item We determine the number of pending orders for each class $z_k$ by generating a uniform random variable over the interval $[0, 1.5 n_k^{\rm max}]$.\footnote{We enlarge $n_k^{\rm max}$ by a factor of 1.5 to ensure that the generated number of pending orders spans well with respect to all possible values that may later occur in the simulation.} 
    \item Next, we randomly select $z_k$ locations from the set ${\cal O}_k$. For each selected order, we assign a prize by generating a uniform random variable over the interval $[0, p]$ if the order has a deadline, or $[0, h_2/\gamma_k]$ if it does not. These ranges are based on the boundary conditions in the HJB equation (Equations (\ref{eqn:HJB_Boundary1})--(\ref{eqn:HJB_Boundary3})).
    \item Finally, solve the budgeted VRP to determine the maximum total prize collected $\max_{\mu\in {\cal A}(z)}\{\mu v\}$. 
\end{enumerate}

Algorithms \ref{alg:DataGenSingleObs} shows the algorithm for generating one training data point.

\begin{algorithm}[h!]
	\caption{Generate one training data point\protect\footnotemark} 
    \label{alg:DataGenSingleObs}
	{\bf Input:} Set of locations of the historical orders ${\cal O}$. 
    Penalty of missing deadline $p$. 
    Holding cost of the orders without a deadline $h_2$. 
    Cancellation rate of class $k$, $\gamma_k$, where $k=1,\ldots,2K$.
    Number of vehicles $W$. 
    Edge cost budget $D=\{D_w\mid w=1,\ldots, W\}$.
    An initial value of the scaling parameter $\zeta^{\rm init}>0$, a root vertex $r$, tolerance level $\delta>0$, and tolerated precision $\epsilon>0$. 
 
	{\bf Output:} Number of pending orders $z$, assigned prizes for all classes $v$, and total prize collected by all vehicles $p_{\rm total}$.
	\begin{algorithmic}[1]
        \Function{DataGen}{${\cal O}, p, h_2, \gamma, r, W, D, \zeta^{\rm init}, \delta, \epsilon$}
        \State $V_{\rm pending}\leftarrow \{\}$
        \State $\pi_{\rm pending}\leftarrow \{\}$
        \For{$k=1,\ldots,2K$}
        \If{$k\leq K$}
        \State $v_k\leftarrow {\rm Unif}(0, p)$
        \Else
        \State $v_k\leftarrow {\rm Unif}(0, h_2/\gamma_k)$
        \EndIf
        \State $z_k\leftarrow {\rm Unif}(0, 1.5n^k_{\max})$
        \For{$n=1,\ldots,z_k$}
        \State $i_n\leftarrow$ Randomly chosen location $({\rm lat}_n, {\rm lng}_n)$ from ${\cal O}_k$
        \State $V_{\rm pending}\leftarrow V_{\rm pending}\cup \{i_n\}$
        \State $\pi_{\rm pending}\leftarrow \pi_{\rm pending}\cup \{v_k\}$
        \EndFor
        \EndFor
        \State $G, \pi^{\rm init}, c \leftarrow \Call{GraphCreation}{V_{\rm pending}, \pi_{\rm pending}}$ 
        \For{$w=1,\ldots,W$}
        \State $T_w=(V_w, E_w)\leftarrow \Call{Budgeted-PCTSP}{G, c, \pi^{\rm init}, r, D, \zeta^{\rm init}, \delta, \epsilon}$
        \EndFor
        \State $p_{\rm total}\leftarrow \sum_{w=1}^W \pi(V_w)$
        \State \Return $z, v, p_{\rm total}$
        \EndFunction
	\end{algorithmic}
\end{algorithm}
\footnotetext{The \textsc{GraphCreation} function shown in Algorithm \ref{alg:GraphCreation} requires the input $V_{\rm pending}$ to be a set of pending orders with order information, including latitude, longitude, pending duration, time until cancellation, deadline and class index (see Section \ref{subsubapp:InputGraphConstruction}). For the purpose of generating training data points, we only need information on the orders' latitude and longitude. Therefore, we let $V_{\rm pending}$ only includes such information in Algorithm \ref{alg:DataGenSingleObs}.}

With a set of generated training data, we train a neural network model to approximate $H(z, v)$.  
Algorithm \ref{alg:HApproximation} shows the algorithm for approximating $H(\cdot)$ using a neural network model.  

\begin{algorithm}[h!]
	\caption{Approximating function $H(z, v)$} \label{alg:HApproximation}
	{\bf Input:} Number of training data $N$. 
    Set of locations of the historical orders ${\cal O}$.
    Penalty of missing deadline $p$.
    Holding cost of the orders without a deadline $h_2$.
    Cancellation rate $\gamma$.
    Number of vehicles $W$. 
    Edge cost budget $D=\{D_w\mid w=1,\ldots, W\}$. 
    An initial value of the scaling parameter $\zeta^{\rm init}>0$, a root vertex $r$, tolerance level $\delta>0$, and tolerated precision $\epsilon>0$. 
 
	{\bf Output:} Approximated function $\hat{H}(z, v)$.
	\begin{algorithmic}[1]
        \State $z_{\rm training}\leftarrow \{\}$
        \State $v_{\rm training}\leftarrow \{\}$
        \State $p_{\rm training}\leftarrow \{\}$
        \For{$n= 1, \ldots, N$}
        \State $z, v, p_{\rm total}\leftarrow\Call{DataGen}{{\cal O}, p, h_2, \gamma, r, W, D, \zeta^{\rm init}, \delta, \epsilon}$
        \State $z_{\rm training}\leftarrow z_{\rm training}\cup \{z\}$
        \State $v_{\rm training}\leftarrow v_{\rm training}\cup \{v\}$
        \State $p_{\rm training}\leftarrow p_{\rm training}\cup \{p_{\rm total}\}$
        \EndFor
        \State Train neural network on training set $\{z_{\rm training}, v_{\rm training}, p_{\rm training}\}$
        \State \Return Function $\hat{H}(z, v)$.
	\end{algorithmic}
\end{algorithm}

\subsection{Implementation Details on the Approximation of $H(z, v)$}
\label{subapp:ResultsApproximationH}

In this section, we discuss some implementation details on how we approximate $H(z, v)$.

\paragraph{Training data preprocessing.} 

In order to improve the training, we first preprocess the training data so that their magnitude are at a similar level. 
In our simulation study, we scale $z_{\rm training}$ and $p_{\rm training}$ by a factor of 1/500.

\paragraph{Neural network architecture.}

Table \ref{tab:NeuralNetworkConfigurationVRP} shows the neural network architecture that we use to approximate $H(z, v)$.
\begin{table}[h!]
	\centering
	\begin{tabular}{ c|c }
		\toprule
		Hyper-parameters & Value \\   \midrule
		Number of hidden layers & 1  \\  \hline
		Number of neurons per layer & 200  \\  \hline
        Loss function & Mean squared error \\ 
        \hline 
        Activation function & ReLU  \\   \hline
		Batch size & 128   \\   \hline 
		Number of iterations & 1000  \\  \hline
		Learning rate & 0.0001  \\
 		\bottomrule
	\end{tabular}
	\caption{Neural network configuration for approximating $H(z, v)$.}
	\label{tab:NeuralNetworkConfigurationVRP}
\end{table}

\section{Detailed Description of the Simulation for the Proposed Policy and Benchmarks}
\label{sec:BenchmarksAlg}

In this section, we provide a detailed description of how we simulate the proposed policy and benchmark policies in Appendix \ref{subapp:PolicyDescription}.  
Appendix \ref{subapp:AssignPrize} contains detailed algorithms used to assign prizes within the simulation.

\subsection{Description of Policy Simulation}
\label{subapp:PolicyDescription}

In this section, we discuss the generic framework used to simulate all policies.

Our simulation contains five major. Initially, the simulation model generates a set of initial pending orders. We then run the simulation for $D$ time units (days). Within each time unit, we first generate a set of newly arrived orders and add them to the set of pending orders. Next, we remove any orders that have been canceled from the set of pending orders. We then implement the budgeted prize-collecting VRP  to serve a subset of the pending orders and subsequently remove them from the pending orders set. Finally, we remove any pending orders that miss their deadlines.  
In the remainder of this section, we provide implementation details for each of the events in our simulation.

Recall from Appendix \ref{app:TrainingFFunction_Appendix} that we use ${\cal O}_k$ to denote the set of locations of orders from class $k$ in the CCSO dataset. To facilitate the discussion, we introduce additional notation whose values are directly obtained from the CCSO dataset. We let $L$ be the set of durations between the order's received date and the deadline (as shown in Figure \ref{fig:CommonDeadline})\footnote{In the counterfactual analysis regarding extended deadlines, we add an additional $\tau$ days to all elements within $L$, where $\tau\in \{10, 20, 30\}$.}. We also let $R$ be the set of $2K$-dimensional vectors corresponding to the number of daily arrivals across all classes. Additionally, let $n^{\rm mean}$ be the $2K$-dimensional vector, where $n_k^{\rm mean}$ represents the average number of daily pending orders of class $k$ for $k=1,\ldots,2K$.

Throughout the simulation, we let ${\cal P}$ be the set of pending orders, where each element $i\in {\cal P}$ contains information regarding the order's location (the latitude ${\rm lat}_i$ and the longitude ${\rm lng}_i$), the duration it has been pending ($\ell^{\rm pending}_i$), the duration until it will be canceled if not served nor miss its deadline  ($t^{\rm cancel}_i$), the deadline ($t^{\rm deadline}_i$), and the class index (${\rm class}_i$). Specifically, we let $i=({\rm lat}_i, {\rm lng}_i, \ell^{\rm pending}_i, t^{\rm cancel}_i, t^{\rm deadline}_i, {\rm class}_i)\in {\cal P}$ be an element of set ${\cal P}$. 
Furthermore, we use $i(x)$ as the associated value of $x$ for order $i$, where $x\in \{{\rm lat}, {\rm lng}, \ell^{\rm pending}, t^{\rm cancel}, t^{\rm deadline}, {\rm class}\}$. 
Algorithm \ref{alg:GenericPolicy} presents the generic algorithm for implementing three different policies. 
\begin{algorithm}[h!]
	\caption{Policy simulation} \label{alg:GenericPolicy}
	{\bf Input:} Chosen policy. 
    Simulation run-length $D$ (in days). 
    Set of locations of the historical orders ${\cal O}$. 
    Set of durations between orders' received date and the deadline $L$. 
    Set of $2K$-dimensional vectors of number of daily arrivals from all classes. 
    Average number of daily pending orders $n^{\rm mean}$. 
    Estimated cancellation rate $\gamma$.
    Number of vehicles $W$ and cost budget $D$. An initial value of the scaling parameter $\zeta^{\rm init}>0$. Tolerance level $\delta>0$ and tolerated precision $\epsilon>0$. 
 
	{\bf Output:} Estimated performance measures.
	\begin{algorithmic}[1]
        \State ${\cal P}\leftarrow\Call{InitGen}{{\cal O}, n^{\rm mean}, L, \gamma}$
        \For{$\tau\leftarrow 1, \ldots, D$}
        \State ${\cal P}\leftarrow\Call{ArrivalGen}{{\cal P}, {\cal O}, R, L, \gamma}$ as in Algorithm \ref{alg:ArrivalGen}
        \State ${\cal P}\leftarrow\Call{RemoveCanceled}{{\cal P}}$ as in Algorithm \ref{alg:RemoveCanceled}
        \State ${\cal P}\leftarrow \Call{Budgeted-PCVRP}{{\cal P}, r, W, D, \zeta^{\rm init}, \delta, \epsilon, {\rm policy}}$ as in Algorithm \ref{alg:BPCVRP}
        \State ${\cal P}\leftarrow\Call{RemoveMissingDeadline}{{\cal P}}$ as in Algorithm \ref{alg:RemoveMissingDeadline}
        \EndFor
        \State \Return ${\cal P}$
	\end{algorithmic}
\end{algorithm}

We now discuss each event within the simulation in detail.
\paragraph{Generate initial set of pending orders.}

This event creates the initial set ${\cal P}$. To do so, we utilize the location information of historical orders from the CCSO dataset. Specifically, 
for each class $k$, where $k=1,\ldots,2K$, we randomly select $n_k^{\rm mean}$ locations (including latitude and longitude) from the CCSO dataset that belong to the same class. For each order $i$, we also randomly generate its pending duration $\ell^{\rm pending}_i$ as a uniform random variable between 0 and 73 days (median duration of the deadlines; see Figure \ref{fig:CommonDeadline}). Building on the pending duration, the duration until the order will be canceled is generated as an exponential random variable with rate $\gamma$, added up to the pending duration. Finally, the associated deadline is randomly chosen from the set $R$. 
Algorithm \ref{alg:InitGen} shows the algorithm statement to generate the initial set of pending orders. 
\begin{algorithm}[h!]
	\caption{Initial state generation} \label{alg:InitGen}
	{\bf Input:} Set of locations ${\cal O}$. 
    Average number of daily pending orders $n^{\rm mean}$. 
    Set of durations between orders’ received date and the deadline $L$.
    Estimated cancellation rate $\gamma$.
 
	{\bf Output:} Set of pending orders ${\cal P}$. 
	\begin{algorithmic}[1]
        \Function{InitGen}{${\cal O}, n^{\rm mean}, L, \gamma$}
        \State ${\cal P}\leftarrow \{\}$
        \For{$k=1,\ldots,2K$}
            \For{$n=1,\ldots,n^{\rm mean}_k$}
                \State $({\rm lat}, {\rm lng})\leftarrow$ Randomly chosen from ${\cal O}_k$
                \State $\ell^{\rm pending}\leftarrow {\rm Unif}(0, 73)$
                \State $t^{\rm cancel}\leftarrow \ell^{\rm pending}+\exp(\gamma_k)$
                \If{$k\leq K$}
                \State $t^{\rm deadline}\leftarrow$ Randomly
                chosen value from $L$
                \Else
                \State $t^{\rm deadline}\leftarrow\inf$ 
                \EndIf
                \State $i\leftarrow ({\rm lat}, {\rm lng}, \ell^{\rm pending}, t^{\rm cancel}, t^{\rm deadline}, k)$
                \State ${\cal P}\leftarrow {\cal P}\cup \{i\}$
            \EndFor
        \EndFor
        \State \Return ${\cal P}$
        \EndFunction
	\end{algorithmic}
\end{algorithm}

\paragraph{Generate daily arrivals.}

Building on the generated initial pending orders ${\cal P}$, we add newly generated orders as daily arrivals. To do this, we first determine the number of daily arrivals for each class from the set $R$. Next, for each class, based on the number of daily arrivals, we select locations for the new orders and set their pending duration to 0. The duration before cancellation and deadlines are generated in the same manner as in \textsc{InitGen}. These newly generated orders are then added to ${\cal P}$.  
Algorithm \ref{alg:ArrivalGen} shows the algorithm for generating daily arrivals. 
\begin{algorithm}[h!]
	\caption{Daily arrival orders generation} \label{alg:ArrivalGen}
	{\bf Input:} Set of pending orders ${\cal P}$. Set of locations ${\cal O}$. 
    Set of number of daily arrivals $R$. 
    Set of durations between orders’ received date and the deadline $L$. 
 
	{\bf Output:} Updated set of pending orders ${\cal P}$.
	\begin{algorithmic}[1]
        \Function{ArrivalGen}{${\cal P}, {\cal O}, R, L$}
        \State $n^{\rm arrival}\leftarrow$ Randomly chosen from $R$
        \For{$k\leftarrow 1$ to $2K$}
            \For{$n\leftarrow 1$ to $n^{\rm arrival}_k$}
                \State $({\rm lat}, {\rm lng})\leftarrow$ Randomly chosen from ${\cal O}_k$
                \State $\ell^{\rm pending}\leftarrow 0$
                \State $t^{\rm cancel}\leftarrow \exp(\gamma_k)$
                \If{$k\leq K$}
                \State $t^{\rm deadline}\leftarrow$ Randomly chosen value from $L$
                \Else
                \State $t^{\rm deadline}\leftarrow\inf$ 
                \EndIf
                \State $i\leftarrow ({\rm lat}, {\rm lng}, \ell^{\rm pending}, t^{\rm cancel}, t^{\rm deadline}, k)$
                \State ${\cal P}\leftarrow {\cal P}\cup \{i\}$
            \EndFor
        \EndFor
        \State \Return ${\cal P}$
        \EndFunction
	\end{algorithmic}
\end{algorithm}

\paragraph{Remove orders that are canceled.}

To remove canceled orders, we compare each order's pending duration with its cancellation time. Specifically, we remove orders for which the pending duration exceeds the cancellation time. Algorithm \ref{alg:RemoveCanceled} shows the algorithm to remove canceled orders. 
\begin{algorithm}[h!]
	\caption{Order removal due to cancellation} \label{alg:RemoveCanceled}
	{\bf Input:} Set of pending orders ${\cal P}$. 
 
	{\bf Output:} Updated set of pending orders ${\cal P}$.
	\begin{algorithmic}[1]
        \Function{RemoveCanceled}{${\cal P}$}
        \For{$i=({\rm lat}_i, {\rm lng}_i, \ell^{\rm pending}_i, t^{\rm cancel}_i, t^{\rm deadline}_i, {\rm class}_i)$ in ${\cal P}$} 
            \If{$\ell_i^{\rm pending} > t_i^{\rm cancel}$}
            \State ${\cal P}\leftarrow {\cal P}\setminus \{i\}$
            \EndIf
        \EndFor
        \State \Return ${\cal P}$
        \EndFunction
	\end{algorithmic}
\end{algorithm}

\paragraph{Remove orders that miss their deadlines.}

To remove the orders that miss their deadlines, we compare each order's pending duration with its associated deadline. Specifically, if an order's pending duration exceeds its deadline, we remove it from the system. Additionally, if an order is still in the system (i.e., it has not missed its deadline), we increase its pending duration by one day. 
Algorithm \ref{alg:RemoveMissingDeadline} shows the algorithm to remove orders that miss their deadlines. 
\begin{algorithm}[h!]
	\caption{Order removal due to missing deadline} \label{alg:RemoveMissingDeadline}
	{\bf Input:} Set of pending orders ${\cal P}$. 
 
	{\bf Output:} Updated set of pending orders ${\cal P}$.
	\begin{algorithmic}[1]
        \Function{RemoveMissingDeadlien}{${\cal P}$}
        \For{$i=({\rm lat}_i, {\rm lng}_i, \ell^{\rm pending}_i, t^{\rm cancel}_i, t^{\rm deadline}_i, {\rm class}_i)$ in ${\cal P}$} 
            \If{$\ell_i^{\rm pending} > t_i^{\rm deadline}$}
            \State ${\cal P}\leftarrow {\cal P}\setminus \{i\}$
            \Else 
            \State $\ell_i^{\rm pending}\leftarrow \ell_i^{\rm pending}+1$
            \EndIf
        \EndFor
        \State \Return ${\cal P}$
        \EndFunction
	\end{algorithmic}
\end{algorithm}

\subsection{Assigning Prizes for Different Policies}
\label{subapp:AssignPrize}

In this section, we provide a more detailed discussion on how prizes are assigned to the daily pending orders based on different policies. For completeness, we also briefly discuss how prizes are set for each policy, as discussed in Section \ref{sec:BenchmarkPolicies}.
Algorithm \ref{alg:AssignPrize} presents the generic framework for assigning prizes to each pending order, depending on the chosen policy. We then discuss the detailed approach and the specific algorithm for each policy separately. 
\begin{algorithm}[h!]
	\caption{Assign prizes for each vehicle} \label{alg:AssignPrize}
	{\bf Input:} Chosen policy. Set of pending orders ${\cal P}$. \\
    Inputs for the Proposed policy: trained $2K$-dimensional gradient function $G^{w_2}$.  \\
    Inputs for the urgency-based policy: artificial common deadline for orders without a deadline $\tilde{d}$.  \\
    Inputs for the threshold-based policy: vehicle index $v$, suburb threshold $\xi$, probability to include orders without a deadline for service $w$.  
 
	{\bf Output:} Assigned prize $\pi_i$ for $i\in {\cal P}$.
    \begin{algorithmic}[1]
        \Function{AssignPrize}{${\rm policy},{\cal P}, \text{Inputs for the chosen policy}$}
        \State $\pi\leftarrow$ Algorithm \ref{alg:AssignPrize}a, \ref{alg:AssignPrize}b, or \ref{alg:AssignPrize}c based on the chosen policy
        \State \Return $\pi$
        \EndFunction
    \end{algorithmic}
\end{algorithm}

\paragraph{Proposed policy.} To determine the prize for each pending order, we first sort the orders within each class ${\cal I}^{(k)}$, where $k=1,\ldots,2K$. Orders with a deadline are sorted in ascending order by the remaining days until their deadlines, while orders without a deadline are sorted in descending order based on how long they have been waiting. The proposed policy requires a trained gradient function ${G}^{w_2}(\cdot)$, as described in Section \ref{sec:ComputationalMethod}, to assign prizes. To construct the input for ${G}^{w_2}$, which requires a 2K-dimensional vector, we first set $\tilde{z}=z_k$, the number of pending orders in class $k$, for each $k=1,\ldots,2K$. For the $i$th order in the sorted list of class $k$, we update $\tilde{z}_k$ to $z_k-i+1$, where $i\in {\cal I}^{(k)}$. The prize for the $i$th order in class $k$ is then determined by the $k$th dimension of $G^{w_2}(\tilde{z})$, i.e., $G_k^{w_2}(\tilde{z})$.
Algorithm \ref{alg:AssignPrize_Proposed} provides the algorithm statement for assigning prizes under the proposed policy. 
\begin{algorithm}[h!]
    \renewcommand{\thealgorithm}{\ref{alg:AssignPrize}a}
	\caption{Assign prizes for one vehicle based on the proposed policy} \label{alg:AssignPrize_Proposed}
    \begin{algorithmic}[1]
        \For{$k=1,\ldots,2K$}
        \State $I_k\leftarrow \{i\in {\cal P}\mid i(\text{class})=k\}$
        \State $z_k\leftarrow |I_k|$
        \If{$k\leq K$}
        \State $I_k\leftarrow$ Sorted $I_k$ in ascending order of $i(t^{\rm deadline})-i(\ell^{\rm pending})$ for $i\in I_k$
        \Else
        \State $I_k\leftarrow$ Sorted $I_k$ in descending order of $i(\ell^{\rm pending})$ for $i\in I_k$
        \EndIf
        \EndFor
        \For{$k=1,\ldots,2K$}
        \State ${\rm index}\leftarrow 0$
        \For{$i\in I_k$}
        \State $\tilde{z}\leftarrow 2K$-vector of $z_k$ for $k=1,\ldots, 2K$
        \State $\tilde{z}_k\leftarrow z_k-{\rm index}+1$
        \State $\pi_i\leftarrow G_k^{w_2}(\tilde{z})$
        \State ${\rm index}\leftarrow {\rm index}+1$
        \EndFor
        \EndFor
    \end{algorithmic}
\end{algorithm}

\paragraph{Urgency-based policy.} 
The urgency-based policy categorizes pending orders into three sets: ${\cal I}_1, {\cal I}_2$, and ${\cal I}_3$ \footnote{Although this classification differs from what discussed in Section \ref{sec:BenchmarkPolicies}, it is used here for computational purposes and is equivalent to that in Section \ref{sec:BenchmarkPolicies}}. For orders in ${\cal I}_3$, all prizes are set to 1, which are the highest prizes among all pending orders. For orders in ${\cal I}_1\cup {\cal I}_2$, we sort them in ascending order based on the remaining days until their deadlines if they belong to ${\cal I}_1$, or the remaining days until $\tilde{d}$ if they belong to ${\cal I}_2$. The prize for the $i$th order in ${\cal I}_1\cup {\cal I}_2$ is assigned as $\pi_i=1/2^i$, where $i\in {\cal I}_1\cup {\cal I}_2$.
Algorithm \ref{alg:AssignPrize_BudgetedFCFS} provides the algorithm for assigning prizes under the urgency-based policy. 

\begin{algorithm}[h!]
    \renewcommand{\thealgorithm}{\ref{alg:AssignPrize}b}
	\caption{Assign prizes for one vehicle based on the urgency-based policy} \label{alg:AssignPrize_BudgetedFCFS}
    \begin{algorithmic}[1]
        \State $I_1\leftarrow \{\}$
        \State $I_2\leftarrow \{\}$
        \State $I_2\leftarrow \{\}$
        \For{$i=({\rm lat}_i, {\rm lng}_i, \ell^{\rm pending}_i, t^{\rm cancel}_i, t^{\rm deadline}_i, {\rm class}_i)$ in ${\cal P}$}
        \If{$i(t^{\rm deadline})\not=\inf$}
        \State $I_1\leftarrow I_1\cup \{i\}$
        \Else 
        \If{$i(\ell^{\rm pending})\leq \tilde{d}$}
        \State $I_2\leftarrow I_2\cup \{i\}$
        \Else 
        \State $I_3\leftarrow I_3\cup \{i\}$
        \EndIf
        \EndIf
        \EndFor
        \State $I'\leftarrow$ Sorted $I_1\cup I_2$ in ascending order of $i(t^{\rm deadline})-i(\ell^{\rm pending})$ if $i\in I_1$ and $\tilde{d}-i(\ell^{\rm pending})$ if $i\in I_2$
        \State ${\rm index}\leftarrow 0$
        \For{$i\in I'\cup I_3$}
        \If{$i\in I'$}
        \State $\pi_i\leftarrow 1/(2^{{\rm index}+1})$
        \State ${\rm index}\leftarrow {\rm index}+1$
        \Else
        \State $\pi_i\leftarrow 1$
        \EndIf
        \EndFor
    \end{algorithmic}
\end{algorithm}

\paragraph{Threshold-based policy.}

To assign prizes for pending orders under the threshold-based policy, we first determine the dispatch zone for each vehicle, then set prizes for the orders within that zone. 
Recall that $z_k$ represents the number of pending orders in class $k$, ${\cal G}_s$ and $G_d$ are the sets of suburb and city zones, respectively, and $\xi$ is the calibrated threshold used to decide if a vehicle should be dispatched to a suburb zone. If there exists a suburb zone $r\in {\cal G}_s$ such that $z_r+z_{r+K}\geq \xi$, the dispatch zone $r^\ast$ for the first vehicle is chosen from ${\cal G}_s$, with each zone $r\in {\cal G}_s$ selected with probability $(z_r+z_{r+K})/\sum_{r'\in G_s} (z_{r'}+z_{r'+K})$. For remaining vehicles, the dispatch zone $r^\ast$ is chosen from ${\cal G}_d$, with each zone $r\in {\cal G}_d$ selected based on the probability $(z_r+z_{r+K})/\sum_{r'\in {\cal G}_d} (z_{r'}+z_{r'+K})$. If no suburb zone meets the threshold $\xi$ (i.e., $z_r<\xi$ for all $r\in {\cal G}_s$), then $r^\ast$ for all vehicles is chosen among city zones ${\cal G}_d$, with selection probabilities calculated similarly.

Once the dispatch zone $r^\ast$ for a vehicle is determined, prizes for the orders in $r^\ast$ are assigned based on a Bernoulli random variable, with the probability dependent on whether the order has a deadline. 
Algorithm \ref{alg:AssignPrize_ThresholdBased} provides the algorithm for assigning prizes under the threshold-based policy. 

\begin{algorithm}[h!]
    \renewcommand{\thealgorithm}{\ref{alg:AssignPrize}c}
	\caption{Assign prizes for one vehicle based on the threshold-based policy} \label{alg:AssignPrize_ThresholdBased}
    \begin{algorithmic}[1]
        \State $G_s\leftarrow \{2, 3, 8, 9, 11, 12\}$
        \For{$k=1,\ldots,2K$}
        \State $z_k\leftarrow |\{i\in {\cal P}\mid i({\rm class})=k\}|$
        \EndFor
        \If{there exists $r\in {\cal G}_s$ such that $(z_r+z_{r+K})\geq \xi$ and $v=1$} 
        \State $r^\ast\leftarrow$ Randomly select $r\in G_s$ with probability $(z_r+z_{r+K})/\sum_{r'\in {\cal G}_s} (z_{r'}+z_{r'+K})$
        \Else 
        \State $r^\ast\leftarrow$ Randomly select $r\in G_d$ with probability $(z_r+z_{r+K})/\sum_{r'\in {\cal G}_d} (z_{r'}+z_{r'+K})$
        \EndIf
        \For{$i=({\rm lat}_i, {\rm lng}_i, \ell^{\rm pending}_i, t^{\rm cancel}_i, t^{\rm deadline}_i, {\rm class}_i)\in {\cal P}$}
        \If{$i({\rm class})=r^\ast$}
        \State $\pi_i\leftarrow {\rm Ber}(i(\ell^{\rm pending})/i(t^{\rm deadline}))$
        \ElsIf{$i({\rm class})=r^\ast+K$}
        \State $\pi_i\leftarrow {\rm Ber}(w)$ 
        \Else
        \State $\pi_i\leftarrow 0$
        \EndIf
        \EndFor
    \end{algorithmic}
\end{algorithm}

\section{Additional Information for Data Description and Model Parameters Estimation}
\label{app:ParameterEstimation}

In this section, we first provide a comprehensive description of the CCSO dataset and discuss how we estimate the performance metrics in Appendix \ref{subapp:DetailedDataDescription}. Appendices \ref{subapp:DemandEstimation}--\ref{subapp:MeanMaxNumPending} provide detailed discussion on the estimation of various model parameters from the CCSO dataset for our simulation study. 

\subsection{Detailed Data Description and Estimated Performance Measures}
\label{subapp:DetailedDataDescription}

In this section, we first provide a detailed discussion of the CCSO dataset and then present the estimated performance measures derived from the dataset.  

The CCSO dataset contains 195,428 entries (rows of the data), which corresponds to 73,682 unique eviction orders collected between January 1, 2015 and September 30, 2019. 
Table \ref{tab:DataVariable} shows the variables from the CCSO dataset that are utilized in our analysis, along with their respective meanings.
\begin{table}[h!]
	\centering
	\resizebox{\textwidth}{!}{
		\begin{tabular}{ p{0.2\textwidth} | p{0.8\textwidth} }
			\toprule
			Variable & Description \\  \midrule 
            Case number & Case number associated with each eviction order. \\
            Sheriff number & Sheriff number associated with each action taken on the eviction order. \\
            Received date & Date when the eviction order was received.  \\
            Service date & Date when the eviction order was served (if applicable).  \\
            Canceled date & Date when the eviction order is canceled (if applicable). \\
            Die date & Associated deadline of the eviction order (if applicable).   \\
            Case disposition & Outcome of the eviction order. \\
            Deputy information & Identifier of the deputy who served the eviction order. \\
            Def address & Address where the eviction order was enforced.  \\
            Def zip code & Zip code of the eviction order, which also determines the district and zone.  \\
            Evict start time & Time when the eviction enforcement began.  \\
            Evict end time & Time when the eviction enforcement concluded.  \\
			\bottomrule
		\end{tabular}
	}
	\caption{Variables and their descriptions from the CCSO dataset used in the analysis.}
	\label{tab:DataVariable}
\end{table}

Within the dataset, there are 63,995 orders with valid records for disposition (to determine whether an order was canceled) and case termination date (used to identify if an order was served, canceled, or missed the deadline). Table \ref{tab:CaseOutcomePercentage} presents the number and percentage of eviction orders that were served, canceled, or missed their initial 120-day deadline, according to CCSO dataset.
\begin{table}[h!]
	\centering
	\begin{tabular}{ c||c|c|c }
		\toprule
		Type & Served & Canceled & Missed deadline \\ \midrule
		With a deadline & 33,296 & 11,883 & 7,724 \\
        Without a deadline & 6,399 & 4,693 & 0  \\  \midrule
        Total & 39,695 (62.03\%) & 16,576 (25.90\%) & 7,724 (12.07\%\footnotemark)  \\
		\bottomrule
	\end{tabular}
	\caption{Number (percentage) of the eviction orders that were served, canceled, or missed their deadlines, according to the CCSO dataset.}
	\label{tab:CaseOutcomePercentage}
\end{table}
\footnotetext{Note that 12.07\% does not correspond to the percentage of the orders that missed their deadline.}

We then discuss how we estimate the performance metrics (described in Table \ref{tab:PerformanceMeasure}) using the CCSO dataset. 
\begin{itemize}
    \item {\bf Missing deadline percentage.} To estimate the percentage of orders that were not served before their deadline passed, we specifically focus on orders that had a deadline and were not canceled before the deadline. The missing deadline percentage is calculated as $7,724/(7,724+33,296)=18.83\%$.
    \item {\bf Cancellation percentage.} 
    Among the 53,160 orders with a deadline, 11,883 were canceled. Among the 11,092 orders without a deadline, 4,693 orders were canceled.
    Therefore, cancellation percentage for orders with and without a deadline is $11,883/53,160=22.35\%$ and $4,693/11,092=42.31\%$, respectively. 
    \item {\bf Average number of orders waiting to be enforced.} To estimate the number of daily pending orders, we check the number of orders that were not served, not canceled, and have not yet missed their deadline for each day. Since our analysis focuses on 63,995 (a subset of the CCSO dataset after excluding problematic orders), we scale up the estimated daily pending orders proportionally to reflect the full dataset. Additionally, we exclude the first and last 50 days of data from our analysis to minimize the impact of initial and final variance in the system. Table \ref{tab:NumDailyPendingStats} presents the statistics for the number of daily pending orders from the CCSO dataset. 
    \begin{table}[h!]
	\centering
	\begin{tabular}{ c||c|c|c|c|c }
		\toprule
		Type & Mean & Median & Standard deviation & Minimum & Maximum \\ \midrule
		With a deadline & 2,326.46 & 2,345.92 & 252.91 & 1,859.10 & 2,878.11 \\
        Without a deadline & 1057.62 & 990.48 & 325.46 & 532.84 & 1,786.50   \\  
		\bottomrule
	\end{tabular}
	\caption{Statistics for the estimation of the average number of orders waiting to be enforced.}
	\label{tab:NumDailyPendingStats}
\end{table}
    
    \item {\bf Number of orders served daily.} To estimate the number of orders served daily, we calculate the number of orders were served on each day. As with estimating the daily pending orders, we also scale up the results proportionally and exclude the first and last 50 days from our analysis. Table \ref{tab:NumDailyServedStats} shows the statistics of the number of daily served orders from the CCSO dataset. 
    \begin{table}[h!]
	\centering
	\begin{tabular}{ c||c|c|c|c|c }
		\toprule
		Type & Mean & Median & Standard deviation & Minimum & Maximum \\ \midrule
		With a deadline & 34.47 & 35.00 & 17.13 & 0 & 125.76 \\
        Without a deadline & 7.05 & 6.48 & 5.64 & 0 & 36.30  \\  
		\bottomrule
	\end{tabular}
	\caption{Statistics for the number of daily served eviction orders.}
	\label{tab:NumDailyServedStats}
    \end{table}
\end{itemize}

\subsection{Order Arrival Process Estimation}
\label{subapp:DemandEstimation}

In this section, we discuss how we estimate the key parameters of the eviction order arrival process. 

As discussed in Section \ref{sec:BackgroundAndModel}, we assume that $A_k(t)$, the cumulative number of arrivals for class $k$, follows a Gaussian distribution with mean $\lambda_k t$ and variance $\sigma_k^2 t$ for $t\geq 0$ and $k=1,\ldots,2K$. Additionally, $A_1,\ldots,A_{2K}$ are assumed to be mutually independent Gaussian processes. We estimate $\lambda_k$ and $\sigma_k^2$ using a Maximum Likelihood Estimator (MLE).  
To perform the estimation, we consider the eviction orders received over 1080 days, from 2015/1/2 to 2019/9/30\footnote{Although the dataset starts from 2015/1/1, no new eviction orders were received by the CCSO on this date. Consequently, we exclude 2015/1/1 from our estimation. We also remove weekends and national holidays, as the CCSO does not process new eviction orders on these days.}. We also exclude data of the first and last 50 days of the observation period to minimize the variance within the initial and final periods of the observation (similar as in Section \ref{subapp:DetailedDataDescription}). We use the received date variable to record the number of eviction orders arrived to the system. 

Let $D_{1kd}$ and $D_{0kd}$ be the number of eviction orders with and without a deadline, respectively, received from zone $r$ on day $d$,  where $r=1,\ldots, 12$ and $d=1,\ldots, 1080$. 
The mean $\lambda_k$ and variance $\sigma_k^2$ for class $k$ are estimated as $\hat{\lambda}_k$ and $\hat{\sigma}_k^2$ using the MLE as follows:
\begin{align*}
    \hat{\lambda}_k &= \frac{1}{1080}\sum_{d=1}^{1080} D_{1kd} \text{ and } \hat{\sigma}_k^2=\frac{1}{1080}\sum_{d=1}^{1080}\left( D_{1kd}-\hat{\lambda}_k\right)^2 \text{ for $k=1,\ldots,K$};   \\
    \hat{\lambda}_k &= \frac{1}{1080}\sum_{d=1}^{1080} D_{0kd} \text{ and } \hat{\sigma}_k^2=\frac{1}{1080}\sum_{d=1}^{1080}\left( D_{0kd}-\hat{\lambda}_k\right)^2 \text{ for $k=K+1,\ldots,2K$}.
\end{align*}

Table \ref{tab:DailyDemandByNodes_12Nodes} provides the MLE estimates $\hat{\lambda}_k$ and $\hat{\sigma}_k$ for $k=1,\ldots,2K$, based on the history of the 73,682 received eviction orders. 
\begin{table}[h!]
	\centering
	\begin{tabular}{ c|cc|| c|cc}
		\toprule
		$k$ & $\hat{\lambda}_k$ & $\hat{\sigma}_k$ & $k$ & $\hat{\lambda}_k$ & $\hat{\sigma}_k$   \\   \midrule
		1 & 6.7519 & 4.1093 & 13 & 2.0611 & 2.0037  \\
		2 & 1.4315 & 1.3868 & 14 & 0.8380 & 1.1453  \\
		3 & 1.6185 & 1.6213	& 15 & 0.5241 & 0.8127   \\
		4 & 13.5593 & 7.2243 & 16 & 1.2565 & 1.4194  \\
		5 & 6.2880 & 4.0101 & 17 & 1.0981 & 1.3511  \\
		6 & 5.3102 & 3.4792	& 18 & 0.6556 & 1.0540  \\
		7 & 6.6713 & 4.5532 & 19 & 0.5306 & 0.9940  \\
		8 & 2.4769 & 2.2557 & 20 & 0.6306 & 0.9553  \\
		9 & 1.4537 & 1.4611 & 21 & 0.5259 & 0.8742  \\
		10 & 9.6074 & 5.1836 & 22 & 1.5593 & 1.6511  \\
		11 & 1.2028 & 1.2905 & 23 & 0.5019 & 0.8311   \\
		12 & 2.3657 & 1.9547 & 24 & 1.3407 & 1.6551  \\
		\bottomrule
	\end{tabular}
	\caption{The MLE estimates $\hat{\lambda}_k$ and $\hat{\sigma}_k^2$, where $k=1,\ldots,2K$, for the mean and variance of the demand arrival process.}
	\label{tab:DailyDemandByNodes_12Nodes}
\end{table}

\subsection{Cancellation Rate Estimation}
\label{subapp:CancellationRateEstimation}

In this section, we discuss our approach to estimating the cancellation rate of eviction orders. As discussed in Section \ref{sec:BackgroundAndModel}, an eviction order may be canceled (and thus removed from the system) while it is waiting to be served. To estimate the cancellation rate, we apply the jackknife estimate of a Kaplan-Meier integral due to \citet{StuteWang1994}. 

We first present a general discussion of our approach. 
Considering $N$ total data points, we let
\begin{align*}
    c_i &= \text{duration (in days) that order $i$ remains in the system, where $i=1,\ldots, N$};   \\
    \delta_i &= \begin{cases}
        1, & \text{if order $i$ is canceled (i.e., the data is observed)}, \\
        0, & \text{otherwise (i.e., the data is censored)},
    \end{cases} \quad \text{ where $i=1,\ldots, N$}.
\end{align*}
Orders canceled after their deadline are treated as missing the deadline rather than as canceled. Let $t^{\rm cancel}$ be the average time (in days) until an eviction order is canceled, estimated by the Kaplan-Meier integral:
\begin{align*}
	\hat{t}^{\rm cancel} &=  \sum_{i=1}^n W_i c_{(i)},
\end{align*} 
where $W_i$ is defined for $1\leq i\leq N$ as
\begin{align*}
	W_i &= \frac{\delta_{[i]}}{N-i+1} \prod_{k=1}^{i-1} \left(\frac{N-k}{N-k+1}\right)^{\delta_{[k]}},
\end{align*} 
with $c_{(i)}$ as the $i$th order statistics of $c_i$ and $\delta_{[i]}$ is the concomitant of the $i$th order statistics, i.e., $\delta_{[i]}=\delta_k$ if $c_{(i)}=c_k$, where $1\leq i, k\leq N$.
The bias-corrected jackknife estimate of the Kaplan-Meier integral is given by \begin{align*}
	{\rm BIAS} &= -\frac{N-1}{N} c_{(N)} \delta_{[N]} \left(1-\delta_{[N-1]}\right) \prod_{k=1}^{N-2} \left(\frac{N-1-k}{N-k}\right)^{\delta_{[k]}}. 
\end{align*}
Finally, the bias-corrected jackknife estimator of the Kaplan-Meier integral is $\tilde{t}^{\rm cancel} = \hat{t}^{\rm cancel}-{\rm BIAS}$. 

In our problem, the data of interest (i.e., the duration before an order is canceled) are not always observable. Specifically, when an eviction order is terminated and removed from the system due to a reason other than being canceled (such as missing the deadline or being served), the order is prevented from being canceled and will not be observed as a canceled order. 

Our cancellation estimation is based on 64,252 unique eviction order, including 53,160 orders with a deadline and 11,092 without. We further exclude outliers by removing the top 5\% and bottom 5\% of both censored and uncensored data points.
Table \ref{tab:NumCasesUseForCancellationEstimation_Aggregated} shows the number of censored and uncensored orders that we use for the cancellation estimation as well as the estimated cancellation rate (in days). 
\begin{table}[h!]
	\centering
	\begin{tabular}{ c|c|c }
		\toprule
		Number of & Number of & Estimated cancellation \\ 
		censored orders & uncensored orders & mean time (rate) (in days)  \\ \midrule
		45,324 & 12,975 & 122.56 (0.008) \\
		\bottomrule
	\end{tabular}
	\caption{Estimated cancellation rate of eviction orders based on CCSO dataset.}
	\label{tab:NumCasesUseForCancellationEstimation_Aggregated}
\end{table}

\subsection{Estimation of Service Time}
\label{subapp:ServiceTimeEstimation}

To conduct the service time estimation, we focus on eviction orders with valid records of both their start and end times. 
Our estimation is based on 34,843 unique orders (i.e., Figure \ref{fig:ServiceTimeEstimation} is created based on 34,843 unique orders). 


\subsection{Estimation of Number of Teams on Daily Assignments}
\label{subapp:NumTeamEstimation}

In the CCSO dataset, if a team of deputies served an eviction order, this order is recorded by each deputy in the team, resulting in multiple entries for the same order. In practice, city-dispatched teams typically serve eviction orders together, whereas suburb-dispatched teams may split, with some deputies serving orders individually or in smaller groups. To estimate the number of teams dispatched for daily assignments, we focus on 30,538 eviction orders with both recorded service times and serving deputy information. 
We identify distinct teams based on these rules:
\begin{enumerate}
	\item {\bf Subset of orders}: If a deputy served a subset of the orders served by other deputies, they are considered part of the same team, with the team’s served orders being the union of orders served by each member.

	\item {\bf Intersecting orders}: If a deputy’s set of orders intersects with the orders served by others, those deputies are also considered one team, with the total served orders taken as the union of orders served by each member.
	This happens when one team served one order together and some member served other orders separately.  
	\item {\bf Disjoint orders}: Deputies serving non-overlapping sets of orders are treated as separate teams since we have no evidence linking them to a single team.

	\end{enumerate}
This aggregation yields 4,527 teams. By analyzing their recorded service dates, we calculated the daily number of dispatched teams, summarized in  Table \ref{tab:NumDailyTeamStats}. The corresponding bar plot is also included in Figure \ref{fig:NumDailyTeams}. Based on these statistics, we estimate an average of four teams assigned daily.
\begin{table}[h!]
	\centering
	\begin{tabular}{ c|c|c|c|c }
		\toprule
		Mean & Median & Standard deviation & Minimum & Maximum \\ \midrule
		3.99 & 4 & 1.65 & 0 & 10 \\
		\bottomrule
	\end{tabular}
	\caption{Statistics of the number of teams on daily assignments.}
	\label{tab:NumDailyTeamStats}
\end{table}

\subsection{Estimation of Daily Working Hours}
\label{subapp:DailyWorkingHoursEstimation}

In this section, we discuss how we estimate the daily working hours for each eviction team, based on the 4,527 teams identified in Section \ref{subapp:NumTeamEstimation}. 

We first obtain each team’s daily working duration as the time difference between the earliest eviction start time and the latest eviction end time for each team. Table \ref{tab:TeamHourStats} presents the statistics of these working hours, including the total team-hour, which represents the sum of the working hours across all identified teams.
\begin{table}[h!]
	\centering
	\begin{tabular}{ c|c|c|c|c|c }
		\toprule
		Mean & Median & Standard deviation & Minimum & Maximum & Total team-hour \\ \midrule
		3.14 & 3.33 & 1.31 & 0.02 & 7.8 & 14224.08 \\
		\bottomrule
	\end{tabular}
	\caption{Statistics of daily operation hours by teams.}
	\label{tab:TeamHourStats}
\end{table}

To estimate the daily working hours per team, we adjust the total team-hour to reflect the full set of eviction orders in the CCSO dataset. Since our analysis is based on 30,538 unique eviction orders, and 61.78\% of all orders are served (Table \ref{tab:CaseOutcomePercentage}), we calculate the expected number of served orders as $0.6709\times 73,682=45,521$. Thus, our data represents roughly $30,538/45,521=0.6709$ of the served orders. We scale up the total team-hour by dividing by this percentage: 
\begin{align*}
    \text{Adjusted total team-hour} &= \frac{14,224.08}{0.6709} = 21,201.49.
\end{align*}
Assuming four teams and 1,180 days (the time duration covered by the dataset), we estimate the daily working hours as $21,201.49/(4\times 1,180)=4.49$ hours. 

To account for travel time between the depot and the first eviction, as well as between the last eviction and the depot, we add an additional 0.5 hours. This brings the estimated daily working hours per team to approximately 5 hours.

\subsection{Travel Times Estimation}
\label{subapp:TravelTimeEstimation}

To prepare the data for regression analysis on the travel time between eviction orders, we use 30,747 unique orders. The data is processed as follows:
\begin{enumerate}
	\item {\bf Identifying serving deputies and routes}: For each unique order, we determine the deputies involved. Deputies who serve the same orders are grouped into the same route, while deputies with no overlap in orders are treated as having separate routes.
	
	Note that this is different from how we group serving deputies into teams (as in Appendix \ref{subapp:DailyWorkingHoursEstimation}), as as deputies may only handle a subset of orders within a team. As the deputies may serve orders individually and simultaneously, we should consider their routes as separate data points for the travel time estimation. 
	\item {\bf Defining consecutive case pairs}: For each route, we consider consecutive pairs of orders. The travel distance between each case pair can be calculated based on the latitude and longitude of the locations, and the travel time is estimated as the difference between the end time of the first case and the start time of the next case.
\end{enumerate} 

Using this approach, we fit a regression model to estimate travel time and obtain a coefficient $\beta=1.9792$.  

\subsection{Estimation of Deadline}
\label{subapp:CommonDeadlineEstimation}

To estimate the deadline for eviction orders with a deadline, we utilize the time duration between the order received date and the associated deadline from the CCSO dataset. Figure \ref{fig:CommonDeadline} in Section \ref{subsec:Data} is based on 53,160 unique eviction orders that have a valid entry for the deadline variable.

\subsection{Order Age Before Removal Due to Being Served}
\label{subapp:OrderAge}

Based on 39,695 received eviction orders that were served, we check the age of these orders at the time they are removed from the system.
Figure \ref{fig:AgeBeforeServed} presents the distribution of order age upon removal.
\begin{figure}[h!]
\begin{subfigure}[h]{0.5\linewidth}
\includegraphics[width=0.9\linewidth]{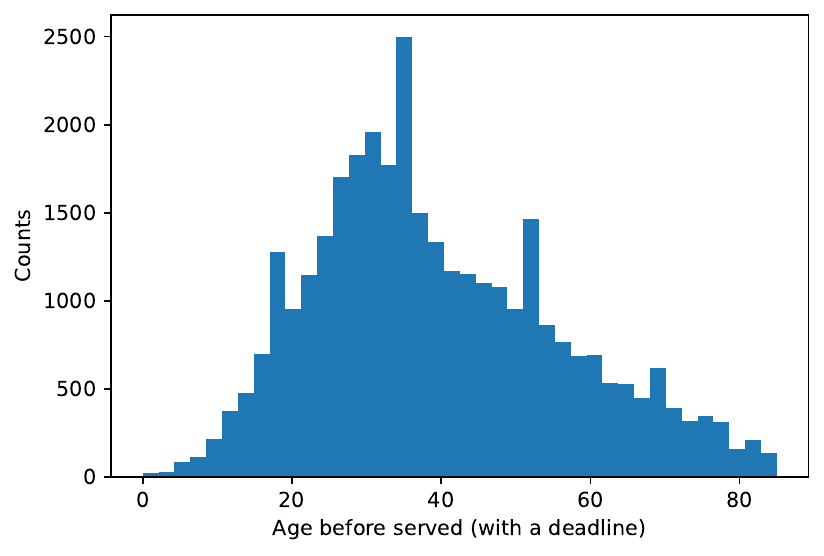}
\caption{With a deadline}
\end{subfigure}
\hfill
\begin{subfigure}[h]{0.5\linewidth}
\includegraphics[width=0.9\linewidth]{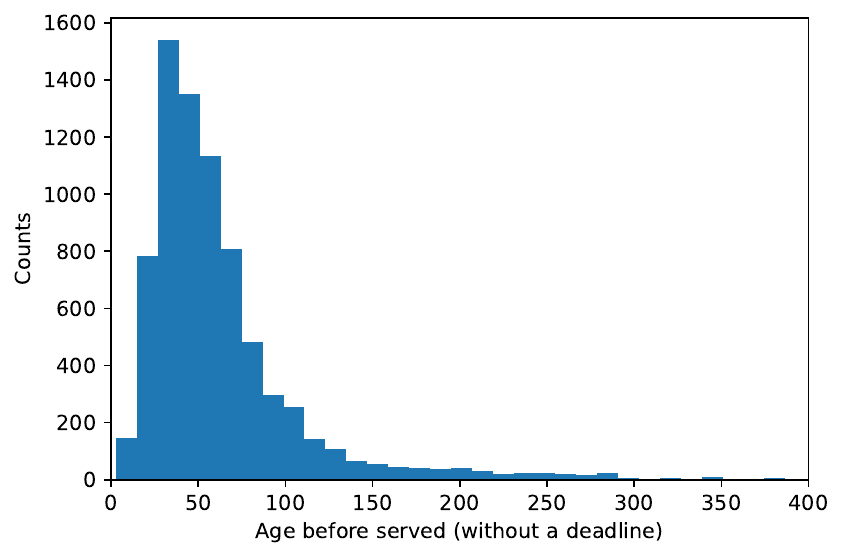}
\caption{Without a deadline}
\end{subfigure}%
\caption{Order age upon removal due to being served.}
\label{fig:AgeBeforeServed}
\end{figure}

For orders with a deadline, we further analyze the time elapsed from the order's received date to its service date as a proportion of the total deadline duration. For each served order, this percentage is calculated as the ratio of the time from the received date to the service date over the time from the received date to the deadline. Figure \ref{fig:UntilDieRatio} presents the histogram of these orders based on the percentage of the deadline utilized for service. We restrict our analysis to orders served before their deadline, which accounts for 30,952 unique orders. We exclude orders served exactly on their deadline because they might have data issues, such as being recorded as served on the deadline while also being marked as missing a deadline. 

\begin{figure}[h!]
    \centering
    \includegraphics[width=0.5\linewidth]{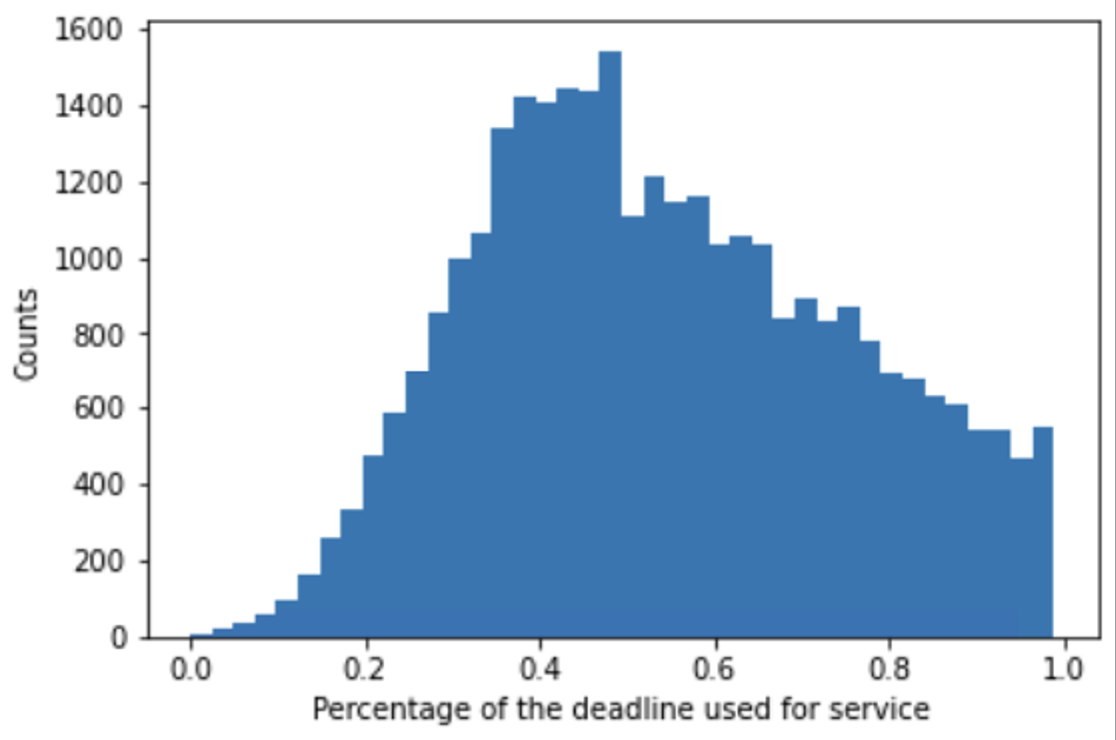}
    \caption{The percentage of the deadline used before an order is served.}
    \label{fig:UntilDieRatio}
\end{figure}

We see that fewer orders are served at lower percentages of the deadline, with an increase in orders served closer to the deadline. This further suggests that the deadline information implicitly affects the eviction enforcement planning in practice.

\subsection{Mean and Maximum Numbers of Daily Pending Orders From Different Classes}
\label{subapp:MeanMaxNumPending}

During our simulation study, Algorithm \ref{alg:HApproximation} requires $n_k^{\max}$, which represents the maximum number of pending orders for class $k$, while Algorithm \ref{alg:GenericPolicy} requires $n_k^{\rm mean}$, the mean number of pending orders for class $k$. 
Table \ref{tab:MaxNumPendingByClass} presents the estimated mean and maximum numbers of pending orders for each class. Note that, similar as in Section \ref{subapp:DetailedDataDescription}, we also scale up the values directly from the data by a factor of 1.29 and exclude the data from the first and last 50 days of our observation period for the analysis. 
\begin{table}[h!]
	\centering
	\begin{tabular}{ c|c|c|| c|c|c}
		\toprule
		$k$ & $n_k^{\rm mean}$ & $n_k^{\max}$ & $k$ & $n_k^{\rm mean}$ & $n_k^{\max}$   \\   \midrule
		1 & 268.47 & 396.42 & 13 & 193.05 & 299.57  \\
		2 & 54.32 & 92.97 & 14 & 77.55 & 126.54  \\
		3 & 63.75 & 111.05 & 15 & 53.24 & 102.01   \\
		4 & 537.85 & 726.98 & 16 & 103.09 & 161.41  \\
		5 & 253.17 & 358.97 & 17 & 105.72 & 174.32 \\
		6 & 197.54 & 299.57 & 18 & 45.71 & 95.55  \\
		7 & 267.50 & 384.80 & 19 & 37.83 & 74.89  \\
		8 & 99.18 & 149.79 & 20 & 71.62 & 118.80  \\
		9 & 56.81 & 92.97 & 21 & 52.32 & 103.30  \\
		10 & 377.48 & 546.20 & 22 & 143.67 & 304.74 \\
		11 & 46.85 & 78.77 & 23 & 51.18 & 89.10   \\
		12 & 94.24 & 158.82 & 24 & 118.40 & 260.83  \\
		\bottomrule
	\end{tabular}
	\caption{The mean and maximum number of pending orders from each class based on the CCSO dataset.}
	\label{tab:MaxNumPendingByClass}
\end{table}

\section{Calibration for the Benchmark Policies}
\label{app:ModelCalibration}

In this section, we discuss how we calibrate the model parameters for the urgency-based policy and the threshold-based policy in Appendices \ref{subsec:FCFS_Calibration} and \ref{subsec:ThresholdBased_Calibration}, respectively

\subsection{Calibration for the Urgency-Based Policy}
\label{subsec:FCFS_Calibration}

As discussed in Section \ref{sec:BenchmarkPolicies}, the urgency-based policy requires a parameter $\tilde{d}$, which represents the artificial common deadline for orders without a deadline. We determine this parameter by calibrating our simulation results to ensure that the service effort allocation between orders with and without a deadline is similar to that observed in the CCSO dataset. According to the dataset, an average of 41.52 orders are served daily, with 34.47 orders (approximately 83\%) having deadlines and 7.05 orders (approximately 17\%) without deadlines; see Appendix \ref{subapp:DetailedDataDescription} for further details.

We aim to select $\tilde{d}$ so that approximately 83\% of the service effort is allocated to orders with a deadline and 17\% to those without. 
The CCSO dataset shows that the average age before orders are removed from the system due to being served is 39.56 days for orders with deadlines and 73.88 days for those without; see Figure \ref{fig:AgeBeforeServed} in Appendix \ref{subapp:OrderAge} for the age distribution. To determine an appropriate $\tilde{d}$, we consider the 90th to 95th percentiles of the age at which orders without a deadline are served. Table \ref{tab:AgeBeforeServedQuantile} shows the value for each of these percentiles.    
\begin{table}[h!]
		\centering
		\begin{tabular}{c || c |c |c|c|c|c}
			\toprule
			Percentile & 95th & 94th & 93th & 92th & 91th & 90th  \\   \midrule
			Age & 181 & 161 & 143 & 131 & 123 & 115  \\  
			\bottomrule
		\end{tabular}
        \caption{Percentile of the age at which orders without a deadline are served.}
        \label{tab:AgeBeforeServedQuantile}
	\end{table}

Table \ref{tab:NumServed_BudgetedFCFS_DiffDie} shows the number of orders served in the simulation and the fraction of service effort allocated to orders with deadlines under the urgency-based policy for various values of $\tilde{d}$. We set $\tilde{d}=143$ as this value ensures a balance in service effort between orders with and without deadlines that aligns closely with the observed data.
	\begin{table}[h!]
		\centering
		\begin{tabular}{c|c|c|c}
			\toprule
			Percentile chosen for & Number of orders  & Number of orders & Fraction of service effort  \\ 
			the urgency-based & served daily & served daily & allocated to orders  \\ 
            policy & (with a deadline) & (without a deadline) &  with a deadline \\ \midrule 
			95th & 18.86 $\pm$ 1.28 & 2.73 $\pm$ 0.11 & 0.8736 \\  
			94th & 18.37 $\pm$ 1.25 & 3.20 $\pm$ 0.13 & 0.8516 \\
			93th & 17.80 $\pm$ 1.21 & 3.70 $\pm$ 0.15 & 0.8279 \\
			92th & 17.37 $\pm$ 1.18 & 4.07 $\pm$ 0.17 & 0.8105  \\
			91th & 17.09 $\pm$ 1.17 & 4.33 $\pm$ 0.18 & 0.7979 \\
			90th & 16.79 $\pm$ 1.15 & 4.62 $\pm$ 0.19 & 0.7842 \\ \midrule
			Data & 34.47 & 7.05 & 0.8302 \\
			\bottomrule
		\end{tabular}
		\caption{Estimated number of daily served orders for the urgency-based policy and various values of $\tilde{d}$.}
		\label{tab:NumServed_BudgetedFCFS_DiffDie}
	\end{table}

\subsection{Calibration for the Threshold-Based Policy}
\label{subsec:ThresholdBased_Calibration}

As discussed in Section \ref{sec:BenchmarkPolicies}, the threshold-based zone clearing policy depends on two key parameters: $\xi$, which determines whether to dispatch a vehicle to one of the suburb zones, and $w$, which is related to the probability that an order without a deadline will be served within the chosen zone for each vehicle. Note that $\xi$ affects the service rate across the suburb and city zones, while $w$ affects the service rate between orders with and without a deadline. 

To calibrate these parameters, we compare the simulated service rate with estimates derived from the CCSO dataset.  
Specifically, we consider $\xi\in\{150, 160, \ldots, 200\}$ and $w\in \{0.3, 0.5, 0.7, 0.9\}$. 
Tables \ref{tab:ThresholdBasedCalibration_CitySuburb} and \ref{tab:ThresholdBasedCalibration_Deadline} show the simulated service rates for city and suburb zones and for orders with and without a deadline, respectively.  
\begin{table}[h!]
    \centering
    \begin{tabular}{ c||c|c|c|c||c|c|c|c }
		\toprule
        & \multicolumn{4}{c||}{City} & \multicolumn{4}{c}{Suburb}  \\  \midrule
        \diagbox{$\xi$}{$w$} & 0.3 & 0.5 & 0.7 & 0.9 & 0.3 & 0.5 & 0.7 & 0.9  \\  \midrule
        150 & 32.02 & 32.21 & 32.24 & 32.49 & 8.23 & 8.18 & 8.21 & 8.04 \\
        160 & 32.09 & 32.54 & 32.77 & 32.77 & 8.19 & 7.98 & 7.83 & 7.80 \\
        170 & 32.56 & 33.00 & 33.41 & 33.33 & 7.86 & 7.59 & 7.28 & 7.32 \\
        180 & 33.12 & 33.59 & 33.70 & 33.80 & 7.40 & 7.13 & 7.07 & 6.99 \\
        190 & 33.50 & 33.90 & 34.38 & 34.65 & 7.14 & 6.89 & 6.49 & 6.32 \\
        200 & 34.15 & 34.49 & 35.21 & 35.21 & 6.62 & 6.41 & 5.85 & 5.82 \\  
        \midrule
        Data & \multicolumn{4}{c||}{33.23} & \multicolumn{4}{c}{8.29} \\
		\bottomrule
	\end{tabular}
    \caption{Simulated service rate in the city and suburb zones.}
    \label{tab:ThresholdBasedCalibration_CitySuburb}   
\end{table}

\begin{table}[h!]
    \centering
    \begin{tabular}{ c||c|c|c|c||c|c|c|c }
		\toprule
        & \multicolumn{4}{c||}{With a deadline} & \multicolumn{4}{c}{Without a deadline}  \\  \midrule
        \diagbox{$\xi$}{$w$} & 0.3 & 0.5 & 0.7 & 0.9 & 0.3 & 0.5 & 0.7 & 0.9  \\  \midrule
        150 & 33.61 & 32.95 & 32.50 & 32.22 & 6.65 & 7.44 & 7.96 & 8.31 \\
        160 & 33.65 & 33.12 & 32.61 & 32.31 & 6.64 & 7.40 & 7.99 & 8.27 \\
        170 & 33.77 & 33.18 & 32.88 & 32.52 & 6.64 & 7.41 & 7.81 & 8.13 \\
        180 & 33.98 & 33.35 & 32.97 & 32.64 & 6.54 & 7.37 & 7.80 & 8.14 \\
        190 & 34.16 & 33.48 & 33.13 & 32.94 & 6.48 & 7.31 & 7.74 & 8.02 \\
        200 & 34.33 & 33.65 & 33.51 & 33.11 & 6.43 & 7.25 & 7.54 & 7.92 \\  
        \midrule
        Data & \multicolumn{4}{c||}{34.47} & \multicolumn{4}{c}{7.05} \\
		\bottomrule
	\end{tabular}
    \caption{Simulated service rate for orders with or without a deadline.}
    \label{tab:ThresholdBasedCalibration_Deadline}    
\end{table}

Given that the majority of the orders are from the city zones and have deadlines, we primarily focus on matching the service rate for city zones and for orders with a deadline with the estimates from the CCSO dataset. Specifically, we consider the combination of $\xi$ and $w$ that result in simulated values within a 5\% range of the data for the orders from the city and orders with a deadline. 
Tables \ref{tab:Performance_ThresholdBasedEqualPrize}--\ref{tab:Performance3_ThresholdBasedEqualPrize} show the simulated performance metrics.
\begin{table}[h!]
	\centering
	\begin{tabular}{c|c|c|c}
		\toprule
		& Missing deadline & Cancellation percentage & Cancellation percentage \\ 
		$(\xi, w)$ & percentage & (with deadline) & (without deadline) \\  \midrule
        (170, 0.3) & $20.97\% \pm 0.37\%$ & $27.10\% \pm 0.13\%$ & $43.38\% \pm 0.33\%$  \\ 
        (180, 0.3) & $20.95\% \pm 0.26\%$ & $26.95\% \pm 0.14\%$ & $43.67\% \pm 0.32\%$  \\
        (190, 0.3) & $20.54\% \pm 0.23\%$ & $26.75\% \pm 0.13\%$ & $43.91\% \pm 0.30\%$ \\
        (190, 0.5) & $21.81\% \pm 0.28\%$ & $27.16\% \pm 0.19\%$ & $37.54\% \pm 0.23\%$  \\ \midrule
        Data & 18.83\% & 22.35\% & 42.31\%  \\
		\bottomrule
	\end{tabular}
	\caption{Estimated missing deadline percentage and cancellation percentage under the threshold-based policy with various combinations of $\xi$ and $w$.}
	\label{tab:Performance_ThresholdBasedEqualPrize}
\end{table}


\begin{table}[h!]
	\centering
	\begin{tabular}{c|c|c}
		\toprule
		& Number of orders waiting & Number of orders waiting \\ 
        & to be enforced & to be enforced \\
		$(\xi, w)$ & (with a deadline) & (without a deadline)  \\  \midrule
        (170, 0.3) & 1,993.59 $\pm$ 121.09 & 637.50 $\pm$ 34.23  \\ 
        (180, 0.3) & 1,993.90 $\pm$ 119.32 & 635.53 $\pm$ 32.90   \\
        (190, 0.3) & 1,976.35 $\pm$ 116.13 & 638.38 $\pm$ 32.59  \\
        (190, 0.5) & 1,993.85 $\pm$ 114.43 & 546.21 $\pm$ 29.61  \\ \midrule
        Data & 2,326.46 & 1,057.62  \\
		\bottomrule
	\end{tabular}
	\caption{Estimated number of orders waiting to be enforced under the threshold-based policy with various combinations of $\xi$ and $w$.}
	\label{tab:Performance2_ThresholdBasedEqualPrize}
\end{table}


\begin{table}[h!]
	\centering
	\begin{tabular}{c|c|c}
		\toprule
		& Number of orders & Number of orders  \\ 
        & served daily & served daily  \\
		$(\xi, w)$ & (with a deadline) & (without a deadline)  \\  \midrule
        (170, 0.3) & 33.83 $\pm$ 2.27 & 6.60 $\pm$ 0.25 \\ 
        (180, 0.3) & 33.99 $\pm$ 2.32 & 6.54 $\pm$ 0.26   \\
        (190, 0.3) & 34.16 $\pm$ 2.38 & 6.48 $\pm$ 0.26  \\
        (190, 0.5) & 33.50 $\pm$ 2.42 & 7.28 $\pm$ 0.31  \\ \midrule
        Data & 34.47 & 7.05  \\
		\bottomrule
	\end{tabular}
	\caption{Estimated number of orders served daily under the threshold-based policy with various combinations of $\xi$ and $w$.}
    \label{tab:Performance3_ThresholdBasedEqualPrize}
\end{table} 

After further comparing the simulated performance metrics, we select $\xi=190$ and $w=0.3$ as these values yield simulation results that better match the estimates from the CCSO dataset.

\section{Detailed Discussion on the Calculation of the Performance Metrics}
\label{app:PerformanceMeasures}

As shown in Table \ref{tab:PerformanceMeasure}, we focus on four primary performance metrics to compare the proposed policy, benchmarks, and data. In this section, we provide a detailed description of how each metric is calculated from the simulation results.

Our simulation runs for $D$ simulated days. Let $I_j$ represent the set of initial pending orders, either with a deadline (if $j=1$) or without (if $j=0$).
For each simulated day $d$, where $d=1,\ldots, D$, let $A_{jd}$ denote the set of orders that arrive on day $d$, with a deadline if $j=1$ and without if $j=0$. Additionally, let $C_{jd}$ and $M_d$ represent the sets of canceled or missed-deadline orders, respectively, among all orders arrived up to day $d$ (i.e., orders within $\cup_{d'=1}^d A_{jd}$). Finally, let $S_{jd}$ be the set of served orders among the initial orders and all orders arrived up to day $d$ (i.e., orders within $I_j\cup (\cup_{d'=1}^d A_{jd})$).

With the above notation, we let $N_{jd}$ be the set of pending orders with or without a deadline on the $d$th day, with $N_{j0}$ indicating the initial pending orders.
We then have the following relationship for $N_{jd}$:  
\begin{align*}
    N_{jd} &= \begin{cases}
        I_j, & \text{if $d=0$},  \\
        \left(N_{j d-1}\cup A_{jd}\right)\setminus \left(C_{jd} \cup S_{jd} \cup M_d\right), & \text{if $1\leq d\leq D$ and $j=1$}, \\
        \left(N_{j d-1}\cup A_{jd}\right)\setminus \left(C_{jd} \cup S_{jd}\right), & \text{if $1\leq d\leq D$ and $j=0$}.
    \end{cases}
\end{align*}
We now discuss how the performance metrics are computed. 
\paragraph{Missing deadline percentage.} The missing deadline percentage is estimated as $\sum_{d=1}^D |M_d |/\sum_{d=1}^D (|A_{1d}|-|{\cal C}_{1d}|)$.

\paragraph{Cancellation percentage.} 
The cancellation percentage for orders with and without a deadline is estimated as $\sum_{d=1}^D | C_{1d} |/ \sum_{d=1}^D |A_{1d}|$ and $| C_{0d} |/\sum_{d=1}^D |A_{0d}|$, respectively.

\paragraph{Number of orders waiting to be enforced.} 
The number of pending orders with and without a deadline is estimated as $\sum_{d=1}^D |{\cal N}_{1d}|/D$ and $\sum_{d=1}^D |{\cal N}_{0d}|/D$, respectively.

\paragraph{Number of orders served daily.} 
The number of daily served orders with and without a deadline is estimated as $\sum_{d=1}^D |{\cal S}_{1d}|/D$ and $\sum_{d=1}^D |{\cal S}_{0d}|/D$, respectively.

\section{Implementation Details of the Computational Method}

In this section, we provide additional implementation details of our computational method: Appendix \ref{subapp:DataPreprocessing} discusses how we scale the input data to train the neural network model. Appendix \ref{subapp:ScaledProposedPolicy} describes the adjustment of prizes in the proposed policy based on the trained gradient function with scaled input data.
Appendix
\ref{subapp:ReferencePolicy} discusses the chosen reference and behavior policies used for training. Appendix
\ref{subapp:ShapeConstraint} discusses the shape constraints applied during training to ensure certain properties of the model. Appendix 
\ref{subapp:NNArchitecture} specifies the architecture of the neural network used in the training process.

\subsection{Data Preprocesssing}
\label{subapp:DataPreprocessing}

Recall that Algorithm \ref{alg:MultiDimControl} simulates discretized RBM paths, Brownian increments, and the values of $\tilde{U}(T)$ in each  training iteration. To ensure the optimization algorithm converges efficiently, we scale the training data to balance the magnitudes of input and output.
To do so, we consider a scaling parameter $\kappa>0$ and define the scaled queue length process $\hat{Z}(t)$ that measures the backlog in batches as $\hat{Z}(t)=Z(t)/\kappa$. The state space is 
\begin{align}
    \hat{S} &= \prod_{k=1}^K \left[0, \hat{b}_k\right]\times\mathbb{R}_+^K,
\end{align}
where $\hat{b}_k=b_k/\kappa$. We rewrite the state-space constraint as 
\begin{align}
    \hat{Z}(t) \in \hat{S} \quad \text{ for } t\geq 0. \label{eqn:StateSpaceConstraint_Hat}
\end{align}
The evolution of the scaled system state follow similar forms to Equations (\ref{eqn:StateDynamics_WithDeadline}) and (\ref{eqn:StateDynamics_WithoutDeadline}): For classes with a deadline ($k=1,\ldots, K$), we have 
\begin{align}
\label{eqn:StateDynamics_WithDeadline2}
    \hat{Z}_k(t) &= \hat{Z}_k(0)+\hat{\lambda}_k t+\hat{B}_k(t)-\int_{0}^t \gamma_k \hat{Z}_k(s)ds-\int_0^t \hat{\mu}_k(\hat{Z}_k(s))ds +\hat{L}_k(t)-\hat{U}_k(t), \quad t\geq 0, 
\end{align}
while for classes without a deadline ($k=K+1,\ldots,2K$), we have
\begin{align}
\label{eqn:StateDynamics_WithoutDeadline2}
    \hat{Z}_k(t) &= \hat{Z}_k(0)+\hat{\lambda}_k t+\hat{B}_k(t)-\int_{0}^t \gamma_k \hat{Z}_k(s)ds-\int_0^t \hat{\mu}_k(\hat{Z}_k(s))ds +\hat{L}_k(t).
\end{align}
We have $\hat{\lambda}_k=\lambda_k/\kappa$, $\hat{L}_k(t)=L(t)/\kappa$, $\hat{U}_k(t)=U(t)/\kappa$, $\hat{\mu}(\hat{Z}(t))=\mu(Z(t))/\kappa$, and $\hat{B}_k$ is a driftless Brownian motion with variance parameter $\hat{\sigma}_k^2 = (\sigma_k/\kappa)^2$ for $k=1,\ldots, 2K$. Similar to $L$ and $U$ satisfying Equations (\ref{eqn:L})-(\ref{eqn:UpperPushingConstraint}), $\hat{L}$ and $\hat{U}$ satisfy the following conditions:
\begin{align}
    & \hat{L} \text{ is non-decreasing, non-anticipating with } \hat{L}(0)=0, \label{eqn:LHat}   \\
    & \hat{U} \text{ is non-decreasing, non-anticipating with } \hat{U}(0)=0, \label{eqn:UHat}  \\
    & \int_{0}^t \hat{Z}_k (s) d \hat{L}_k(s)= 0, \quad k=1,\ldots, 2K, \quad t\geq 0, \label{eqn:LowerPushingConstraint_Hat}  \\
    & \int_0^t \left( \hat{b}_k - \hat{Z}_k(s) \right) d\hat{U}_k(s) = 0, \quad k=1, \ldots, K,  \quad \hspace{0.2cm} t\geq 0. \label{eqn:UpperPushingConstraint_Hat}
\end{align} 
Similar to Equations (\ref{eqn:Objective}) and (\ref{eqn:Constraints}), with the scaled process, the system manager seeks to choose a policy $\hat{\mu}(\hat{z})\in \hat{{\cal A}}(\hat{z})$ for $\hat{z}\in \hat{S}$, where $\hat{\cal A}(\hat{z})$ is the set of feasible service rate in terms of batches, such that
\begin{align}
    &\text{minimize} \quad \overline{\lim}_{T\rightarrow\infty} \frac{1}{T} \mathbb{E}\left[ \int_0^T \kappa\sum_{k=1}^{2K} c_k(\hat{Z}_k(t)) dt+\kappa p\sum_{k=1}^{K} \hat{U}_k(T) \right]  \label{eqn:Objective_Hat}  \\
    &\text{subject to} \quad (\ref{eqn:StateSpaceConstraint_Hat})-(\ref{eqn:UpperPushingConstraint_Hat}), \label{eqn:Constraints_Hat}
\end{align} 
Note that we can drop $\kappa$ in (\ref{eqn:Objective_Hat}) since $\kappa$ does not affect the optimal solution. 

\paragraph{HJB and an equivalent SDE with scaled process.} Similar to Section \ref{sec:HJB}, we have the HJB equation with the scaled process as follows. 
\begin{align}
\label{eqn:HJB_Hat}
	\hat{\beta} &= {\cal L} \hat{V}(\hat{z})- \max_{\mu\in \hat{\mathcal{A}}(\hat{z})} \left\{\mu \cdot \nabla \hat{V}(\hat{z}) \right\}+\sum_{k=1}^{2K} c_k(\hat{z}_k), \quad \hat{z}\in \hat{S}, 
\end{align} subject to the boundary conditions 
\begin{align}
    \frac{\partial \hat{V}(\hat{z})}{\partial \hat{z}_k} & = 0 \quad \text{ if } \hat{z}_k=0 \quad (k=1,\ldots, 2K), \label{eqn:HJB_Hat_Boundary1} \\
    \frac{\partial \hat{V}(\hat{z})}{\partial \hat{z}_k} &= p \quad \text{ if } \hat{z}_k=\hat{b}_k \quad (k=1,\ldots, K), \label{eqn:HJB_Hat_Boundary2} \\
    \lim_{\hat{z}_k\rightarrow\infty} \frac{\partial \hat{V}(\hat{z})}{\partial \hat{z}_k} &= \frac{c_2}{\gamma_k} \quad (k=K+1,\ldots,2K),   \label{eqn:HJB_Hat_Boundary3}
\end{align}
where the generator ${\cal L}$ is given as follows: 
\begin{align}
{\cal L} \hat{V}(\hat{z}) &= \frac{1}{2} \sum_{k=1}^{2K} \hat{\sigma}_{k}^2 \frac{\partial^2 \hat{V}(\hat{z})}{\partial \hat{z}_k^2} + \sum_{k=1}^K \left(\hat{\lambda}_k-\gamma_k \hat{z}_k\right) \frac{\partial \hat{V}(\hat{z})}{\partial \hat{z}_k}. 
\end{align}

With the reference policy $\tilde{\hat{\mu}}$ for the scaled process, the corresponding reference process, denoted by $(\tilde{\hat{Z}}, \tilde{\hat{L}}, \tilde{\hat{U}})$, satisfy the following: For $t\geq 0$,
\begin{alignat*}{2}
    &\tilde{\hat{Z}}_k(t) = \tilde{\hat{Z}}_k(0)+\left(\hat{\lambda}_k-\tilde{\hat{\mu}}_k\right) t+\hat{B}_k(t)-\int_{0}^t \gamma_k \tilde{\hat{Z}}_k(s)ds +\tilde{\hat{L}}_k(t)-\tilde{\hat{U}}_k(t),&& \quad k=1,\ldots, K \\
    &\tilde{\hat{Z}}_k(t) = \tilde{\hat{Z}}_k(0)+\left(\hat{\lambda}_k-\tilde{\hat{\mu}}_k\right) t+\hat{B}_k(t)-\int_{0}^t \gamma_k \tilde{\hat{Z}}_k(s)ds+\tilde{\hat{L}}_k(t), && \quad k=K+1,\ldots, 2K,   \\
    &\int_0^t \tilde{\hat{Z}}_k(s) d\tilde{\hat{L}}_k(s) = 0, \quad k=1,\ldots, 2K,  \\
    &\int_0^t (\hat{b}_k - \tilde{\hat{Z}}_k(s) ) d\tilde{\hat{U}}_k (s) = 0, \quad k=1,\ldots, K, \\
    & \tilde{\hat{Z}}(t) \in \hat{S},   \\
    & \tilde{\hat{L}} \text{ is non-decreasing, non-anticipating with } \tilde{\hat{L}}(0)=0, \\
    & \tilde{\hat{U}} \text{ is non-decreasing, non-anticipating with } \tilde{\hat{U}}(0)=0.
\end{alignat*}
We further let 
\begin{align}
    \label{eqn:AuxiliaryFunction_Hat}
    \hat{F}(z, v) &= \max_{\mu\in \hat{\cal{A}}(z)} \left\{ \mu \cdot v \right\}-\sum_{k=1}^{2K} c_k(z_k), \quad z\in \hat{S}, \; v\in \mathbb{R}_+^{2K}. 
\end{align}

Similar to Propositions (\ref{prop:EquivalentSDE}) and (\ref{prop:EquivalentSDE2}), we have the following two propositions that show the equivalent SDE for the scaled process. 
\begin{proposition}
\label{prop:EquivalentSDE_Scaled}
    If $\hat{V}(\cdot)$ and $\hat{\beta}$ solve the HJB equations (\ref{eqn:HJB_Hat})-(\ref{eqn:HJB_Hat_Boundary3}), then the following holds almost surly for $T>0$,
    \begin{align}
    \begin{split}
        \hat{V}(\tilde{\hat{Z}}(T))-\hat{V}(\tilde{\hat{Z}}(0)) &= \int_0^T \nabla \hat{V} (\tilde{\hat{Z}}(t))  d\hat{B}(t) + \hat{\beta} T - p \sum_{k=1}^K \tilde{U}_k(T) \\
        & \quad -\int_0^T \tilde{\hat{\mu}}\cdot \nabla \hat{V}(\tilde{\hat{Z}}(t)) dt +\int_0^T \hat{F} (\tilde{\hat{Z}}(t), \nabla \hat{V}(\tilde{\hat{Z}}(t)) ) dt.  \label{eqn:Identity_Hat}
    \end{split}
    \end{align}
\end{proposition}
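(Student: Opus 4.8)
The plan is to mimic the proof of Proposition~\ref{prop:EquivalentSDE} verbatim, now applied to the scaled process $(\tilde{\hat{Z}}, \tilde{\hat{L}}, \tilde{\hat{U}})$ and the scaled HJB system \eqref{eqn:HJB_Hat}--\eqref{eqn:HJB_Hat_Boundary3}. The only genuine difference is bookkeeping: carrying tildes/hats and replacing $(\lambda_k,\sigma_k,b_k,\mu)$ with $(\hat{\lambda}_k,\hat{\sigma}_k,\hat{b}_k,\hat{\mu})$ throughout. The substantive observation (which I would state but not belabor) is that the scaling $\hat{Z}=Z/\kappa$, $\hat{\lambda}_k=\lambda_k/\kappa$, $\hat{\sigma}_k^2=(\sigma_k/\kappa)^2$, $\hat{b}_k=b_k/\kappa$, $\hat{\mu}=\mu/\kappa$, and $\hat{U}_k=U_k/\kappa$ is an internally consistent change of variables: the scaled dynamics \eqref{eqn:StateDynamics_WithDeadline2}--\eqref{eqn:StateDynamics_WithoutDeadline2} and the scaled reflection conditions \eqref{eqn:LHat}--\eqref{eqn:UpperPushingConstraint_Hat} have exactly the same structural form as the unscaled ones, so every step of the original argument goes through unchanged. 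Note in particular that the penalty term in \eqref{eqn:Identity_Hat} is written with $p\sum_k \tilde{U}_k(T)$ (the \emph{unscaled} missed-deadline count), consistent with the boundary condition \eqref{eqn:HJB_Hat_Boundary2} reading $\partial\hat{V}/\partial\hat{z}_k = p$ at $\hat{z}_k=\hat{b}_k$ and with the fact that $\tilde{\hat{U}}_k = \tilde{U}_k/\kappa$ while the boundary integral $\int_0^t(\hat{b}_k-\hat{Z}_k)\,d\hat{U}_k$ picks up a compensating factor.

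Concretely, the steps are: (i) Apply It\^o's formula to $\hat{V}(\tilde{\hat{Z}}(t))$ over $[0,T]$, producing the stochastic integral $\int_0^T \nabla\hat{V}(\tilde{\hat{Z}}(t))\cdot d\hat{B}(t)$, the second-order term $\tfrac12\sum_k \hat{\sigma}_k^2 \partial^2\hat{V}/\partial\hat{z}_k^2$, the first-order drift term involving $(\hat{\lambda}_k-\gamma_k\tilde{\hat{Z}}_k(t)-\tilde{\hat{\mu}}_k)$, and two boundary terms $\int_0^T \nabla\hat{V}\cdot d\tilde{\hat{L}}$ and $-\int_0^T \sum_{k\le K}(\partial\hat{V}/\partial\hat{z}_k)\,d\tilde{\hat{U}}_k$. (ii) Use the boundary condition \eqref{eqn:HJB_Hat_Boundary1} together with $\int_0^t \tilde{\hat{Z}}_k\,d\tilde{\hat{L}}_k=0$ to kill the $\tilde{\hat{L}}$ term (the integrand $\partial\hat{V}/\partial\hat{z}_k$ vanishes on the support of $d\tilde{\hat{L}}_k$). (iii) Use \eqref{eqn:HJB_Hat_Boundary2} together with $\int_0^t(\hat{b}_k-\tilde{\hat{Z}}_k)\,d\tilde{\hat{U}}_k=0$ to evaluate the $\tilde{\hat{U}}$ term as $-p\sum_{k\le K}\tilde{\hat{U}}_k(T) = -p\sum_{k\le K}\tilde{U}_k(T)/\kappa$ — here I would double-check whether the intended statement is in scaled or unscaled units and match the paper's convention exactly. (iv) Recognize the remaining drift-plus-diffusion integrand as ${\cal L}\hat{V}(\tilde{\hat{Z}}(t)) - \tilde{\hat{\mu}}\cdot\nabla\hat{V}(\tilde{\hat{Z}}(t))$, then substitute the scaled HJB equation \eqref{eqn:HJB_Hat} in the form ${\cal L}\hat{V}(\hat{z}) = \hat{\beta} + \max_{\mu\in\hat{\cal A}(\hat{z})}\{\mu\cdot\nabla\hat{V}(\hat{z})\} - \sum_k c_k(\hat{z}_k) = \hat{\beta} + \hat{F}(\hat{z},\nabla\hat{V}(\hat{z})) + \sum_k c_k(\hat{z}_k) - \sum_k c_k(\hat{z}_k)$, i.e. ${\cal L}\hat{V}(\hat{z}) = \hat{\beta} + \hat{F}(\hat{z},\nabla\hat{V}(\hat{z})) + \sum_k c_k(\hat{z}_k)$ — wait, more carefully, from \eqref{eqn:HJB_Hat} and the definition \eqref{eqn:AuxiliaryFunction_Hat} one gets ${\cal L}\hat{V}(\hat{z}) - \tilde{\hat{\mu}}\cdot\nabla\hat{V}(\hat{z}) = \hat{\beta} + \hat{F}(\hat{z},\nabla\hat{V}(\hat{z})) - \tilde{\hat{\mu}}\cdot\nabla\hat{V}(\hat{z})$ after adding and subtracting; plugging in and integrating gives the $\hat{\beta}T$, $-\int_0^T\tilde{\hat{\mu}}\cdot\nabla\hat{V}\,dt$, and $+\int_0^T\hat{F}(\tilde{\hat{Z}}(t),\nabla\hat{V}(\tilde{\hat{Z}}(t)))\,dt$ terms. (v) Collect terms and rearrange to obtain \eqref{eqn:Identity_Hat}.

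There is no real obstacle here — this proposition is an immediate corollary of Proposition~\ref{prop:EquivalentSDE} under the affine rescaling, and a clean write-up would simply say ``the proof is identical to that of Proposition~\ref{prop:EquivalentSDE}, with $(Z,\lambda_k,\sigma_k,b_k,\mu,{\cal A})$ replaced by $(\hat{Z},\hat{\lambda}_k,\hat{\sigma}_k,\hat{b}_k,\hat{\mu},\hat{\cal A})$ and using the scaled HJB equation \eqref{eqn:HJB_Hat}--\eqref{eqn:HJB_Hat_Boundary3} and the scaled reflection conditions \eqref{eqn:LHat}--\eqref{eqn:UpperPushingConstraint_Hat}.'' The only point demanding care — and the single place I would slow down — is the unit convention on the penalty/pushing term: ensuring that the factor of $\kappa$ (or its absence) in front of $\sum_k\tilde{U}_k(T)$ in \eqref{eqn:Identity_Hat} is consistent with the scaled boundary condition \eqref{eqn:HJB_Hat_Boundary2} and with the objective \eqref{eqn:Objective_Hat}. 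Once that is pinned down, the rest is routine.
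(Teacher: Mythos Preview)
Your approach is correct and matches the paper's: the paper does not give an explicit proof of Proposition~\ref{prop:EquivalentSDE_Scaled} but simply introduces it with the phrase ``Similar to Propositions~\ref{prop:EquivalentSDE} and~\ref{prop:EquivalentSDE2}\ldots,'' so the intended argument is precisely the verbatim replay of the It\^o-formula computation from Proposition~\ref{prop:EquivalentSDE} with hatted quantities, which is exactly what you outline in steps (i)--(v).

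Your instinct to slow down at the pushing term is well-placed. Following the scaled dynamics and the boundary condition \eqref{eqn:HJB_Hat_Boundary2}, the It\^o computation produces $-p\sum_{k\le K}\tilde{\hat{U}}_k(T)$, not $-p\sum_{k\le K}\tilde{U}_k(T)$; the appearance of the un-hatted $\tilde{U}_k$ in the displayed identity \eqref{eqn:Identity_Hat} is a notational slip in the paper (mirroring a similar slip in the statement of Proposition~\ref{prop:EquivalentSDE}, where $U_k$ appears in place of $\tilde{U}_k$). There is no missing factor of $\kappa$ to hunt for --- the boundary value $p$ and the scaled pushing process $\tilde{\hat{U}}_k$ pair directly --- so you can drop the hedging about ``compensating factors'' and simply write the term as $-p\sum_k\tilde{\hat{U}}_k(T)$.
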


\begin{proposition}
    Suppose that $\hat{V}:\hat{S}\rightarrow \mathbb{R}$ is a ${\cal C}^2$ function, $\hat{G}:\hat{S}\rightarrow \mathbb{R}$ is continuous and $\hat{V}, \nabla \hat{V}, \hat{G}$ all have polynomial growth. Also assume that the following identity holds almost surely for some fixed $T>0$, a scalar $\hat{\beta}$ and every $\hat{Z}(0)=\hat{z}\in \hat{S}$: 
    \begin{align}
        \begin{split}
        \hat{V}(\tilde{\hat{Z}}(T))-\hat{V}(\tilde{\hat{Z}}(0)) &= \int_0^T \hat{G}( \tilde{\hat{Z}}(t)) d\hat{B}(t)+\hat{\beta} T- p \sum_{k=1}^{K} \hat{U}_k(T) \\
        &\quad -\int_0^T  \tilde{\hat{\mu}}\cdot \hat{G}(\tilde{\hat{Z}}(t)) dt +\int_0^T \hat{F}(\tilde{\hat{Z}}(t), \hat{G}(\tilde{\hat{Z}}(t))) dt. \label{eqn:Proposition2_Hat}
        \end{split}
    \end{align}
    Then, $\hat{G}(\cdot)=\nabla \hat{V}(\cdot)$ and $(\hat{V}, \hat{\beta})$ solve the HJB equations (\ref{eqn:HJB_Hat})-(\ref{eqn:HJB_Hat_Boundary3}). 
\end{proposition}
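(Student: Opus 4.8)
The plan is to observe that the scaled system is structurally a drift-rate-controlled RBM of exactly the form analyzed in Section \ref{sec:HJB}, only with the data $(\lambda_k,\sigma_k^2,\gamma_k,b_k)$ replaced by $(\hat\lambda_k,\hat\sigma_k^2,\gamma_k,\hat b_k)$, the state space $S$ replaced by $\hat S$, the auxiliary function $F$ replaced by $\hat F$ from (\ref{eqn:AuxiliaryFunction_Hat}), and the reference drift $\tilde{\mu}$ replaced by $\tilde{\hat\mu}$; its generator is the operator ${\cal L}$ displayed just above (\ref{eqn:AuxiliaryFunction_Hat}). Since the hypotheses imposed on $\hat V$ and $\hat G$ ($\mathcal{C}^2$, continuity, and polynomial growth of $\hat V,\nabla\hat V,\hat G$) are the exact analogues of those in Proposition \ref{prop:EquivalentSDE2}, I would present the proof as a direct invocation of Proposition \ref{prop:EquivalentSDE2} with the relabeled parameters, and then, for completeness, recapitulate the steps.

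First I would note that under $\tilde{\hat\mu}$ the scaled reference process $\tilde{\hat Z}$ admits a unique stationary distribution $\tilde{\hat\pi}$ — this is inherited from the unscaled case, since the state-dependent mean-reverting drift $-\gamma_k\hat z_k$ together with the reflecting/absorbing boundaries keeps the process positive recurrent and dividing by $\kappa>0$ does not change this — and take $\tilde{\hat Z}(0)\sim\tilde{\hat\pi}$ so that $\tilde{\hat Z}(t)\sim\tilde{\hat\pi}$ for all $t\geq 0$. Using time-homogeneity of $\tilde{\hat Z}$, I would write (\ref{eqn:Proposition2_Hat}) over each interval $[\ell T,(\ell+1)T]$ for $\ell=0,\dots,n-1$ and sum, obtaining the scaled analogue of (\ref{eqn:StationaryIdentity}). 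Taking expectations under $\tilde{\hat\pi}$ and letting $n\to\infty$ identifies $\hat\beta$ as the long-run average cost of $\tilde{\hat Z}$ with running cost $\tilde{\hat\mu}\cdot\hat G(\cdot)-\hat F(\cdot,\hat G(\cdot))$, which in turn produces a relative value function $\tilde{\hat V}$ solving the linear PDE obtained from (\ref{eqn:HJB_Hat})--(\ref{eqn:HJB_Hat_Boundary3}) by replacing $\max_{\mu}\{\mu\cdot\nabla\hat V\}$ with $\tilde{\hat\mu}\cdot\nabla\hat V-\tilde{\hat\mu}\cdot\hat G+\hat F(\cdot,\hat G)$ while keeping the same three boundary conditions.

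Next I would apply Itô's formula to $\tilde{\hat V}(\tilde{\hat Z}(t))$, substitute this linear PDE, condition on $\tilde{\hat Z}(0)=\hat z$ and take expectations, and do the same for $\hat V$ using the iterated form of (\ref{eqn:Proposition2_Hat}); subtracting gives $\hat V(\hat z)-\tilde{\hat V}(\hat z)=\E_{\hat z}[\hat V(\tilde{\hat Z}(nT))]-\E_{\hat z}[\tilde{\hat V}(\tilde{\hat Z}(nT))]$. Normalizing so that $\E_{\tilde{\hat\pi}}[\hat V]=\E_{\tilde{\hat\pi}}[\tilde{\hat V}]$ and invoking Vitali's convergence theorem — with uniform integrability supplied by the polynomial-growth hypotheses and $\sup_n\E_{\hat z}[(\,\cdot\,)^2]<\infty$ — the right-hand side vanishes as $n\to\infty$, so $\hat V\equiv\tilde{\hat V}$ and hence $\hat V$ solves the linear PDE. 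To upgrade $\hat G$ to $\nabla\hat V$, I would apply Itô to $\hat V(\tilde{\hat Z}(t))$, compare with (\ref{eqn:Proposition2_Hat}), deduce $\int_0^T(\hat G(\tilde{\hat Z}(t))-\nabla\hat V(\tilde{\hat Z}(t)))\cdot d\hat B(t)=0$ almost surely, and then by Itô's isometry conclude $\E_{\hat z}\int_0^T\|\hat G-\nabla\hat V\|^2\,dt=0$, so that $\hat G=\nabla\hat V$ on $\hat S$ by continuity. Substituting $\hat G=\nabla\hat V$ into the linear PDE and using the definition of $\hat F$ then recovers the HJB equation (\ref{eqn:HJB_Hat}).

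I do not anticipate any genuine obstacle: the argument is the one already carried out for Proposition \ref{prop:EquivalentSDE2}, and the only points deserving an explicit word are the existence and uniqueness of $\tilde{\hat\pi}$ for the scaled reference process and the uniform-integrability input to Vitali's theorem — both of which transfer verbatim from the unscaled setting because scaling by $\kappa>0$ preserves positive recurrence and polynomial growth. Everything else is the routine Itô-formula and linear-PDE bookkeeping, which I would abbreviate by citing the proof of Proposition \ref{prop:EquivalentSDE2}.
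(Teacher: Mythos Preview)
Your proposal is correct and matches the paper's treatment: the paper does not give a separate proof for the scaled proposition but simply introduces it with ``Similar to Propositions~\ref{prop:EquivalentSDE} and~\ref{prop:EquivalentSDE2}, we have the following two propositions,'' i.e., the scaled result is asserted to follow verbatim from the unscaled one after the relabeling $(\lambda_k,\sigma_k^2,b_k,S,F,\tilde\mu)\mapsto(\hat\lambda_k,\hat\sigma_k^2,\hat b_k,\hat S,\hat F,\tilde{\hat\mu})$. Your recapitulation of the stationary-distribution/ergodic-average step, the comparison PDE for $\tilde{\hat V}$, the Vitali argument, and the It\^o-isometry identification $\hat G=\nabla\hat V$ is exactly the proof of Proposition~\ref{prop:EquivalentSDE2} transported to the hatted quantities, so there is nothing to add.
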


Note that we have the trained neural network model $\hat{H}(\hat{z}, v)$ that aims to predict the prize collected for the budgeted prize-collecting VRP. Its input contains two pieces, namely the scaled number of pending orders $\hat{z}$ and the prize of serving one order $v$. Its output comes from the budgeted prize-collecting VRP that considers $\kappa \hat{z}$ orders and prize vector $v$, and reports the total prize collected in this budgeted prize-collecting VRP. 
We let $K(\hat{z}, \hat{v})=\max_{\mu\in\hat{\cal A}(\hat{z})}\{\mu\cdot \hat{v}\}$ be the scaled version of $\hat{H}(\hat{z}, v)$.
Therefore, we have \[ K(\hat{z}, \hat{v}) = \hat{H}(\hat{z}, \hat{v}/\kappa). \] That means, we have \[\hat{F}(z,v)=K(z,v)-\sum_{k=1}^{2K}c_k(z_k), \quad z_k\in \hat{S}, v\in \mathbb{R}_+^{2K}. \]

The loss function with the scaled process is 
\begin{align*}
	\hat{\ell} (w_1, w_2) & = \Var \Bigg(\hat{V}^{w_1} (\tilde{\hat{Z}} (T))- \hat{V}^{w_1} (\tilde{\hat{Z}}(0)) - \int_{0}^T \hat{G}^{w_2} (\tilde{\hat{Z}} (t)) d\tilde{B}(t) +p\sum_{k=1}^K \tilde{\hat{U}}_k(T) \\
    &\hspace{1cm}\quad +\int_0^T \tilde{\hat{\mu}}\cdot \hat{G}^{w_2}(\tilde{\hat{Z}}(t)) dt -\int_{0}^T  \hat{F}(\tilde{\hat{Z}}(t), \hat{G}^{w_2} (\tilde{\hat{Z}}(t)) )dt \Bigg). 
\end{align*}

\subsection{Proposed Policy with the Scaled Process}
\label{subapp:ScaledProposedPolicy}

Recall that the proposed policy utilizes the trained gradient function $G$ to assign prizes for daily pending orders (see Section \ref{sec:BenchmarkPolicies}). In our simulation study, we obtain the trained gradient function $\hat{G}$ based on the scaled process as discussed in Appendix \ref{subapp:DataPreprocessing}. Using $\hat{G}$, we then calculate the prizes following these adjustments: 
\begin{enumerate}
    \item For each day, we compute the number of pending orders $z_k$ for each class $k=1,\ldots,2K$. We then scale the number of pending orders into batches as $z_k/\kappa$. 
    \item 
    Recall that the unscaled version assigns a prize as $\pi_{{\cal I}^{(k)}}(i)=G_k^{w_2}(z_k-i+1)$, where ${\cal I}^{(k)}$ represents the indices of the pending orders in class $k$. With the scaled output prize from the trained function $\hat{G}^{w_2}$, we scale back the prize and assign the prize for serving order $i$ in class $k$ as follows: 
    \begin{align}
    \label{eqn:ScaledPrize}
        \pi_{{\cal I}^{(k)}}(i)= \frac{(z_k-i+1)/\kappa}{k}.
    \end{align}
\end{enumerate}
Note that the denominator in Equation (\ref{eqn:ScaledPrize}) can be omitted as we only aim to maintain the relative magnitude of prizes across pending orders.

\subsection{Reference Policy and Behavior Policy}
\label{subapp:ReferencePolicy}

As discussed in Sections \ref{sec:HJB} and \ref{sec:ComputationalMethod}, any reference policy can be used for computation. However, selecting a more appropriate reference policy $\tilde{\mu}$ can facilitate more effective training. Additionally, we may consider adjust the behavior of the generated sample path by changing the variance of the diffusion coefficient. 
In general, we aim to select a reference policy and a behavior policy that guide the generated sample paths to closely resemble those under the optimal policy.  

\paragraph{Reference policy.} 

We apply different reference policies for classes depending on whether they have a deadline.
\begin{itemize}
    \item Classes with a deadline:
    We consider the reference policy so that the generated sample paths remain close to values $b_k$, $b_k/2$, and 0. One may easily verify these values with the evolution of the system state discussed in Section \ref{sec:HJB}. 
\begin{align*}
    \tilde{\mu}_k &= \begin{cases}
        \lambda_k-\gamma_k b_k, & \text{with probability 1/3},  \\
        \lambda_k-\frac{1}{2}\gamma_k b_k, & \text{with probability 1/3},  \\
        \lambda_k, & \text{with probability 1/3}.
    \end{cases}
\end{align*}
    \item Classes without a deadline: 
    When a class $k$ does not have a deadline (i.e., $K+1\leq k\leq 2K$), there is no corresponding $b_k$. To also choose $\tilde{\mu}$ so that the generated sample paths are at a high, medium, and low level, we set $\tilde{\mu}_k=0$ (no service) at high levels, $\tilde{\mu}_k=\lambda_k$ (service rate equals arrival rate) at low levels. To decide the medium level, we choose $\tilde{\mu}_k=\lambda_k/2$ as the average of 0 and $\lambda_k$. 
\begin{align*}
    \tilde{\mu}_k &= \begin{cases}
        0, & \text{with probability 1/3}  \\
        \frac{1}{2}\lambda_k, & \text{with probability 1/3}  \\
        \lambda_k, & \text{with probability 1/3}.
    \end{cases}
\end{align*}
\end{itemize}
For the scaled process used in our simulation study, the reference policy $\tilde{\hat{\mu}}$ follows the same structure as $\tilde{\mu}_k$, with $\lambda_k$ and $b_k$ replaced by $\hat{\lambda}_k$ and $\hat{b}_k$, respectively. 

\paragraph{Behavior policy.} For the behavior policy, we consider the variance of the driftless Brownian motion as $4\sigma^2_k$ for $k=1,\ldots,12$. For the scaled process, we change the variance accordingly. 

\subsection{Shape Constraints}
\label{subapp:ShapeConstraint}

To improve the training process, we introduce penalty terms to enforce specific properties (i.e., shape constraints) on the neural network approximation of the gradient function $\hat{G}^{w_2}(z)$, specifically the boundary condition behavior. We discuss how we incorporate these penalty terms in this section. 

As shown in the HJB equation (\ref{eqn:HJB_Hat})-(\ref{eqn:HJB_Hat_Boundary3}), the neural network approximation $\hat{G}^{w_2}(\hat{z})$ is expected to satisfy the boundary conditions: (1) for $k=1,\ldots, K$, $\hat{G}^{w_2}(\hat{z})$ should be 0 when $\hat{z}_k=0$ and equal to $p$ when $\hat{z}_k=\hat{b}_k$; (2) for $k=K+1,\ldots,2K$, $\hat{G}^{w_2}(\hat{z})$ should approach $h_2/\gamma_k$ as $\hat{z}_k$ approaches infinity. To enforce these behaviors, we introduce penalty terms for both the left and right endpoint constraints.

\paragraph{Left endpoint constraint.} The left endpoint constraint ensures conditions under Equation (\ref{eqn:HJB_Hat_Boundary1}). We choose a small value $\epsilon>0$ and define the following penalty term:
\begin{align*}
    \E \left[\left\{ \sum_{k=1, |\hat{z}_k|\leq \epsilon}^{2K} \left| \hat{G}_k^{w_2}(\hat{z}) \right|  \right\}^2 \right].
\end{align*}
This term calculates the expected magnitude of violation from the boundary condition specified in Equation (\ref{eqn:HJB_Hat_Boundary1}). 

\paragraph{Right endpoint constraint.} To enforce the boundary properties described in Equations (\ref{eqn:HJB_Hat_Boundary2}) and  (\ref{eqn:HJB_Hat_Boundary3}), we incorporate a right endpoint constraint with penalty terms, defined differently depending on whether $1\leq k\leq K$ or $K+1\leq k\leq 2K$. 
\begin{itemize}
    \item For $1\leq k\leq K$: Using a similar approach as in the left endpoint constraint, we define the penalty term as
    \begin{align*}
        \E \left[\left\{\sum_{k=1, |\hat{z}_k-\hat{b}_k|\leq \epsilon}^{K} \left|\hat{G}_k^{w_2} (\hat{z})-p \right| \right\}^2 \right].
    \end{align*}
    This term calculates the expected deviation from the boundary condition in Equation (\ref{eqn:HJB_Hat_Boundary2}).
    \item For $K+1\leq k\leq 2K$: Let $z_k^\infty$ be a significantly large value (we discuss specific values used in simulations later). For each $k=1,\ldots,2K$, we construct a modified input $\dot{z}^{(k)}$ by setting $\dot{z}_{k}^{(k)}=\hat{z}_{k}^\infty/\kappa$ and keeping all other components the same as $\hat{z}$ (i.e., \(\dot{z}_{k'} = \hat{z}_{k'}\) for \(k' \neq k\)). This modification replaces the $k$th entry of $\hat{z}$ to approach infinity, while preserving the values of other entries.
    The penalty term is defined as 
    \begin{align*}
        \E \left[ \left\{ \sum_{k=K+1}^{2K} \left|\hat{G}_k^{w_2} (\dot{z}^{(k)})-\frac{c_2}{\gamma_k} \right| \right\}^2 \right]. 
    \end{align*}
    This penalty term calculates the expected violation to the boundary condition of Equation (\ref{eqn:HJB_Hat_Boundary3}).
\end{itemize}

Combining these penalty terms with the loss function, we modify the loss function as 
\begin{align*}
	\hat{\ell} (w_1, w_2) & = \widehat{\Var} \Bigg(\hat{V}^{w_1} (\tilde{\hat{Z}}^{(i)} (T))- \hat{V}^{w_1} (\tilde{\hat{Z}}^{(i)}(0)) - \sum_{j=0}^{N-1} \hat{G}^{w_2} (\tilde{\hat{Z}}^{(i)} (j\Delta t)) \cdot\delta_j^{(i)} +p\sum_{k=1}^K \tilde{\hat{U}}^{(i)}_k(T) \\
    &\hspace{1cm}\quad +\sum_{j=0}^{N-1} \tilde{\hat{\mu}}\cdot \hat{G}^{w_2}(\tilde{\hat{Z}}^{(i)}(j \Delta t)) -\sum_{j=0}^{N-1}  \hat{F}\left(\tilde{\hat{Z}}^{(i)}(j \Delta t), \hat{G}^{w_2} (\tilde{\hat{Z}}^{(i)}(j\Delta t)) \right) \Delta t \Bigg)+{\rm penalty}. 
\end{align*}
where 
\begin{align*}
    \text{penalty} &= \Lambda_1\cdot \E \left[\left\{ \sum_{k=1, |z_k|\leq \epsilon}^{2K} \left| \hat{G}_k^{w_2}(z) \right|  \right\}^2 \right]+\Lambda_2\cdot \E \left[\left\{\sum_{k=1, |\hat{z}_k-\hat{b}_k|\leq \epsilon}^{K} \left|\hat{G}_k^{w_2} (\hat{z})-p \right| \right\}^2 \right] \\
    &\quad + \Lambda_3\cdot \E \left[ \left\{ \sum_{k=K+1}^{2K} \left|\hat{G}_k^{w_2} (\hat{z}^{(k)})-\frac{c_2}{\gamma_k} \right| \right\}^2 \right],
\end{align*}
with $\Lambda_1, \Lambda_2, \Lambda_3$ as tunable constants associated with each penalty term, selected through tuning for optimal performance.

\paragraph{Implementation details of the shape constraints.} 

In our implementation, we set $\epsilon=0.001$.

To determine appropriate values for $z_k^\infty$ for $k=K+1,\ldots,2K$, we select constants large enough to effectively simulate ``infinity''. Specifically, we multiply the maximum values $n_k^{\rm max}$ (shown in Table \ref{tab:MaxNumPendingByClass}) by a factor of 2. The resulting values of $z_k^\infty$ used in our simulations are shown in Table \ref{tab:ZInfinity}.
\begin{table}[h!]
	\centering
	\begin{tabular}{ c|c||c|c }
		\toprule
        $k$ & $z_k^\infty$ & $k$ & $z_k^\infty$ \\  \midrule
        13 & 599 & 19 & 150 \\
        14 & 253 & 20 & 238 \\
        15 & 204 & 21 & 207 \\
        16 & 323 & 22 & 609 \\
        17 & 349 & 23 & 178 \\
        18 & 191 & 24 & 522 \\
		\bottomrule
	\end{tabular}
	\caption{Chosen values of $z_k^\infty$ for $k=K+1,\ldots,2K$ through the training.}
	\label{tab:ZInfinity}
\end{table}

We set $\Lambda_1=1, \Lambda_2=10$, and $\Lambda_3=h$ (i.e., the ratio of $c_2$ and $c_1$). In general, we observe that the penalty term associated with $\Lambda_3$ tends to increase as $c_2$ increases. To account for this, we enlarge the value of $\Lambda_3$ for larger $c_2$.

\subsection{Neural Network Architecture}
\label{subapp:NNArchitecture}

Table \ref{tab:CurrentNeuralNetworkConfiguration} shows the neural network architecture that we use to approximate $\hat{V}^{w_1}(\cdot)$ and $\hat{G}^{w_2}(\cdot)$
\begin{table}[h!]
	\centering
	\begin{tabular}{ c|c }
		\toprule
		Hyper-parameters & Value \\   \midrule
		Number of hidden layers & 3  \\  \hline
		Number of neurons per layer & 100  \\  \hline
        $T$ & 220  \\   \hline
		$\Delta t$ & 1  \\   \hline
		Batch size & 256   \\   \hline 
		Number of iterations & 10,000  \\  \hline
		Learning rate & 0.005, up to the first 3000th iteration  \\
		& 0.001, from 3001th iteration to the 5000th iteration \\
        & 0.0001, after the 5000th iteration  \\
 		\bottomrule
	\end{tabular}
	\caption{Neural network configuration for approximating $\hat{V}^{w_1}(\cdot)$ and $\hat{G}^{w_2}(\cdot)$.}
	\label{tab:CurrentNeuralNetworkConfiguration}
\end{table}

\end{document}